\def\N{\mathbb{N}}
\def\Z{\mathbb{Z}}
\def\F{\mathbb{F}}
\newtheorem{theorem}{Theorem}[section]
\newtheorem{proposition}[theorem]{Proposition}
\newtheorem{corollary}[theorem]{Corollary}
\newtheorem{lemma}[theorem]{Lemma}
\newtheorem{definition}[theorem]{Definition}
\newtheorem{remark}[theorem]{Remark}
\newtheorem{examples}[theorem]{Examples}
\begin{document}
\title{All even (unitary) perfect polynomials over $\F_2$ with only Mersenne primes as odd divisors}
\author{Gallardo, Luis H., Rahavandrainy, Olivier,  \\
Univ Brest, UMR CNRS 6205, \\
Laboratoire de Math\'ematiques de Bretagne Atlantique\\
{\small{e-mail : Luis.Gallardo@univ-brest.fr
- Olivier.Rahavandrainy@univ-brest.fr}}}
\maketitle
\begin{itemize}
\item[a)]
Running head: Mersenne and perfect polynomials
\item[b)]
Keywords: Sum of divisors, polynomials, finite fields,
characteristic $2.$
\item[c)]
Mathematics Subject Classification (2010): 11T55, 11T06.
\item[d)]
Corresponding author: Luis H. Gallardo
\end{itemize}

\newpage~\\
{\bf{Abstract}}\\
We address an arithmetic problem in the ring $\F_2[x]$ related to the fixed points of the sum of divisors function. We study some binary polynomials $A$ such that $\sigma(A)/A $ is still a binary polynomial. Technically, we prove that
the only (unitary) perfect polynomials over $\F_2$  that are products of $x$, $x+1$ and of Mersenne primes are precisely the nine (resp. nine ``classes'') known ones. This follows from a new result about the factorization of $M^{2h+1} +1$, for a Mersenne prime $M$ and for a positive integer $h$.

{\section{Introduction}}
Let $A \in \F_2[x]$ be a nonzero polynomial. We say that $A$ is \emph{even} if it has a linear factor and that it is
\emph{odd} otherwise. We define a
\emph{Mersenne prime} (polynomial) over $\F_2$ as an irreducible polynomial of the form
$1+x^a(x+1)^b$, for some positive integers $a,b$. The name come as an analogue of the integral Mersenne primes, taking $x^a(x+1)^b$ as an analogue of the prime power $2^{a+b}$.
As over the integers, we say that a divisor $d$ of $A$ is \emph{unitary} if $\displaystyle{\gcd(d,A/d) = 1}$.
Let $\omega(A)$ denote the number of distinct irreducible (or
\emph{prime}) factors of $A$ over $\F_2$ and let $\sigma(A)$ (resp. $\sigma^*(A)$) denote the sum of all (unitary) divisors of $A$ (including $1$ and $A$). Both $\sigma$ and $\sigma^*$ are multiplicative functions. If $\sigma(A) = A$ (resp. $\sigma^*(A)=A$), then we
say that $A$ is (\emph{unitary}) \emph{perfect}. Finally, we say that a (unitary) perfect polynomial is \emph{indecomposable} if it is not a product of two coprime nonconstant (unitary) perfect polynomials.

We can also consider a perfect polynomial ($A \in \F_2[x]$ such that $A$ divides $\sigma(A)$) as an analogue of a multiperfect number (a positive integer which divides the sum of all its divisor). 
It might have some interest to observe that most known multiperfect numbers (see OEIS sequence A007691) appear to be divisible by a Fermat prime or by a Mersenne prime.

The notion of (unitary) perfect polynomials is introduced in \cite{Canaday}
by E. F. Canaday in $1941$ and extended by J. T. B. Beard
Jr. et al.
in several directions (\cite{Beard2}, \cite{BeardU}, \cite{Beard}). Later research in the subject
by Gallardo and Rahavandrainy (\cite{Gall-Rahav5}, \cite{Gall-Rahav8})
allows us to be able to better describe the properties of such polynomials.

The known perfect polynomials are the following:\\
- the so-called ``trivial'' ones, of the form
$(x^2+x)^{2^n-1}$, for some positive integer $n$,\\
- nine others which are already characterized (\cite{Gall-Rahav12}, Theorem 1.1),\\
- and the last two which are divisible by a non-Mersenne prime.\\

Any unitary perfect polynomial is even (Lemma \ref{uperfectiseven}). The known ones, which are only divisible by Mersenne primes (as odd factors), belong to nine equivalence classes (see Lemma \ref{equivalenceclass}). There are several (perhaps, infinitely many) of such classes (see \cite{BeardU} and \cite{Rahav}).\\

The paper consist of two major results stated in Theorem \ref{result3} and Theorem \ref{result0}.
Theorem \ref{result3} significantly improves on the results of \cite[Theorems 1.1 and 1.3]{Gall-Rahav12}, because (in our main result in this paper) there are no more conditions asked on the powers of the $M_j$.
The proof of Theorem \ref{result3} is obtained from new results given in Theorem \ref{result0}, which in turn, extends  recent results in \cite[Theorem 1.4]{Gall-Rahav-mersenn}.\\

It is convenient to fix some notations.\\
\\
{\bf{Notations}}\\
$\bullet$ The set of integers (resp. of nonnegative
integers, of positive integers) is denoted by $\Z$ (resp. $\N$, $\N\sp{*}$).\\
$\bullet$ For $S, T \in \F_2[x]$ and for $m \in \N^*$, $S^m \mid T$ (resp. $S^m \| T$) means that $S$ divides $T$ (resp. $S^m \mid T$ but $S^{m+1} \nmid T$). We also denote by $\overline{S}$ the polynomial defined as $\overline{S}(x) = S(x+1)$ and by
$val_x(S)$ $($resp. $val_{x+1}(S))$ the valuation of $S$, at $x$ $($resp. at $x+1)$.\\
$\bullet$ We put
$$\begin{array}{l}
M_j= 1+x(x+1)^j, j \in \{1,2,3\},\\
T_1 =x^2(x+1)M_1, T_2=\overline{T_1}, \\
T_3 = x^4(x+1)^3M_3, T_4 =\overline{T_3},\
T_5 = x^4(x+1)^4M_3\overline{M_3} = \overline{T_5},\\
T_6 = x^6(x+1)^3M_2\overline{M_2}, T_7= \overline{T_6},\\
T_8 = x^4(x+1)^6M_2\overline{M_2} M_3 \text{ and } T_9 = \overline{T_8},\\
B_1 = x^3(x+1)^3M_1^2, \ B_2 = x^3(x+1)^2M_1, \ B_3 = x^5(x+1)^4M_3, \\
B_4 = x^7(x+1)^4M_2 \overline{M_2}, \ B_5 = x^5(x+1)^6M_1^2M_3, \ B_6 = x^5(x+1)^5M_3 \overline{M_3}, \\
B_7 = x^7(x+1)^7M_2^2 {\overline{M_2}}^2, \ B_8 = x^7(x+1)^6M_1^2 M_2 \overline{M_2}, \ B_9 = x^7(x+1)^5M_2 \overline{M_2} \ \overline{M_3},\\
\text{${\mathcal{M}}= \{M_1, M_2,\overline{M_2}, M_3, \overline{M_3} \}$, ${\mathcal{P}}= \{T_1,\ldots, T_9\}$ and ${\mathcal{P}}_u= \{B_1,\ldots, B_9\}$.}
\end{array}$$
$\bullet$ Finally, we denote by $\Delta$, the set of primes $p$ such that $p$ is a Mersenne prime or the order of $2$ in $\F_p \setminus\{0\}$, $ord_p(2)$, is divisible by~$8$. In particular,  $\Delta$ contains all Fermat primes greater than $5$.\\

Throughout this paper, we {\bf{always}} suppose that any (unitary) perfect polynomial is {\bf{indecomposable}}. We have often used Maple software for computations.
Our main results are the following.\\
\begin{theorem} \label{result3}
Let $\displaystyle{A = x^a(x+1)^b \prod_{i\in I} P_i^{h_i} \in \F_2[x]}$ be such that each $P_i$ is a Mersenne prime and $a,b,h_i \in \N^{*}$. Then
$A$ is perfect $($resp.  unitary perfect$)$ if and only if $A \in {\mathcal{P}}$ $($resp. $A = B^{2^n}$ for some $n \in \N$ and $B \in {\mathcal{P}}_u)$.
\end{theorem}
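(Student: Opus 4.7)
The strategy is to exploit the multiplicativity of $\sigma$ (resp.\ $\sigma^*$), reduce the classification to the analysis of $\sigma(P^h)$ (resp.\ $\sigma^*(P^h) = 1 + P^h$) for a Mersenne prime $P$, and then invoke Theorem \ref{result0} to force either a ``forbidden'' irreducible factor or confinement to a small explicit list of base cases amenable to a finite search.

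For the perfect case, starting from $\sigma(A) = A$ with $A = x^a(x+1)^b \prod_{i \in I} P_i^{h_i}$, I would expand
\[
x^a(x+1)^b \prod_i P_i^{h_i} \;=\; \sigma(x^a)\,\sigma((x+1)^b) \prod_i \sigma(P_i^{h_i}).
\]
Since the left side has only $x$, $x+1$, and the $P_i$ as irreducible factors, every irreducible factor of each $\sigma(P_i^{h_i})$ must lie in $\{x, x+1\} \cup \{P_j : j \in I\}$. In characteristic~$2$ one has $\sigma(P^h) = (P^{h+1}+1)/(P+1)$, with $P+1 = x^{a_i}(x+1)^{b_i}$. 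I would split on the parity of $h_i$: if $h_i$ is odd then $P^{h_i+1}+1 = (P^{(h_i+1)/2}+1)^2$ is a square, which is very restrictive; if $h_i$ is even then $h_i+1$ is odd and Theorem \ref{result0} applies directly to $P^{h_i+1}+1$, producing an irreducible factor outside $\{x, x+1\} \cup \mathcal{M}$ except in a short list of exceptions with $P_i \in \mathcal{M}$ and $h_i$ small. Combined with the constraint from multiplicativity this eliminates all $P_i \notin \mathcal{M}$ and bounds each $h_i$.

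Once confined to the Mersenne bases $\mathcal{M}$ and to bounded exponents, the problem collapses to a finite enumeration: classical bounds on $a,b$ derived from the factorizations of $\sigma(x^a)$ and $\sigma((x+1)^b)$ (as in \cite{Canaday}, \cite{Gall-Rahav5}) together with the short list of admissible $(P_i, h_i)$ leave only finitely many candidates, which a Maple search checks against $\sigma(A) = A$ to recover exactly $\mathcal{P}$. For the unitary case I would first note that $\sigma^*(C^2) = \sigma^*(C)^2$ in $\F_2[x]$ (because $1+Q^{2e} = (1+Q^e)^2$), so any unitary perfect $A$ is of the form $B^{2^n}$ with $B$ ``primitive'' (not a square), and it suffices to classify such $B$. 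The analogous analysis, using $\sigma^*(P^h) = 1 + P^h$ in place of $\sigma(P^h)$ and applying Theorem \ref{result0} to $P^h+1$ when $h$ is odd, again reduces the problem to a finite search whose solutions are $\mathcal{P}_u$.

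The main obstacle is the effective application of Theorem \ref{result0}: for every Mersenne prime $M$ outside $\mathcal{M}$ and for every $h$ outside a small exceptional set, the factorization of $M^{2h+1}+1$ must exhibit an irreducible factor that is either non-Mersenne or is a Mersenne prime not among the $P_j$'s. This is precisely where the hypothesis on $\Delta$ (Mersenne primes together with primes $p$ such that $8 \mid ord_p(2)$) intervenes; without such a uniform ``forbidden factor'' statement the infinite family of candidate Mersenne divisors cannot be pruned and the finite enumeration in the last step becomes impossible. The converse implications --- that every $T_j$ is perfect and every $B_j^{2^n}$ is unitary perfect --- are routine direct verifications.
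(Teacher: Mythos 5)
Your overall skeleton (multiplicativity, reduction to the factors of $\sigma(P_i^{h_i})$, input from Theorem \ref{result0}) matches the paper's, but there are three concrete gaps. The most serious: you cannot eliminate the primes $P_i \notin \mathcal{M}$ the way you propose. Theorem \ref{result0}(ii) only applies when $2h_i+1$ has a prime factor in $\Delta \setminus \{7\}$, so for $M \notin \mathcal{M}$ and, say, $2h_i+1 = 7$ or $2h_i+1=23$ it yields no forbidden factor, and the ``uniform forbidden factor statement'' your last paragraph asks for simply is not available. The paper sidesteps this entirely: writing $A = A_1 A_2$ with $A_2 = \prod_{P_j \notin \mathcal{M}} P_j^{h_j}$, it shows (Lemmas \ref{divisorsigmA1} and \ref{gcdMjsigmA1}) that every Mersenne divisor of $\sigma(A_1)$ lies in $\{M_1, M_2, \overline{M_2}, M_3, \overline{M_3}\}$, hence $\gcd(A_2,\sigma(A_1))=1$, hence $A_2 \mid \sigma(A_2)$, hence $A_2$ is itself perfect and must equal $1$ by the standing indecomposability assumption. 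Theorem \ref{result0} is then only needed for $M \in \mathcal{M}$, i.e.\ part (i).

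Two further gaps. Your parity split is incomplete: for $h_i$ odd the relevant decomposition is $h_i = 2^{n_i}u_i - 1$ with $u_i$ odd, giving $\sigma(P_i^{h_i}) = (1+P_i)^{2^{n_i}-1}\,\sigma(P_i^{u_i-1})^{2^{n_i}}$, which sends $u_i \geq 3$ back to the even case; ``the square condition is very restrictive'' is not an argument, and the residual sub-case $P_i \in \{M_2, \overline{M_2}\}$, where $\sigma(M_2^{2}) = M_1\overline{M_3}$ is a product of Mersenne primes, requires the explicit exponent-matching case analysis of Proposition \ref{caseperfect} and the non-perfect witnesses $S_1$, $S_2$ of Examples \ref{upexamples}. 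Finally, the surviving candidates are not finite in number: the admissible exponents are $h_i = 2^{n_i}-1$ (resp.\ $2^{n_i}$) and $a,b$ of the forms $3\cdot 2^n-1$, $5\cdot 2^n-1$, $7\cdot 2^n-1$ or $2^n-1$ with $n, n_i$ unbounded, so the problem does not ``collapse to a finite enumeration'' checkable in Maple. The paper closes this last step by invoking the earlier classification for exactly these exponent shapes (Lemmas \ref{oldresult1} and \ref{oldresult2}, i.e.\ \cite[Theorems 1.1 and 1.3]{Gall-Rahav12}), which is a substantive prior result, not a finite search.
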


\begin{theorem} \label{result0}
Let $h \in \N^*$ and let $M \in \F_2[x]$ be a Mersenne prime. Then in the following cases, $\sigma(M^{2h})$ is divisible by a non-Mersenne prime:\\
(i) $(M \in \{M_1, M_3, \overline{M_3} \})$ or $(M \in \{M_2, \overline{M_2}\}$ and $h \geq 2)$.\\
(ii) $M \not\in {\mathcal{M}}$ and $2h+1$ is divisible by a prime number $p \in \Delta \setminus \{ 7 \}$.
\end{theorem}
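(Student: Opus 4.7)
The plan starts from the cyclotomic factorization
$$\sigma(M^{2h}) \;=\; \frac{M^{2h+1}+1}{M+1} \;=\; \prod_{\substack{d \,\mid\, 2h+1 \\ d>1}} \Phi_d(M)$$
in $\F_2[x]$, so that for every prime $p$ dividing $2h+1$ the cyclotomic factor $\Phi_p(M) = \sigma(M^{p-1})$ divides $\sigma(M^{2h})$. This reduction is the engine driving both cases, turning the question into one about individual $\Phi_p(M)$.

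For case (ii), I choose the prime $p \in \Delta \setminus \{7\}$ dividing $2h+1$; it suffices to exhibit a non-Mersenne irreducible factor of $\sigma(M^{p-1})$. I would argue by contradiction: assume every irreducible divisor $R$ of $\sigma(M^{p-1})$ is Mersenne, written $R = 1 + x^{\alpha}(x+1)^{\beta}$. Since the image of $M$ in $\F_2[x]/(R)$ has multiplicative order exactly $p$, one gets $ord_p(2) \mid \alpha + \beta$. The hypothesis $p \in \Delta \setminus \{7\}$ enters precisely here: either $p$ is an integer Mersenne prime $2^q - 1$ with $q$ prime and $q \neq 3$ (forcing $\alpha + \beta \geq q$), or $8 \mid ord_p(2)$ (forcing $\alpha + \beta \geq 8$). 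I would then combine these degree lower bounds with the total-degree identity $(p-1)(a+b) = \sum_R (\alpha_R + \beta_R)$, the valuation identities $val_x(\sigma(M^{p-1})) = \sum_R \alpha_R$ and $val_{x+1}(\sigma(M^{p-1})) = \sum_R \beta_R$, and an explicit low-order expansion of $\sigma(M^{p-1})$ modulo small powers of $x$ and $(x+1)$ derived from $M = 1 + x^a(x+1)^b$. The assumption $M \not\in \mathcal{M}$, which excludes the five bidegrees realized by the exceptional Mersenne primes, should then deliver the contradiction.

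For case (i), the involution $S \leftrightarrow \overline S$ reduces matters to $M \in \{M_1, M_2, M_3\}$. For the minimal allowed $h$ in each subcase ($h = 1$ for $M_1, M_3$ and $h = 2$ for $M_2$), I would factor $\sigma(M^{2h})$ explicitly and point to a non-Mersenne irreducible divisor; for instance $\sigma(M_1^2) = 1 + x + x^4$, which is the unique degree-$4$ irreducible over $\F_2$ not of Mersenne shape. For the remaining $h$, I pick any prime $p \mid 2h+1$ and use $\Phi_p(M) \mid \sigma(M^{2h})$: if $p$ is small enough that $\Phi_p(M)$ was handled in a base case, the non-Mersenne factor propagates; otherwise $p \in \Delta \setminus \{7\}$ and the argument of case (ii), tailored to the small bidegree $(a, b)$ of the specific $M$, applies.

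The delicate step is the valuation-versus-degree contradiction closing case (ii). The exclusion of $p = 7$ is sharp, since $ord_7(2) = 3$ matches the degrees of $M_2, \overline{M_2} \in \mathcal{M}$, so a hypothetical Mersenne-only factorization is not precluded there by order considerations alone. Producing a uniform contradiction across both branches of the definition of $\Delta$ and for every $M \not\in \mathcal{M}$ is the technical heart of the proof, and will require a careful interplay between $ord_p(2)$ and the bidegree $(a, b)$ of $M$, supplementing the basic divisibility counting above.
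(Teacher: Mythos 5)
Your skeleton (reduce to a prime $p\mid 2h+1$ via $\sigma(M^{p-1})\mid\sigma(M^{2h})$, assume all irreducible divisors are Mersenne, and use that $ord_p(2)$ divides the degree of each such divisor) matches the paper's setup, but the steps that are supposed to close the argument are either not the right tool or simply absent. In case (ii), for the branch $8\mid ord_p(2)$ your plan is a degree-sum/valuation bookkeeping contradiction; what actually finishes this branch is the fact that \emph{no Mersenne prime has degree divisible by $8$} (because $\omega(1+x^{c_1}(x+1)^{c_2})$ is even when $c_1+c_2\equiv 0\bmod 8$, so such a polynomial is reducible), after which $ord_p(2)\mid\deg(P_j)$ is an immediate contradiction -- your bound $\alpha+\beta\ge 8$ alone contradicts nothing. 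For $p=2^q-1$ a Mersenne prime with $q\ge 5$, the working argument is the opposite of bookkeeping: by primitivity \emph{every} irreducible of degree $q$ other than $M$ divides $M^p+1$, and a count ($N_2(q)>\varphi(q)$ for $q\ge 4$) shows some such irreducible is not Mersenne. Most seriously, $p=3\in\Delta\setminus\{7\}$, and there your degree bound is vacuous ($q=2$); this subcase needs the $\omega(\sigma(M^2))=2$ result quoted from earlier work plus a genuine new argument for $\omega\ge 3$, namely that $U_2=\sigma(\sigma(M^2))$ would have to equal $x^u(x+1)^v$ with $u,v$ even (hence be a square) while a coefficient computation forces $\alpha_3(U_2)=1$. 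That square-versus-odd-coefficient device, applied to $U_{2h}=\sigma(\sigma(M^{2h}))$, is the paper's central engine and is missing from your proposal.

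Case (i) has a structural gap: your propagation step ``pick any prime $p\mid 2h+1$; either it was a base case or $p\in\Delta\setminus\{7\}$'' is a false dichotomy. For $M=M_3$ and $2h+1=11$ (or $13$, $23$, \dots) the only prime divisor is neither $3$ nor $5$ nor in $\Delta$, so neither branch applies and you have no argument. The theorem claims the result for \emph{all} $h$, and the paper handles arbitrary $h$ for $M\in\{M_3,\overline{M_3}\}$ and for $M\in\{M_2,\overline{M_2}\}$, $h\ge 4$, by the same $U_{2h}$-square contradiction (classifying the possible small-degree Mersenne divisors of $\sigma(M^{2h})$ and then computing $\alpha_3(U_{2h})$ or $\alpha_5(U_{2h})$ to be $1$), with the exceptional exponents $2h+1\in\{5,7,9\}$ disposed of by explicit factorization. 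Until you supply an argument valid for every $h$, not just those whose odd part has a prime divisor you can reach, case (i) is not proved.
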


\section{Proof of Theorem \ref{result3}} \label{proof3}
Sufficiencies are obtained by direct computations. For the necessities, we shall apply Lemmas \ref{oldresult1} and \ref{oldresult2}, Propositions \ref{caseperfect} and \ref{caseunitperfect}. We fix:
$$\text{$\displaystyle{A= x^a(x+1)^b \prod_{i \in I}  P_i^{h_i}= A_1 A_2}$, where $a,b, h_i \in \N$, $P_i$ is a Mersenne prime,}$$ $\text{$\displaystyle{A_1 = x^a(x+1)^b \prod_{P_i \in {\mathcal{M}}} P_i^{h_i}}$  and $\displaystyle{A_2 = \prod_{P_j \not\in {\mathcal{M}}} P_j^{h_j}}$.}$

\begin{lemma} \label{sigmaPjhj}
If $A$ is perfect $($resp. unitary perfect$)$, then $\sigma(x^a)$, $\sigma((x+1)^b)$ and $\sigma({P_i}^{h_i})$, for any $i \in I$ $($resp. $\sigma^*(x^a)$, $\sigma^*((x+1)^b)$, $\sigma^*({P_i}^{h_i}))$ are only divisible by $x$, $x+1$ or by Mersenne primes.
\end{lemma}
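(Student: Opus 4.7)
The proof plan is essentially a one-step application of multiplicativity, so I will spell out the structure rather than search for clever ideas.

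The approach is the following. Since the polynomials $x$, $x+1$ and the distinct Mersenne primes $P_i$ are pairwise coprime irreducibles, so are the prime powers $x^a$, $(x+1)^b$ and the $P_i^{h_i}$. The function $\sigma$ (resp. $\sigma^*$) is multiplicative on coprime inputs, so that
\[
\sigma(A) \;=\; \sigma(x^a)\,\sigma((x+1)^b)\,\prod_{i\in I}\sigma(P_i^{h_i}),
\]
and analogously for $\sigma^*$. The hypothesis that $A$ is perfect (resp. unitary perfect) means exactly that this product equals $A$.

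Now $A = x^a(x+1)^b\prod_{i\in I}P_i^{h_i}$ has as its full list of distinct irreducible factors precisely $\{x,\, x+1\} \cup \{P_i : i \in I\}$, and every $P_i$ is a Mersenne prime by assumption. Unique factorization in $\F_2[x]$ together with the identity above then forces every irreducible factor of each of the polynomials $\sigma(x^a)$, $\sigma((x+1)^b)$, $\sigma(P_i^{h_i})$ (and similarly $\sigma^*(x^a)$, $\sigma^*((x+1)^b)$, $\sigma^*(P_i^{h_i})$) to be an element of $\{x,\,x+1\}\cup\{P_j : j \in I\}$. Since each such $P_j$ is a Mersenne prime, the conclusion of the lemma follows immediately.

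There is no genuine obstacle here: the only thing to verify is multiplicativity (which is stated in the Introduction for both $\sigma$ and $\sigma^*$) and coprimality of the three types of factors $x^a$, $(x+1)^b$, $P_i^{h_i}$ (which is immediate since $x$, $x+1$ and the $P_i$ are distinct monic irreducibles of $\F_2[x]$). The statement is therefore a direct structural consequence of the definition of (unitary) perfect polynomial and will be invoked in the sequel to restrict the possible prime factors appearing in each local $\sigma$- or $\sigma^*$-factor of $A$.
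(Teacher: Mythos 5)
Your proof is correct and follows exactly the paper's own argument: decompose $\sigma(A)$ (resp.\ $\sigma^*(A)$) by multiplicativity over the coprime prime powers, then observe that each factor divides $\sigma(A)=A$ (resp.\ $\sigma^*(A)=A$), whose only irreducible divisors are $x$, $x+1$ and the Mersenne primes $P_i$. No gap; nothing further is needed.
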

\begin{proof}
Since $\sigma$ and $\sigma^*$ are multiplicative, $\displaystyle{\sigma(A) = \sigma(x^a) \sigma((x+1)^b)  \prod_{i \in I} \sigma({P_i}^{h_i})}$ (resp. $\displaystyle{\sigma^*(A) = \sigma^*(x^a) \sigma^*((x+1)^b) \prod_{i \in I} \sigma^*({P_i}^{h_i})}$). Any divisor of $\sigma(x^a)$, $\sigma((x+1)^b)$ and $\sigma({P_i}^{h_i})$ (resp. of $\sigma^*(x^a)$, $\sigma^*((x+1)^b)$, $\sigma^*({P_i}^{h_i})$) divides $\sigma(A) = A$ (resp.  $\sigma^*(A) = A$).
\end{proof}
\begin{lemma} [Lemma 2 in \cite{Beard}]  \label{upcriteres}
A polynomial $S$ is $($unitary$)$ perfect if and only if for any irreducible polynomial $P$ and for any $m_1, m_2 \in \N^*$, we have
$(P^{m_1} \| S, P^{m_2} \| \sigma(S)) \Rightarrow m_1 = m_2$ $($resp. $(P^{m_1} \| S, P^{m_2} \| \sigma^*(S)) \Rightarrow m_1 = m_2)$.
\end{lemma}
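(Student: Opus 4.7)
The lemma is a local-to-global valuation criterion, and the plan is to unwind it directly from unique factorization in $\F_2[x]$ together with the multiplicativity of $\sigma$ and $\sigma^*$.

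The forward direction is immediate: if $\sigma(S) = S$ (resp.\ $\sigma^*(S) = S$), then the monic prime factorizations on the two sides coincide, so for every irreducible $P$ the $P$-adic valuations of $S$ and $\sigma(S)$ (resp.\ $\sigma^*(S)$) are equal, forcing $m_1 = m_2$ whenever both belong to $\N^*$ and realize an exact division.

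For the converse, the key auxiliary fact is that $S$ and $\sigma(S)$ are monic polynomials of the same degree. Writing $S = \prod_i P_i^{a_i}$ with the $P_i$ distinct monic irreducibles, multiplicativity gives $\sigma(S) = \prod_i (1 + P_i + \cdots + P_i^{a_i})$, and each factor is monic of degree $a_i \deg P_i$; the analogous identity $\sigma^*(P^a) = 1 + P^a$ handles the unitary case. With degree matching and monicity in hand, the hypothesis that multiplicities agree at every prime forces the monic prime factorizations of $S$ and $\sigma(S)$ (resp.\ $\sigma^*(S)$) to coincide, since any prime occurring in exactly one of the two factorizations would break the degree equality. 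Hence $S = \sigma(S)$ (resp.\ $S = \sigma^*(S)$), and $S$ is (unitary) perfect.

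The argument is purely formal bookkeeping via unique factorization and degree counting, so there is no substantive obstacle; the only point requiring a moment's care is to confirm that the valuation-matching hypothesis, stated for primes dividing both $S$ and $\sigma(S)$ with positive multiplicity, nevertheless constrains the \emph{sets} of primes to agree, which is precisely what the degree computation above supplies.
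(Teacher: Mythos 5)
The paper offers no proof of this lemma --- it is quoted verbatim from Beard et al.\ --- so the only question is whether your argument stands on its own, and the converse direction does not. The step ``any prime occurring in exactly one of the two factorizations would break the degree equality'' is false. If some prime $P$ divides $S$ but not $\sigma(S)$ while another prime $Q$ divides $\sigma(S)$ but not $S$, the stated hypothesis is vacuous at both $P$ and $Q$ (no pair $m_1,m_2\in\N^*$ realizes the antecedent), and the degree deficit on one side can be exactly compensated on the other. Concretely, $S=x^2$ satisfies the displayed condition vacuously: $\sigma(x^2)=1+x+x^2$ is irreducible and coprime to $x$, and $\deg\sigma(x^2)=\deg x^2$, yet $x^2$ is not perfect. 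So monicity plus equality of degrees cannot by itself force the two factorizations to have the same support; this is precisely the point you flag as ``requiring a moment's care'' and then do not actually supply.

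The repair is to read the criterion one-sidedly, as in Beard's original Lemma 2: for every irreducible $P$ and every $m_1$ with $P^{m_1}\| S$, one must have $P^{m_1}\|\sigma(S)$ --- in particular every prime of $S$ divides $\sigma(S)$. That hypothesis yields $S\mid\sigma(S)$, and then your (correct) observations that $\sigma(S)$ and $\sigma^*(S)$ are monic of the same degree as $S$ immediately give $S=\sigma(S)$ (resp.\ $S=\sigma^*(S)$). Your forward direction and your degree and monicity computations are fine; the missing ingredient is the divisibility $S\mid\sigma(S)$, which the literal symmetric reading of the hypothesis does not provide.
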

\begin{examples}  [useful for Propositions \ref{caseperfect} and \ref{caseunitperfect}] \label{upexamples}~\\
(i) The polynomial $S_1 = x^{13} (x+1)^2 M_1^3 {M_2}^2 {\overline{M_2}}^{\ 2} M_3 \overline{M_3}$ is not perfect because $x^{13} \| S_1$ and $x^7 \| \sigma(S_1)$.\\
(ii) The polynomial $S_2 = x^{14} (x+1)^7 {M_1}^2 {M_2}^3 {\overline{M_2}}^{\ 3} M_3 \overline{M_3}$ is not unitary perfect since $x^{14} \| S_2$ and $x^{10} \| \sigma^*(S_2)$.
\end{examples}
\subsection{Case of perfect polynomials} \label{case-perfect}

\begin{lemma} [Theorem 1.1 in \cite{Gall-Rahav12}] \label{oldresult1}
If $h_i = 2^{n_i}-1$ for any $i \in I$, then $A \in {\mathcal{P}}$.
\end{lemma}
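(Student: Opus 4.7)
My plan starts from the characteristic-$2$ identity
\[
\sigma(P^{2^n-1}) \;=\; (1+P)^{2^n-1},
\]
which follows from $(1+P)\,\sigma(P^{2^n-1}) = 1 + P^{2^n} = (1+P)^{2^n}$ in $\F_2[x]$. Applied to a Mersenne prime $P_i = 1 + x^{a_i}(x+1)^{b_i}$ with $h_i = 2^{n_i}-1$, this gives
\[
\sigma(P_i^{h_i}) \;=\; \bigl(x^{a_i}(x+1)^{b_i}\bigr)^{h_i} \;=\; x^{a_i h_i}(x+1)^{b_i h_i},
\]
a pure product of $x$ and $x+1$ that contains no Mersenne prime factor at all. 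By multiplicativity of $\sigma$, one then has
\[
\sigma(A) \;=\; \sigma(x^a)\,\sigma((x+1)^b)\,\prod_{i \in I}x^{a_i h_i}(x+1)^{b_i h_i},
\]
so the perfectness equation $A = \sigma(A)$ forces every Mersenne prime $P_i$ dividing $A$ to already divide $\sigma(x^a)\,\sigma((x+1)^b)$, with multiplicity at least $h_i$.

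Next I would analyze the Mersenne-prime factorization of $\sigma(x^a) = (x^{a+1}+1)/(x+1)$ and its symmetric partner $\sigma((x+1)^b)$. Writing $a+1 = 2^r u$ with $u$ odd, one obtains
\[
\sigma(x^a) \;=\; (x+1)^{2^r-1}\biggl(\frac{x^u+1}{x+1}\biggr)^{2^r},
\]
so all odd irreducible factors come from the divisors of $u$. This controls $|I|$ by the number of Mersenne prime factors of $\sigma(x^a)\sigma((x+1)^b)$, and also bounds each $h_i$ by the $P_i$-adic valuation of that polynomial, already making the problem finite for each fixed $(a,b)$.

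Then I would compare the valuations at $x$ and $x+1$ on both sides of $A = \sigma(A)$: the totals $\sum_i a_i h_i$ and $\sum_i b_i h_i$ coming from the Mersenne blocks, together with the contributions of $\sigma(x^a)$ and $\sigma((x+1)^b)$ at $x$ and $x+1$ respectively, must reproduce $a$ and $b$ exactly. Combined with the rigidity forced by $h_i = 2^{n_i}-1$ and by the explicit low-degree Mersenne primes $M_1, M_2, \overline{M_2}, M_3, \overline{M_3}$ listed in $\mathcal{M}$, this pins down a short list of candidates. Using Lemma \ref{upcriteres} (the per-prime exponent-matching test) and the $x \leftrightarrow x+1$ symmetry, each surviving candidate is then verified or discarded.

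The main obstacle will be the finite case analysis itself: one must both handle small $(a,b)$ explicitly (typically by hand or Maple, exploiting the $x \leftrightarrow x+1$ symmetry to halve the work) and also prove that $\sigma(x^a)\sigma((x+1)^b)$ inevitably acquires non-Mersenne irreducible factors once $a$ or $b$ exceeds a mild threshold --- any such factor would be forbidden by Lemma \ref{sigmaPjhj}. Carrying this out cleanly is what should leave only the nine known polynomials in $\mathcal{P}$.
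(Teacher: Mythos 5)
The paper does not actually prove this lemma: it is imported verbatim as Theorem 1.1 of \cite{Gall-Rahav12}, so the only fair comparison is with the strategy of that external proof. Your opening reduction is correct and is the right first move: since $\sigma(P_i^{2^{n_i}-1}) = (1+P_i)^{2^{n_i}-1} = x^{a_i h_i}(x+1)^{b_i h_i}$ contributes no odd factor, perfectness forces $P_i^{h_i} \,\|\, \sigma(x^a)\,\sigma((x+1)^b)$ for every $i$, and the identity $\sigma(x^a) = (x+1)^{2^r-1}\bigl(\sigma(x^{u-1})\bigr)^{2^r}$ with $a+1 = 2^r u$, $u$ odd, correctly reduces everything to the even-exponent sums $\sigma(x^{u-1})$.

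However, what you label ``the main obstacle'' --- showing that $\sigma(x^a)\,\sigma((x+1)^b)$ inevitably acquires a non-Mersenne irreducible factor once $a$ or $b$ exceeds a mild threshold --- is not a routine finiteness check but the entire content of the lemma, and you give no argument for it. The required input is Canaday's Theorem 8 (quoted in this paper as Lemma \ref{canadayperf}-(i)): $\sigma(x^{2h})$ is a product of Mersenne primes only for $2h \in \{2,4,6\}$, with the explicit values $M_1$, $M_3$, $M_2\overline{M_2}$. Without that classification your valuation bookkeeping has no finite list of pairs $(a,b)$ to run over, so the argument does not terminate; and even granting it, the concluding case analysis that whittles the candidates down to the nine polynomials of $\mathcal{P}$ is only gestured at. As a proof the proposal is therefore incomplete: the set-up is sound, but the decisive classification is assumed rather than established, and the final verification is not carried out.
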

We get from Theorem 8 in \cite{Canaday} and from Theorem \ref{result0}.
\begin{lemma} \label{canadayperf}
(i) If $h \in \N^*$ and if $\sigma(x^{2h})$ is only divisible by Mersenne primes, then $2h \in \{2,4,6\}$ and all its divisors lie in ${\mathcal{M}}$. More precisely, $\sigma(x^2) = M_1 = \sigma((x+1)^2),\ \sigma(x^4) = M_3,\  \sigma((x+1)^4) = \overline{M_3}$ and  $\sigma(x^6) = M_2 \overline{M_2} =\sigma((x+1)^6)$.\\
(ii) Let $M \in {\mathcal{M}}$ and $h \in \N^*$ be such that $\sigma(M^{2h})$ is only divisible by Mersenne primes, then $2h=2$, $M \in \{M_2, \overline{M_2}\}$ and $\sigma(M^2) \in \{M_1M_3, M_1\overline{M_3}\}$.
\end{lemma}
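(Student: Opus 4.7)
The plan is to combine the two cited results (Canaday's Theorem 8 and Theorem \ref{result0}) with explicit computations in $\F_2[x]$, exploiting the involution $x \mapsto x+1$ to halve the bookkeeping.

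For part (i), I will apply Canaday's Theorem 8 to $\sigma(x^{2h}) = 1 + x + \cdots + x^{2h}$. Since this polynomial is assumed to have only Mersenne prime divisors, Canaday's classification forces $2h \in \{2,4,6\}$. I then verify the stated identities by direct computation: $\sigma(x^2) = x^2 + x + 1 = M_1$; $\sigma(x^4) = x^4 + x^3 + x^2 + x + 1$, which equals $M_3 = 1 + x(x+1)^3$ after expansion in characteristic $2$; and $\sigma(x^6) = x^6 + x^5 + x^4 + x^3 + x^2 + x + 1$, which factors as $(x^3 + x + 1)(x^3 + x^2 + 1) = M_2 \overline{M_2}$. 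The corresponding identities for $(x+1)^{2h}$ follow by applying the involution $x \mapsto x+1$ to each factorization, using that $\overline{M_1} = M_1$ while $M_3$ swaps with $\overline{M_3}$ and $M_2$ with $\overline{M_2}$.

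For part (ii), I will use Theorem \ref{result0}(i) as an exclusion principle. That theorem already produces, in each listed case, a non-Mersenne prime divisor of $\sigma(M^{2h})$, contradicting the hypothesis. This rules out $M \in \{M_1, M_3, \overline{M_3}\}$ for every $h \geq 1$ and rules out $M \in \{M_2, \overline{M_2}\}$ for every $h \geq 2$. The only surviving case is $2h = 2$ with $M \in \{M_2, \overline{M_2}\}$. For $M = M_2$ I compute
\[
\sigma(M_2^2) \;=\; 1 + M_2 + M_2^2 \;=\; 1 + (x^3 + x + 1) + (x^6 + x^2 + 1) \;=\; x^6 + x^3 + x^2 + x + 1,
\]
and check by multiplication that this equals $(x^2 + x + 1)(x^4 + x^3 + 1) = M_1 \overline{M_3}$. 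Applying the involution $x \mapsto x+1$ yields the companion identity $\sigma(\overline{M_2}^{\,2}) = M_1 M_3$, completing part (ii).

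Since the substantive work is already packaged inside Canaday's Theorem 8 and Theorem \ref{result0}, no real obstacle remains; the only thing to be careful about is the characteristic-$2$ polynomial arithmetic and the systematic use of the involution $x \mapsto x+1$ to transfer statements between $M_2$ and $\overline{M_2}$, and between $x$ and $x+1$.
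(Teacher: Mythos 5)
Your proposal is correct and follows essentially the same route as the paper, which states this lemma without a separate proof precisely because it "follows from Theorem 8 in \cite{Canaday} and from Theorem \ref{result0}": part (i) is Canaday's classification plus the explicit factorizations you verify, and part (ii) uses Theorem \ref{result0}(i) to eliminate all cases except $2h=2$ with $M \in \{M_2, \overline{M_2}\}$, where the direct computation $\sigma(M_2^2) = M_1\overline{M_3}$ (and its image under $x \mapsto x+1$) finishes the argument. Your arithmetic checks out, so nothing further is needed.
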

We dress from Lemma \ref{canadayperf}, the following tables of all the forms of $a$, $b$, $P_i$ and $h_i$ which satisfy Lemma \ref{sigmaPjhj}, if $P_i \in {\mathcal{M}}$ and if $h_i \not= 2^{n_i}-1$.\\
\\
$\begin{array}{|l|l|}
\hline
&\\
a&\sigma(x^a)\\
\hline
&\\
3\cdot 2^n-1& (x+1)^{2^n-1} {M_1}^{2^n}\\
\hline
&\\
5\cdot 2^n-1& (x+1)^{2^n-1} {M_3}^{2^n}\\
\hline
&\\
7\cdot 2^n-1& (x+1)^{2^n-1} {M_2}^{2^n} \ {\overline{M_2}}^{\ 2^n}\\
\hline
\end{array}$ \ \  $\begin{array}{|l|l|}
\hline
&\\
b&\sigma((x+1)^b)\\
\hline
&\\
3\cdot 2^m-1& x^{2^m-1} {M_1}^{2^m}\\
\hline
&\\
5\cdot 2^m-1& x^{2^m-1} {\overline{M_3}}^{\ 2^m}\\
\hline
&\\
7\cdot 2^m-1& x^{2^m-1} {M_2}^{2^m} \ {\overline{M_2}}^{\ 2^m}\\
\hline
\end{array}$\\
\\
$\begin{array}{|l|l|l|}
\hline
&&\\
P_i&h_i&\sigma({P_i}^{h_i})\\
\hline
&&\\
M_2&3\cdot 2^{n_i}-1& (1+M_2)^{2^{n_i}-1} {M_1}^{2^{n_i}} \ {\overline{M_3}}^{\ 2^{n_i}}\\
\hline
&&\\
\overline{M_2}&3\cdot 2^{n_i}-1& (1+\overline{M_2})^{2^{n_i}-1} {M_1}^{2^{n_i}} \ {M_3}^{2^{n_i}}\\
\hline
\end{array}$
\begin{corollary} \label{ifM2divsor}
Suppose that $A_1$ is perfect. Then, neither $M_2$ nor $\overline{M_2}$ divides $\sigma({P_i}^{h_i})$ if $P_i \in {\mathcal{M}}$. Moreover, $\overline{M_2}$ divides $A_1$ whenever $M_2$ divides $A_1$ and their exponents $($in $A_1)$ are equal.
\end{corollary}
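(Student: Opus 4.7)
The proof plan is a direct inspection of the three tables preceding the corollary. Since $A_1$ is perfect, multiplicativity gives
$$A_1 = \sigma(A_1) = \sigma(x^a)\,\sigma((x+1)^b) \prod_{P_i \in \mathcal{M}} \sigma(P_i^{h_i}),$$
so both parts of the claim reduce to tracking the multiplicities of $M_2$ and $\overline{M_2}$ in each factor on the right-hand side.

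For the first assertion, fix $i$ with $P_i \in \mathcal{M}$. If $h_i = 2^{n_i}-1$ for some $n_i \in \N^{*}$, then the characteristic-$2$ telescoping identity $\sigma(P^{2^n-1}) = (P+1)^{2^n-1}$ gives $\sigma(P_i^{h_i}) = (P_i+1)^{2^{n_i}-1}$; since $P_i$ is a Mersenne prime, $P_i+1$ is a product of powers of $x$ and $x+1$ only, so neither $M_2$ nor $\overline{M_2}$ divides $\sigma(P_i^{h_i})$. Otherwise $h_i \ne 2^{n_i}-1$ for every $n_i$; in this case Lemma \ref{sigmaPjhj} combined with Lemma \ref{canadayperf}(ii) forces $P_i \in \{M_2,\overline{M_2}\}$ and $h_i = 3\cdot 2^{n_i}-1$, and the third table displays $\sigma(P_i^{h_i})$ explicitly as a product of powers of $x$, $x+1$, $M_1$ and exactly one of $M_3$, $\overline{M_3}$, again missing both $M_2$ and $\overline{M_2}$.

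For the second assertion, the first part reduces the computation of the multiplicities of $M_2$ and $\overline{M_2}$ in $A_1 = \sigma(A_1)$ to their contributions from $\sigma(x^a)$ and $\sigma((x+1)^b)$ alone. Reading the first two tables, $M_2$ occurs in $\sigma(x^a)$ exactly when $a = 7 \cdot 2^n - 1$, and in that case $\overline{M_2}$ occurs with the \emph{same} exponent $2^n$; symmetrically, $M_2$ and $\overline{M_2}$ occur together with common exponent $2^m$ in $\sigma((x+1)^b)$ precisely when $b = 7 \cdot 2^m - 1$. Summing these pairwise-symmetric contributions yields equal total multiplicities for $M_2$ and $\overline{M_2}$ in $A_1$, both positive under the same condition on $(a,b)$. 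Hence $M_2 \mid A_1$ iff $\overline{M_2} \mid A_1$, with equal multiplicities.

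The entire argument is elementary bookkeeping; the only point requiring minor care is to remember to handle separately the ``degenerate'' case $h_i = 2^{n_i}-1$, which is deliberately excluded from the third table but must be covered to make the first assertion exhaustive.
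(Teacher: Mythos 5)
Your proposal is correct and follows essentially the same route as the paper: the first assertion is read off from Lemma \ref{canadayperf}-(ii) via the tables (you are slightly more explicit in separating the case $h_i = 2^{n_i}-1$, which is a welcome clarification), and the second assertion is the same bookkeeping of the $M_2$- and $\overline{M_2}$-multiplicities contributed by $\sigma(x^a)$ and $\sigma((x+1)^b)$, which the paper merely spells out as two explicit cases for the common exponent $\ell$.
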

\begin{proof}
The first statement follows from Lemma \ref{canadayperf}-(ii). Now, if $M_2$ divides $A_1 = \sigma(A_1)$, then $M_2$ divides $\displaystyle{\sigma(x^a) \ \sigma((x+1)^b) \prod_{P_i \in {\mathcal{M}}} \sigma(P_i^{h_i})}$. 
Hence, $M_2$ divides $\sigma(x^a) \sigma((x+1)^b)$. The above tables show that $a$ or $b$ is of the form $7 \cdot 2^n-1$, where $n \in \N$. So, $\overline{M_2}$ divides $\sigma(A_1) = A_1$. It suffices to consider two cases. If $a=7 \cdot 2^{n}-1$ and $b=7 \cdot 2^m-1$, then ${M_2}^{\ell} \| A_1$ and ${\overline{M_2}}^{\ \ell} \| A_1$, with $\ell = 2^n + 2^m$. If $a=7 \cdot 2^{n}-1$ and $(b=3 \cdot 2^m-1$ or $b=5 \cdot 2^m-1)$, then ${M_2}^{\ell} \| A_1$ and ${\overline{M_2}}^{\ \ell} \| A_1$, with $\ell = 2^n$.
\end{proof}

\begin{lemma} \label{divisorsigmA1}~\\
If $P$ is a Mersenne prime divisor of $\sigma(A_1)$, then $P, \overline{P} \in \{M_1,M_2, M_3\}$.
\end{lemma}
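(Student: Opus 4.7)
The plan is to exploit the multiplicativity of $\sigma$ together with the enumeration already carried out in Lemma \ref{canadayperf} and the three tables just above. Writing
$$\sigma(A_1) = \sigma(x^a)\,\sigma((x+1)^b) \prod_{P_i \in \mathcal{M}} \sigma(P_i^{h_i}),$$
Lemma \ref{sigmaPjhj} (which applies since $A$ is perfect) guarantees that each factor on the right is divisible only by $x$, $x+1$, or Mersenne primes. The proof then reduces to reading off, case by case, which Mersenne primes can actually occur in each factor.

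For $\sigma(x^a)$ and $\sigma((x+1)^b)$, the first two tables, together with the trivial case $a = 2^n - 1$ where $\sigma(x^a) = (x+1)^{2^n - 1}$ contains no Mersenne prime (and the analogous case for $b$), show that the Mersenne divisors lie in $\{M_1, M_2, \overline{M_2}, M_3\}$ and $\{M_1, M_2, \overline{M_2}, \overline{M_3}\}$ respectively, both subsets of $\mathcal{M}$. For a factor $\sigma(P_i^{h_i})$ with $P_i \in \mathcal{M}$, I would write $h_i + 1 = 2^{n_i} m_i$ with $m_i$ odd and use the characteristic-$2$ identity $\sigma(P_i^{h_i}) = \sigma(P_i^{m_i - 1})^{2^{n_i}} (1 + P_i)^{2^{n_i} - 1}$. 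Since $\sigma(P_i^{m_i - 1})$ is automatically coprime to $x(x+1)$ (because $P_i(0) = P_i(1) = 1$, so evaluating at $0$ and at $1$ gives an odd number of $1$s), Lemma \ref{canadayperf}-(ii) forces either $m_i = 1$, in which case $\sigma(P_i^{h_i}) = (1 + P_i)^{2^{n_i} - 1}$ whose irreducible divisors are only $x$ and $x+1$, or $m_i = 3$ with $P_i \in \{M_2, \overline{M_2}\}$, in which case the third table exhibits the Mersenne contributions as $\{M_1, \overline{M_3}\}$ or $\{M_1, M_3\}$.

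Taking the union over all three kinds of factors, every Mersenne prime divisor of $\sigma(A_1)$ lies in $\mathcal{M} = \{M_1, M_2, \overline{M_2}, M_3, \overline{M_3}\}$, which is exactly the statement once ``$P, \overline{P} \in \{M_1, M_2, M_3\}$'' is read modulo the conjugation $P \leftrightarrow \overline{P}$. There is no substantial obstacle here: the real content is already absorbed into Lemma \ref{canadayperf} and the tables, and the only bookkeeping to watch is to treat the trivial exponent case $a$, $b$, or $h_i$ of the form $2^n - 1$ alongside the non-trivial cases tabulated above, and to note that for $P_i \in \{M_1, M_3, \overline{M_3}\}$ only the trivial case survives Lemma \ref{canadayperf}-(ii).
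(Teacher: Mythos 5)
Your proposal is correct and follows essentially the same route as the paper: decompose $\sigma(A_1)$ by multiplicativity, invoke Lemma \ref{canadayperf}-(i) for the factors $\sigma(x^a)$ and $\sigma((x+1)^b)$, and use Lemma \ref{canadayperf}-(ii) together with the tables for the factors $\sigma(P_i^{h_i})$ with $P_i \in {\mathcal{M}}$. The only difference is that you re-derive the table entries explicitly via the identity $\sigma(P_i^{h_i}) = (1+P_i)^{2^{n_i}-1}\,\sigma(P_i^{m_i-1})^{2^{n_i}}$, whereas the paper simply reads them off the tables it has already dressed from the same lemmas.
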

\begin{proof}
One has: $\displaystyle{\sigma(A_1) = \sigma(x^a) \sigma((x+1)^b) \prod_{P_i \in {\mathcal{M}}} \sigma(P_i^{h_i})}$. If $P$ divides $\sigma(x^a) \sigma((x+1)^b)$, then $P \in {\mathcal{M}}$ by Lemma \ref{canadayperf}-(i). If $P$ divides $\sigma(P_i^{h_i})$ with $P_i \in {\mathcal{M}}$, then 
$P_i \in \{M_2, \overline{M_2} \}$, $(h_i = 2$ or $h_i$ is of the form $3 \cdot 2^{n_i}-1)$ and $P, \overline{P} \in \{M_1, M_3\}$ (see the above tables).
\end{proof}

\begin{lemma} \label{gcdMjsigmA1}
If $A$ is perfect, then $A=A_1$.
\end{lemma}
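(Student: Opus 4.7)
The plan is to leverage the coprime splitting $A = A_1 A_2$ to show that each factor is perfect on its own, and then to kill $A_2$ via the standing indecomposability hypothesis.

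Starting from $A$ perfect and $\gcd(A_1, A_2) = 1$ (by construction), I would first rewrite the identity $\sigma(A) = A$ as $\sigma(A_1)\sigma(A_2) = A_1 A_2$. The key structural observation is then that $\sigma(A_1)$ and $A_2$ have no common prime factor: indeed, by Lemma \ref{sigmaPjhj} every prime factor of $\sigma(A_1)$ is $x$, $x+1$, or a Mersenne prime, and by Lemma \ref{divisorsigmA1} the Mersenne ones are forced to lie in $\mathcal{M}$; but by definition every prime factor of $A_2$ is a Mersenne prime outside $\mathcal{M}$, so the two supports are disjoint.

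Next I would extract the $A_2$-primary part of both sides of $\sigma(A_1)\sigma(A_2) = A_1 A_2$. The disjointness just obtained kills the $\sigma(A_1)$ contribution and the $A_1$ contribution, so $v_P(\sigma(A_2)) = v_P(A_2)$ for every prime $P$ dividing $A_2$; in particular $A_2 \mid \sigma(A_2)$ with equality of $A_2$-multiplicities. A degree count now finishes the decoupling: since $\deg \sigma(P^h) = h \deg P$ for any prime power, $\sigma$ preserves degree, so $\deg \sigma(A_2) = \deg A_2$ and the quotient $\sigma(A_2)/A_2$ is a polynomial of degree $0$, hence equal to $1$. Therefore $\sigma(A_2) = A_2$, and substituting back into the perfect equation yields $\sigma(A_1) = A_1$ as well.

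Finally, $A$ has now been displayed as a product of two coprime perfect polynomials $A_1$ and $A_2$. The standing indecomposability of $A$ forces one of them to be a constant, hence $1$; since $a \geq 1$ implies $x \mid A_1$, we have $A_1 \neq 1$, so $A_2 = 1$ and $A = A_1$, as desired.

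The only delicate point is the second step: one must notice that Lemma \ref{divisorsigmA1} is already strong enough to produce the coprimality $\gcd(\sigma(A_1), A_2) = 1$, after which the tautological degree identity $\deg \sigma(A_2) = \deg A_2$ cleanly collapses the single equation $\sigma(A)=A$ into the two equations $\sigma(A_1)=A_1$ and $\sigma(A_2)=A_2$. In particular the heavy machinery of Theorem \ref{result0} is not needed here; its role is to rule out the non-$\mathcal{M}$ Mersenne primes at an earlier stage, through Lemma \ref{divisorsigmA1}.
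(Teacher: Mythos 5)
Your proof is correct and follows essentially the same route as the paper: both establish $\gcd(\sigma(A_1),A_2)=1$ via Lemma \ref{divisorsigmA1}, deduce $A_2\mid\sigma(A_2)$ from $\sigma(A_1)\sigma(A_2)=A_1A_2$, conclude $\sigma(A_2)=A_2$, and invoke the standing indecomposability to force $A_2=1$. Your explicit degree count $\deg\sigma(A_2)=\deg A_2$ merely makes precise a step the paper leaves implicit.
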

\begin{proof}
We claim that $A_2 = 1$. Let $P_j \not\in {\mathcal{M}}$ and $Q_i \in {\mathcal{M}}$. Then, $P_j$ divides neither $\sigma(x^a)$, $\sigma((x+1)^b)$ nor $\sigma(Q_i^{h_i})$. Thus $\gcd(P_j^{h_j}, \sigma(A_1)) = 1$. \\
Observe that $P_j^{h_j}$ divides $\sigma(A_2)$ because $P_j^{h_j}$ divides $A = \sigma(A)=\sigma(A_1) \sigma(A_2)$. Hence, $A_2$ divides $\sigma(A_2)$. So, $A_2$ is perfect and it is equal to $1$, $A$ being indecomposable.
\end{proof}

\begin{proposition} \label{caseperfect}
If $A_1$ is perfect, then $h_j=2^{n_j}-1$ for any $P_j \in {\mathcal{M}}$.
\end{proposition}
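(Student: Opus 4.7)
The plan is to combine Lemma~\ref{sigmaPjhj} with Theorem~\ref{result0}(i) to narrow down the admissible exponents $h_i$ drastically, and then to dispose of the one residual case by a valuation-balance computation, in the spirit of Example~\ref{upexamples}(i).

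For each $P_i \in {\mathcal{M}}$ dividing $A_1$, write $h_i + 1 = 2^{n_i}(2k_i + 1)$ with $k_i \geq 0$. The freshman's dream in characteristic $2$ applied to $\sigma(P_i^{h_i}) = (P_i^{h_i+1}+1)/(P_i+1)$ yields
$$\sigma(P_i^{h_i}) = (P_i+1)^{2^{n_i}-1}\,\sigma(P_i^{2k_i})^{2^{n_i}}.$$
Since $P_i$ is Mersenne, $P_i + 1$ is a product of powers of $x$ and $x+1$; thus by Lemma~\ref{sigmaPjhj} the factor $\sigma(P_i^{2k_i})$ is itself only divisible by $x$, $x+1$ and Mersenne primes. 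Theorem~\ref{result0}(i) then forces $k_i = 0$ except possibly when $P_i \in \{M_2, \overline{M_2}\}$ and $k_i = 1$. Hence either $h_i = 2^{n_i} - 1$ (the desired conclusion) or $P_i \in \{M_2, \overline{M_2}\}$ and $h_i = 3 \cdot 2^{n_i} - 1$.

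Assume, for contradiction, $h_{M_2} = 3 \cdot 2^n - 1$ for some $n \geq 0$; by Corollary~\ref{ifM2divsor}, also $h_{\overline{M_2}} = 3 \cdot 2^n - 1$. Write $a+1 = 2^{m_a} r_a$, $b+1 = 2^{m_b} r_b$ with $r_a, r_b \in \{1,3,5,7\}$ (the four rows of the first two tables), and $h_{M_j} = 2^{n_j} - 1$ for $M_j \in \{M_1, M_3, \overline{M_3}\}$ (with $n_j = 0$ encoding $M_j \nmid A_1$). Each Mersenne-prime exponent then gives a balance equation between $A_1$ and $\sigma(A_1)$. The $M_2$-balance reads
$$3 \cdot 2^n - 1 = \eta_a\, 2^{m_a} + \eta_b\, 2^{m_b}, \qquad \eta_{\bullet} = 1 \text{ iff } r_{\bullet} = 7.$$
Since $3 \cdot 2^n - 1 = 2^{n+1} + (2^n - 1)$ has binary weight $n+1$ while the right-hand side has weight at most $2$, one is forced into $n \in \{0, 1\}$ and a tightly restricted pair $(m_a, m_b)$. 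The $M_1$-balance
$$2^{n_1} - 1 = \epsilon_a\, 2^{m_a} + \epsilon_b\, 2^{m_b} + 2^{n+1}, \qquad \epsilon_{\bullet} = 1 \text{ iff } r_{\bullet} = 3,$$
together with the analogous $M_3$ and $\overline{M_3}$ balances $2^{n_3} - 1 = [r_a=5]\cdot 2^{m_a} + 2^n$ and $2^{\overline{n}_3} - 1 = [r_b=5]\cdot 2^{m_b} + 2^n$, then pins down every remaining parameter up to finitely many configurations.

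The main arithmetic obstacle is the final check. For each surviving configuration one compares $val_x(A_1) = a$ with
$$val_x(\sigma(A_1)) = (2^{m_b}-1) + (2^{n_1}-1) + (2^{n_3}-1) + 3(2^{\overline{n}_3}-1) + 3(2^n - 1),$$
read off from the tables together with $\sigma(M_2^{h_{M_2}}) = x^{2^n-1}(x+1)^{2(2^n-1)} M_1^{2^n}\overline{M_3}^{2^n}$ and $\sigma(\overline{M_2}^{h_{\overline{M_2}}}) = x^{2(2^n-1)}(x+1)^{2^n-1} M_1^{2^n} M_3^{2^n}$. The principal branch $(n=0,\ \eta_a = 1,\ \eta_b = 0)$ forces $a = 13$, $b = 2$, $h_{M_1} = 3$, $h_{M_3} = h_{\overline{M_3}} = 1$, i.e.\ precisely $S_1$ from Example~\ref{upexamples}(i), where $a = 13$ against $val_x(\sigma(A_1)) = 7$ yields the contradiction. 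The symmetric branch $(\eta_a = 0, \eta_b = 1)$ is disposed of by the dual $val_{x+1}$-balance; the branch $n = 1$ is ruled out already by the $M_1$-balance (no $n_1$ satisfies $2^{n_1} = 5$); and the remaining weight-$2$ subcase $(m_a = m_b = 0,\ \eta_a = \eta_b = 1)$ likewise fails the $M_1$-balance. This exhausts the cases and completes the proof.
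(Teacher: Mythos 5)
Your proposal is correct and takes essentially the same route as the paper: the same reduction via Lemma~\ref{sigmaPjhj}, Theorem~\ref{result0}(i) and the factorization $\sigma(P^{2^{n}u-1})=(1+P)^{2^{n}-1}\sigma(P^{u-1})^{2^{n}}$ to the residual case $h_{M_2}=h_{\overline{M_2}}=3\cdot 2^{n}-1$, followed by the same exponent balances for $M_2$ and $M_1$ and the same terminal contradiction at $S_1$ via Examples~\ref{upexamples}(i). Your binary-weight bookkeeping is merely a compact repackaging of the paper's four explicit cases on the forms of $a$ and $b$.
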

\begin{proof}  We refer to Tables at the beginning of this section.\\
(i) Suppose that $P_j \not\in  \{M_2, \overline{M_2}\}$. If $h_j$ is even, then $\sigma(P_j^{h_j})$ is divisible by a non-Mersenne prime.
It contradicts Lemma \ref{sigmaPjhj}. If $hj=2^{n_j}u_j-1$ with $u_j \geq 3$ odd, then
$\sigma(P_j^{h_j}) = (1+P_j)^{2^{n_j}-1}\cdot (1+P_j+\cdots + P_j^{u_j-1})^{2^{n_j}}$. Since $1+P_j+\cdots + P_j^{u_j-1} = \sigma({P_j}^{u_j-1})$ is divisible by a non-Mersenne prime, we also get a contradiction to Lemma \ref{sigmaPjhj}.\\
(ii) If $P_j \in \{M_2,\overline{M_2} \}$ and ($h_j$ is even or it is of the form $2^{n_j}u_j-1$, with $u_j \geq 3$ odd and $n_j \geq 1$),
then Corollary \ref{ifM2divsor} implies that there exists $\ell \in \N^*$ such that ${M_2}^{\ell} \| A_1$ and  ${\overline{M_2}}^{\ \ell} \| A_1$. Recall that $\sigma({M_2}^2) = M_1 \overline{M_3}$ and $\sigma({\overline{M_2}}^{\ 2}) = M_1 M_3$. We proceed as in the proof of Corollary \ref{ifM2divsor}. It suffices to distinguish four cases which give contradictions.\\
$\bullet$ Case 1: $a=7 \cdot 2^{n}-1$ and $b=7 \cdot 2^m-1$\\
One has $\ell = 2^n + 2^m$ and neither $M_1$ nor $M_3$ divides $\sigma(x^a) \ \sigma((x+1)^b)$.\\
If $h_j$ is even, then $h_j = 2 = \ell$. So, $n=m=0$, ${M_1}^2 \| \sigma(A_1) = A_1$. It contradicts the part (i) of our proof.\\
If $h_j = 2^{n_j}u_j-1$ with $u_j \geq 3$ odd and $n_j \geq 1$, then $u_j = 3$ and ${M_1}^{2 \cdot 2^{n_j}} \| A_1$.\\
$\bullet$ Case 2: $a=7 \cdot 2^{n}-1$ and $b=5 \cdot 2^m-1$\\
One has $\ell = 2^n$ and $M_1 \nmid \sigma(x^a) \sigma((x+1)^b)$. If $h_j$ is even, then $2^n = \ell = h_j =  2$. So, $n=1$ and ${M_1}^2 \| A_1$.
If $h_j = 2^{n_j}u_j-1$, with $u_j \geq 3$ odd and $n_j \geq 1$, then $u_j = 3$ and $2^n = \ell = h_j =  3 \cdot 2^{n_j}-1$. It is impossible.\\
$\bullet$ Case 3: $a=7 \cdot 2^{n}-1$, $b=3 \cdot 2^m-1$ and $h_j$ is even\\
As above, $2^n=\ell = h_j=2$, ${M_1}^{2^m}$ divides $\sigma((x+1)^b)$ and ${M_1}^{2^n+2^m}$ divides $\sigma(A_1)=A_1$. So, $n=1$ and ${M_1}^{2^m+2} \| A_1$. Thus, the part (i) implies that $m=0$. Hence,
$A_1 = S_1=x^{13} (x+1)^2 M_1^3 {M_2}^2 {\overline{M_2}}^{\ 2} M_3 \overline{M_3}$ which is not perfect (see Examples \ref{upexamples}).\\
$\bullet$ Case 4: $a=7 \cdot 2^{n}-1$, $b=3 \cdot 2^m-1$, $h_j = 2^{n_j}u_j-1$, $u_j \geq 3$ odd, $n_j \geq 1$\\
One has $u_j = 3$ and $2^n = \ell = h_j =  3 \cdot 2^{n_j}-1$. It is impossible.
\end{proof}
Lemma \ref{gcdMjsigmA1}, Proposition \ref{caseperfect} and Lemma \ref{oldresult1} imply
\begin{corollary} \label{A1perfect}
If $A$ is perfect, then $A=A_1 \in {\mathcal{P}}$.
\end{corollary}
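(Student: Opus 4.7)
The plan is to chain together the three results cited just above the corollary statement: Lemma \ref{gcdMjsigmA1}, Proposition \ref{caseperfect}, and Lemma \ref{oldresult1}. Since all three are already established in the preceding excerpt, the corollary is essentially a one-line consequence, and the proof proposal consists mainly in explaining the logical order.

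First I would invoke Lemma \ref{gcdMjsigmA1}: assuming $A$ is perfect, the lemma gives $A = A_1$. In particular, the factorization $A = x^a(x+1)^b \prod_{i \in I} P_i^{h_i}$ has every $P_i$ lying in $\mathcal{M} = \{M_1, M_2, \overline{M_2}, M_3, \overline{M_3}\}$, and there are no Mersenne prime divisors outside $\mathcal{M}$. Equivalently, the sub-product $A_2 = \prod_{P_j \notin \mathcal{M}} P_j^{h_j}$ reduces to $1$.

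Next I would apply Proposition \ref{caseperfect} to $A_1$: since $A = A_1$ is perfect, the proposition forces each exponent $h_j$ attached to a prime $P_j \in \mathcal{M}$ to have the form $h_j = 2^{n_j} - 1$ for some $n_j \in \N$. Combined with the previous step, this means \emph{every} exponent $h_i$ in the factorization of $A$ is of the form $2^{n_i}-1$.

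Finally I would appeal to Lemma \ref{oldresult1} (which is the earlier result from \cite{Gall-Rahav12}): under the hypothesis that all $h_i$ are of the shape $2^{n_i}-1$, one concludes $A \in \mathcal{P}$. Composing the three steps yields $A = A_1 \in \mathcal{P}$, as required. I do not anticipate any obstacle here: all the real work has already been absorbed into Lemma \ref{gcdMjsigmA1} (which removed the non-$\mathcal{M}$ Mersenne primes by an indecomposability argument on $A_2$) and Proposition \ref{caseperfect} (the delicate case analysis on $M_2, \overline{M_2}$ via Corollary \ref{ifM2divsor} and Examples \ref{upexamples}), so the corollary is a direct assembly.
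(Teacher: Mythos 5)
Your proof is correct and follows exactly the route the paper takes: the paper gives no separate argument for this corollary beyond the single sentence that Lemma \ref{gcdMjsigmA1}, Proposition \ref{caseperfect} and Lemma \ref{oldresult1} imply it, which is precisely the three-step chain you describe. Nothing is missing.
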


\subsection{Case of unitary perfect ($u. p$) polynomials} \label{case-unit-perfect}
Similar arguments give Proposition \ref{caseunitperfect} which finishes our proof.
\begin{lemma} \label{unitary1}
Let $S \in \F_2[x]$ be an irreducible polynomial. Then, for any $n, u \in \N$ with $u$ odd,
$\sigma^*(S^{2^n u}) = (1+S)^{2^n} (\sigma(S^{u-1}))^{2^n}$.
\end{lemma}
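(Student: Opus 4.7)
The plan is to reduce the identity to a short chain of elementary manipulations in characteristic $2$. The key observation is that since $S$ is irreducible, the only unitary divisors of $S^m$ (for any $m\in\N^*$) are $1$ and $S^m$, because for $1\le k\le m-1$ one has $\gcd(S^k,S^{m-k})=S\neq 1$. Consequently, $\sigma^*(S^m)=1+S^m$, and in particular
$$\sigma^*(S^{2^n u})=1+S^{2^n u}.$$

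Next, I would invoke the Frobenius in $\F_2[x]$ to write $1+S^{2^n u}=(1+S^u)^{2^n}$. The third ingredient is the geometric-sum factorization, valid over any commutative ring:
$$1+S^u=(1+S)\bigl(1+S+S^2+\cdots+S^{u-1}\bigr)=(1+S)\,\sigma(S^{u-1}),$$
where the right-hand equality uses that $\sigma(S^{u-1})=1+S+\cdots+S^{u-1}$ since $S$ is irreducible. Substituting this factorization into $(1+S^u)^{2^n}$ and distributing the $2^n$-th power yields exactly
$$\sigma^*(S^{2^n u})=(1+S)^{2^n}\bigl(\sigma(S^{u-1})\bigr)^{2^n},$$
which is the claim.

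There is no real obstacle here: the argument is a direct chain of identities combining (i) the trivial formula for $\sigma^*$ on a prime power, (ii) Frobenius in characteristic $2$, and (iii) the geometric series. The only small point to be careful about is the degenerate case $u=1$, where $\sigma(S^{u-1})=\sigma(1)=1$ and the identity collapses to $\sigma^*(S^{2^n})=(1+S)^{2^n}=1+S^{2^n}$, which is consistent with what we computed.
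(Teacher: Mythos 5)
Your proof is correct, and it is precisely the standard argument the paper implicitly relies on: the lemma is stated there without proof, and your three ingredients ($\sigma^*(S^m)=1+S^m$ for a prime power, the Frobenius map in characteristic $2$, and the geometric-series factorization $1+S^u=(1+S)\,\sigma(S^{u-1})$) are exactly what is needed. The remark on the degenerate case $u=1$ is a sensible check and nothing is missing.
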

We may dress the following tables, from Lemmas \ref{sigmaPjhj}, \ref{canadayperf} and \ref{unitary1}.\\
\\
$\begin{array}{|l|l|}
\hline
&\\
a&\sigma^*(x^a)\\
\hline
&\\
3\cdot 2^n& (x+1)^{2^n} {M_1}^{2^n}\\
\hline
&\\
5\cdot 2^n& (x+1)^{2^n} {M_3}^{2^n}\\
\hline
&\\
7\cdot 2^n& (x+1)^{2^n} {M_2}^{2^n} \ {\overline{M_2}}^{\ 2^n}\\
\hline
\end{array}$ \ \  $\begin{array}{|l|l|}
\hline
&\\
b&\sigma^*((x+1)^b)\\
\hline
&\\
3\cdot 2^m& x^{2^m} {M_1}^{2^m}\\
\hline
&\\
5\cdot 2^m& x^{2^m} {\overline{M_3}}^{\ 2^m}\\
\hline
&\\
7\cdot 2^m& x^{2^m} {M_2}^{2^m} \ {\overline{M_2}}^{\ 2^m}\\
\hline
\end{array}$\\
\\
$\begin{array}{|l|l|l|}
\hline
&&\\
P_i&h_i&\sigma^*({P_i}^{h_i})\\
\hline
&&\\
M_2&3\cdot 2^{n_i}&(1+M_2)^{2^{n_i}} {M_1}^{2^{n_i}} \  {\overline{M_3}}^{\ 2^{n_i}}\\
\hline
&&\\
\overline{M_2}&3\cdot 2^{n_i}& (1+\overline{M_2})^{2^{n_i}} {M_1}^{2^{n_i}} \ {M_3}^{2^{n_i}}\\
\hline
\end{array}$

\begin{lemma} \label{uperfectiseven}
Let $C \in \F_2[x] \setminus\{1\}$ be $u. p$. Then $C$ is even, $\overline{C}$ and $C^{2^r}$ are also $u. p$, for any $r \in \N$.
\end{lemma}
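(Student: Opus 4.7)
The plan is to establish the three assertions separately; each reduces to a single direct computation using the multiplicativity of $\sigma^*$.

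For the first claim, that $C$ is even, I would argue by contradiction. Suppose neither $x$ nor $x+1$ divides $C$, and write $C = \prod_{i=1}^k P_i^{m_i}$ where each $P_i$ is an odd irreducible factor and $k \geq 1$ since $C \neq 1$. Every odd irreducible polynomial $P$ over $\F_2$ satisfies $P(0) = P(1) = 1$, so $(1+P_i^{m_i})(0) = 0$ for every $i$. By multiplicativity, $\sigma^*(C) = \prod_i (1+P_i^{m_i})$, and hence $x$ divides $\sigma^*(C) = C$. This contradicts the assumption that $C$ has no linear factor.

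For the second claim, I would use the fact that the substitution $x \mapsto x+1$ is an involutive $\F_2$-algebra automorphism of $\F_2[x]$. It preserves degrees, irreducibility, and coprimality, so it sends unitary divisors of $C$ bijectively onto unitary divisors of $\overline{C}$. By definition of $\sigma^*$ as a sum of unitary divisors, this yields $\sigma^*(\overline{C}) = \overline{\sigma^*(C)} = \overline{C}$.

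For the third claim, the key tool is the Frobenius identity in characteristic two: $(1+P^m)^{2^r} = 1+P^{2^r m}$. Writing $C = \prod_i P_i^{m_i}$, so that $C^{2^r} = \prod_i P_i^{2^r m_i}$ with the same prime factors, multiplicativity of $\sigma^*$ gives
$$\sigma^*(C^{2^r}) = \prod_i (1+P_i^{2^r m_i}) = \prod_i (1+P_i^{m_i})^{2^r} = \sigma^*(C)^{2^r} = C^{2^r}.$$
There is no real obstacle here: each of the three statements is controlled by a single elementary feature of $\F_2[x]$, namely evaluation at $x = 0$, the involutive automorphism $x \mapsto x+1$, and the Frobenius, respectively.
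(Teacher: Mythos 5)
Your proposal is correct and follows essentially the same route as the paper: evenness via the fact that $1+P$ is even for each irreducible divisor $P$ (you phrase it as a contradiction, the paper argues directly), and the other two claims via the automorphism $x\mapsto x+1$ and the Frobenius identity $\sigma^*(C^{2^r})=(\sigma^*(C))^{2^r}$. The differences are purely cosmetic.
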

\begin{proof}
If $D$ is a divisor of $C$, then $\overline{D}$ divides  $\overline{C}$ and $D^{2^r}$ divides $C^{2^r}$. Thus,
$\sigma^*(\overline{C}) = \overline{\sigma^*(C)} = \overline{C}$ and $\sigma^*(C^{2^r}) = (\sigma^*(C))^{2^r} = C^{2^r}$.\\
It remains to prove that $C$ is even. Consider an irreducible divisor $P$ of $C$ and $k \in \N^*$ such that $P^k \| C$. The polynomial $1+P$ is even and divides $1+P^k = \sigma^*(P^k)$. So, $1+P$ divides $\sigma^*(C) = C$.
\end{proof}
\begin{definition}
We denote by $\sim$ the relation on $\F_2[x]$ defined as:  $S \sim T$ if there exists $\ell \in \Z$ such that
$S = T^{2^{\ell}}.$
\end{definition}
\begin{lemma} {\rm{(\cite{BeardU2}, Section 2})} \label{equivalenceclass}~\\
The relation $\sim$ is an equivalence relation on $\F_2[x]$. Each equivalence class contains a unique polynomial $B$ which is not a square, with $val_x(B) \leq val_{x+1}(B)$.
\end{lemma}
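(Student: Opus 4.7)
The plan is to verify the three equivalence axioms and then to establish existence and uniqueness of the non-square representative $B$ in each class.

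The equivalence axioms are essentially formal. Reflexivity follows from $S = S^{2^0}$; transitivity follows from $(U^{2^m})^{2^\ell} = U^{2^{\ell+m}}$; and symmetry uses that over $\F_2$ the Frobenius $F : P \mapsto P^2$ is injective on $\F_2[x]$, so the statement $S = T^{2^\ell}$ is equivalent to $T = S^{2^{-\ell}}$ in the sense that a $2^n$-th root exists and is unique whenever the polynomial is already known to be a $2^n$-th power.

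For existence of $B$, I would start with any member $T$ of a fixed class (nontrivial ones; the classes $\{0\}$ and $\{1\}$ are immediate with $B = 0$ or $B = 1$). If $T$ is a square, extract a square root $T_1$, which is possible in characteristic $2$ because then every monomial of $T$ has even exponent. Since $T \sim T_1$ and $\deg T_1 = \deg T / 2$, iterating this ``take a square root while possible'' procedure strictly decreases the degree at each step, hence terminates after finitely many iterations at a non-square $B$ in the class. For uniqueness of $B$ as a non-square, suppose $B_1, B_2$ both lie in the same class and are non-squares, and write $B_1 = B_2^{2^\ell}$ for some $\ell \in \Z$. If $\ell \geq 1$, then $B_1 = (B_2^{2^{\ell-1}})^2$ is a square, contradicting the hypothesis on $B_1$; symmetrically, $\ell \leq -1$ forces $B_2$ to be a square. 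Hence $\ell = 0$ and $B_1 = B_2$.

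The one genuinely delicate point, and what I expect to be the main obstacle to a clean write-up, is the normalization $val_x(B) \leq val_{x+1}(B)$: this cannot be arranged freely within a fixed $\sim$-class, because $\overline{B}$ is not in general $\sim$-equivalent to $B$. I would read this condition in the spirit of the reference \cite{BeardU2}, where the formal equivalence $\sim$ is used in tandem with the implicit $\overline{\cdot}$-symmetry arising from the $x \leftrightarrow x+1$ involution of $\F_2[x]$; the val-inequality then serves as the convention that breaks the tie between $B$ and $\overline{B}$. Once that bookkeeping is settled, the algebraic content of the lemma reduces to the direct Frobenius argument outlined above.
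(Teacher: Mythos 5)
The paper does not prove this lemma: it is quoted from \cite{BeardU2}, Section 2, with no internal argument to compare yours against. On its own terms your proof is correct and supplies exactly what the citation leaves out. The equivalence axioms are as you say, with the convention that $T^{2^{\ell}}$ for $\ell<0$ denotes the $2^{-\ell}$-th root, which is unique when it exists because the Frobenius is injective on $\F_2[x]$; existence of a non-square representative follows from your descent on degree (using that $S\in\F_2[x]$ is a square precisely when $S\in\F_2[x^2]$, i.e.\ all exponents are even), and uniqueness from the observation that $B_1=B_2^{2^{\ell}}$ with $\ell\neq 0$ forces one of the two to be a square.

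Your reservation about the normalization $val_x(B)\leq val_{x+1}(B)$ is well founded and is the one substantive point worth recording. With $\sim$ exactly as defined, the unique non-square in a class is whatever it is, and the inequality cannot be imposed; the paper's own list ${\mathcal{P}}_u$ confirms this (for instance $B_2=x^3(x+1)^2M_1$ has $val_x(B_2)=3>2=val_{x+1}(B_2)$), and Proposition \ref{caseunitperfect}(iii) is accordingly phrased as ``$A$ or $\overline{A}$ is of the form $B^{2^n}$''. The intended reading, as you surmise, is that representatives are taken up to the involution $S\mapsto\overline{S}$ as well; note that even then the inequality only distinguishes $B$ from $\overline{B}$ when it is strict, so a further convention would be needed when $val_x(B)=val_{x+1}(B)$ but $B\neq\overline{B}$. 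None of this affects the algebraic core of your argument, which is complete.
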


\begin{lemma} [Theorem 1.3 in \cite{Gall-Rahav12}] \label{oldresult2}
 If $h_i = 2^{n_i}$ for any $i \in I$, then $A$ $($or $\overline{A})$ is of the form $B^{2^n}$, where $B \in {\mathcal{P}}_u$.
\end{lemma}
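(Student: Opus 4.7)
The plan is to reduce the unitary perfect equation $\sigma^*(A) = A$ to a small combinatorial problem in the exponents, and then verify by enumeration that every solution is a $2^n$-th power of some $B \in \mathcal{P}_u$ (up to the involution $A \leftrightarrow \overline{A}$).

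First, I would exploit the hypothesis $h_i = 2^{n_i}$. By Lemma \ref{unitary1} applied with $u=1$, $\sigma^*(P_i^{h_i}) = (1+P_i)^{2^{n_i}}$. Since $P_i = 1 + x^{\alpha_i}(x+1)^{\beta_i}$ is Mersenne, $(1+P_i)^{2^{n_i}} = x^{\alpha_i 2^{n_i}}(x+1)^{\beta_i 2^{n_i}}$ contains no Mersenne factor at all. Therefore every Mersenne prime dividing $A = \sigma^*(A)$ must already appear inside the expression $\sigma^*(x^a)\,\sigma^*((x+1)^b) = (1+x^a)(1+(x+1)^b)$. Writing $a = u_a\, 2^{n_a}$ with $u_a$ odd gives $1 + x^a = ((1+x)\,\sigma(x^{u_a - 1}))^{2^{n_a}}$, so by Lemma \ref{sigmaPjhj} the factor $\sigma(x^{u_a - 1})$ is divisible only by Mersenne primes. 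Lemma \ref{canadayperf}-(i) applied to $u_a - 1 \in \{0,2,4,6\}$ then forces $u_a \in \{1,3,5,7\}$, and symmetrically $u_b \in \{1,3,5,7\}$. Reading off the three tables preceding Lemma \ref{uperfectiseven} then shows that every $P_i$ dividing $A$ must lie in $\mathcal{M}$.

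Next, I would use the equivalence $\sim$ of Lemma \ref{equivalenceclass} to normalize. Each $\sim$-class contains a unique non-square representative $B$ with $val_x(B) \leq val_{x+1}(B)$; since squaring preserves unitary perfection by Lemma \ref{uperfectiseven} and $\overline{A}$ is unitary perfect by the same lemma, we may, after possibly replacing $A$ by $\overline{A}$, take $A$ to be this minimal $B$. Equivalently, we may assume $a \leq b$ and that at least one of the exponents $a, b$, or some $h_i$ is odd. The remaining data describing $A$ is then the quadruple $(u_a, n_a, u_b, n_b)$, the subset $J \subseteq \mathcal{M}$ of Mersenne primes actually dividing $A$, and the exponents $\{n_i\}_{i \in J}$.

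Finally, I would solve $\sigma^*(A) = A$ prime by prime. Multiplicativity of $\sigma^*$ and the three tables express the multiplicity of $x$, $x+1$, and each $Q \in \mathcal{M}$ on the right-hand side as an explicit sum of $2^{n_a}, 2^{n_b}, 2^{n_i}$ depending on $(u_a, u_b, J)$, and Lemma \ref{upcriteres} demands these match the corresponding multiplicities on the left. The sixteen possibilities for $(u_a, u_b)$ each constrain $J$ tightly (for instance $u_a = 7$ forces $M_2, \overline{M_2} \in J$, and then the exponent equation for $M_1$ further restricts $n_a$), and the resulting small linear systems in the $n$'s are quickly solvable. The main obstacle is carrying out this bookkeeping cleanly: one has to track, for each of the five primes in $\mathcal{M}$, the contributions coming from $\sigma^*(x^a)$, $\sigma^*((x+1)^b)$, and from $\sigma^*$ of the $M_2$- and $\overline{M_2}$-powers present in $A$; discard branches in which the candidate is actually a square (contradicting minimality) or splits as a product of two coprime unitary perfect factors (violating the standing indecomposability hypothesis); and verify that the surviving configurations are precisely the nine polynomials $B_1, \ldots, B_9$.
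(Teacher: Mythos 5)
The paper does not actually prove this lemma: it is imported wholesale as Theorem~1.3 of \cite{Gall-Rahav12}, so your proposal is not a variant of the paper's argument but a self-contained replacement for the citation. As such it holds up. Your key observation --- that $\sigma^*(P_i^{2^{n_i}})=(1+P_i)^{2^{n_i}}=x^{\alpha_i 2^{n_i}}(x+1)^{\beta_i 2^{n_i}}$ contributes no odd factor, so every Mersenne prime of $A=\sigma^*(A)$ must come from $(1+x^a)(1+(x+1)^b)$, whence $u_a,u_b\in\{1,3,5,7\}$ by Lemma~\ref{canadayperf}-(i) and every $P_i$ lies in ${\mathcal{M}}$ --- is exactly the right reduction, and I checked that the resulting multiplicity equations (one for $x$, one for $x+1$, one for each element of ${\mathcal{M}}$, with right-hand sides read off the tables) do force, for each admissible pair $(u_a,u_b)$, a unique solution in the exponents, recovering precisely $B_1,\dots,B_9$ and their conjugates. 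What your route buys is independence from \cite{Gall-Rahav12}, using only tools already displayed in Section~\ref{case-unit-perfect}; what it costs is the sixteen-case bookkeeping you only sketch. Three small points to tighten: (a) the normalization to the non-square representative needs the (easy but unstated) converse of Lemma~\ref{uperfectiseven}, namely that $\sigma^*(C^2)=(\sigma^*(C))^2$ forces a square root of a unitary perfect polynomial to be unitary perfect; (b) the branch $u_a=u_b=1$, $I=\emptyset$ produces $(x^2+x)^{2^n}$, which is unitary perfect, indecomposable, and \emph{not} of the form $B^{2^n}$ with $B\in{\mathcal{P}}_u$ --- this is a boundary defect of the statement itself (one must read $I\neq\emptyset$), but your write-up should say explicitly that this is the branch being excluded rather than fold it under ``decomposable''; (c) indecomposability is in fact never needed in the surviving branches, since every candidate is divisible by both $x$ and $x+1$ and so admits no factorization into coprime nonconstant unitary perfect pieces.
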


\begin{proposition} \label{caseunitperfect}
(i) If $A$ is $u. p$, then $A = A_1$.\\
(ii) If $A_1$ is $u. p$, then $h_j=2^{n_j}$ for any $P_j \in {\mathcal{M}}$.\\
(iii) If $A$ is $u. p$, then $A$ or $\overline{A}$ is of the form $B^{2^n}$, where $B \in {\mathcal{P}}_u$.
\end{proposition}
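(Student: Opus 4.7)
The overall plan is to mirror the perfect-case arguments --- Lemma \ref{gcdMjsigmA1}, Corollary \ref{ifM2divsor}, and Proposition \ref{caseperfect} --- replacing $\sigma$ by $\sigma^*$ and reducing everything to the classical $\sigma$ via Lemma \ref{unitary1}. For part (i), I first read off from the three unitary-case tables displayed just above that $\sigma^*(x^a)$, $\sigma^*((x+1)^b)$ and every $\sigma^*(P_i^{h_i})$ with $P_i \in \mathcal{M}$ are supported only on primes in $\mathcal{M} \cup \{x, x+1\}$. Hence $\gcd(A_2, \sigma^*(A_1)) = 1$, and combining $A = A_1 A_2 = \sigma^*(A_1)\sigma^*(A_2)$ with $\deg \sigma^*(S) = \deg S$ forces $A_2 = \sigma^*(A_2)$, so $A_2$ is unitary perfect. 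Since $A$ is even (Lemma \ref{uperfectiseven}) we have $A_1 \ne 1$, and indecomposability of $A$ then gives $A_2 = 1$.

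For part (ii), write $h_j = 2^{n_j} u_j$ with $u_j$ odd and apply Lemma \ref{unitary1} to get $\sigma^*(P_j^{h_j}) = (1 + P_j)^{2^{n_j}} \sigma(P_j^{u_j-1})^{2^{n_j}}$. By Lemma \ref{sigmaPjhj}, $\sigma(P_j^{u_j-1})$ has only Mersenne divisors, so Lemma \ref{canadayperf}(ii) leaves two options: $u_j = 1$ (yielding $h_j = 2^{n_j}$, the desired conclusion) or $u_j = 3$ with $P_j \in \{M_2, \overline{M_2}\}$. Assume the latter, and WLOG take $P_j = M_2$ with $h_j = 3 \cdot 2^{n_j}$. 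Since $M_2$ and $\overline{M_2}$ can enter $\sigma^*(A_1)$ only through $\sigma^*(x^{7 \cdot 2^n})$ or $\sigma^*((x+1)^{7 \cdot 2^m})$, and always symmetrically in the pair $(M_2, \overline{M_2})$, a direct analogue of Corollary \ref{ifM2divsor} via Lemma \ref{upcriteres} forces $M_2^\ell \| A_1 \Leftrightarrow \overline{M_2}^\ell \| A_1$ with equal exponents. I then run the four cases of Proposition \ref{caseperfect} in parallel: if exactly one of $a, b$ has the form $7 \cdot 2^{(\cdot)}$ (analogs of Cases 2 and 3), the $M_2$-balance collapses to the impossible equality $2^n = 3 \cdot 2^{n_j}$; if both do (analog of Case 1), then $2^n + 2^m = 3 \cdot 2^{n_j}$ pins $\{n, m\} = \{n_j, n_j + 1\}$. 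Matching $A_1 = \sigma^*(A_1)$ next forces the exponents of $M_1, M_3, \overline{M_3}$ to be $2^{n_j + 1}, 2^{n_j}, 2^{n_j}$, and summing all seven contributions to $val_x(\sigma^*(A_1))$ yields $10 \cdot 2^{n_j}$, while $val_x(A_1) = a = 7 \cdot 2^{n_j + 1} = 14 \cdot 2^{n_j}$; this mismatch (whose $n_j = 0$ instance is precisely the polynomial $S_2$ of Examples \ref{upexamples}(ii)) contradicts Lemma \ref{upcriteres}.

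Part (iii) is then immediate: (i) and (ii) place $A$ in the hypothesis of Lemma \ref{oldresult2}, which directly produces $A$ or $\overline{A}$ in the form $B^{2^n}$ with $B \in \mathcal{P}_u$. The main technical obstacle is the exponent bookkeeping in the Case 1 count of part (ii): once the $3 \cdot 2^{n_j}$-exponents of $M_2, \overline{M_2}$ are forced into $A_1$, the unitary criterion cascades through $M_1, M_3, \overline{M_3}$, and one must carefully enumerate contributions to $val_x(\sigma^*(A_1))$ coming from $\sigma^*((x+1)^b)$, $\sigma^*(M_2^{h_j})$, $\sigma^*(\overline{M_2}^{3 \cdot 2^{n_j}})$, $\sigma^*(M_1^{2^{n_j + 1}})$, $\sigma^*(M_3^{2^{n_j}})$ and $\sigma^*(\overline{M_3}^{2^{n_j}})$ (with $\sigma^*(x^a)$ contributing $0$) in order to reach the $10$-vs.-$14$ discrepancy cleanly.
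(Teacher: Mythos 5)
Your proof follows the paper's argument essentially step for step: part (i) by the same coprimality-plus-indecomposability reduction as Lemma \ref{gcdMjsigmA1}, part (ii) by writing $h_j=2^{n_j}u_j$, invoking Lemma \ref{unitary1} and Lemma \ref{canadayperf}(ii) to force $u_j=1$ or $(u_j=3,\ P_j\in\{M_2,\overline{M_2}\})$, and then the same two-case analysis on whether one or both of $a,b$ has the form $7\cdot 2^{(\cdot)}$, ending at the polynomial $S_2$ of Examples \ref{upexamples}(ii); part (iii) from Lemma \ref{oldresult2}. The only difference is that you carry out the $10$-versus-$14$ valuation count explicitly where the paper simply cites Examples \ref{upexamples}, and your bookkeeping there is correct.
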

\begin{proof} The proof of (i) is analogous to that of Lemma \ref{gcdMjsigmA1}. The statement (iii) follows from (i), (ii) and Lemma \ref{oldresult2}.
We only sketch the proof of (ii). \\
Set $h_j = 2^{n_j} u_j$, where $u_j$ is odd and $n_j \geq 0$. \\
- Suppose that $P_j \not\in  \{M_2, \overline{M_2}\}$. If $u_j \geq 3$, then $\sigma(P_j^{u_j -1})$ and thus $\sigma^*(P_j^{h_j})$ are divisible by a non-Mersenne prime. It contradicts Lemma \ref{sigmaPjhj}.\\
- If $P_j \in  \{M_2, \overline{M_2}\}$ and if $u_j \geq 3$, then $u_j = 3$ and $(a$ or $b$ is of the form $7 \cdot 2^n$). Recall that $\sigma^*({M_2}^3) = (1+M_2) M_1 \overline{M_3}$ and $\sigma^*({\overline{M_2}}^{\ 3}) = (1+ \overline{M_2}) M_1 M_3$. 
We consider two cases. The first gives non unitary perfect polynomials whereas the second leads to a contradiction.\\
$\bullet$ Case 1: $a=7 \cdot 2^{n}$ and $b=7 \cdot 2^m$, with $n, m\geq 0$\\
One has ${M_2}^{\ell} \| A_1$ and ${\overline{M_2}}^{\ \ell} \| A_1$, with $\ell = 2^n + 2^m$. Neither $M_1$ nor $M_3$ divides $\sigma(x^a) \ \sigma((x+1)^b)$.\\
Thus, $3 \cdot 2^{n_j} = h_j= \ell = 2^n + 2^m$. So, $(n=m+1$ and $n_j=m)$ or $(m=n+1$ and $n_j=n)$.
Therefore, $({M_1}^{2})^{2^{n_j}}$, ${M_3}^{2^{n_j}}$ and ${\overline{M_3}}^{\ 2^{n_j}}$ divide $\sigma^*(M_2^{h_j}) \sigma^*({\overline{M_2}}^{\ h_j})$ and they divide $\sigma^*(A_1) = A_1$. Thus,
$A_1 ={S_2}^{2^m}$  or $A_1 ={\overline{S_2}}^{\ 2^n}$ where $S_2 = x^{14} (x+1)^7 {M_1}^2 {M_2}^3 {\overline{M_2}}^{\ 3} M_3 \overline{M_3}$. 
In both cases, $A_1$ is not unitary perfect because $S_2$ is not u.p (Examples \ref{upexamples}).\\
$\bullet$ Case 2: $a=7 \cdot 2^{n}$ and $(b=5 \cdot 2^m$ or $b=3 \cdot 2^m$), with $n, m\geq 0$\\
One has $\ell = 2^n$. So, we get the contradiction: $3\cdot 2^{n_j} = h_j= \ell = 2^n$.
\end{proof}

\section{Proof of Theorem \ref{result0}} \label{proof0}
We mainly prove Theorem \ref{result0} by contradiction (to Corollary \ref{sommaieven}). Lemma \ref{omegasigmM2h} states that $\sigma(M^{2h})$ is square-free, for any $h \in \N^*$.\\
\\
We set
$M=x^a(x+1)^b+1$, $U_{2h}=\sigma(\sigma(M^{2h}))$ and 
\begin{equation}\label{assume}
\text{$\sigma(M^{2h})= \displaystyle{\prod_{j\in J} {P_j}}$, $P_j = 1+x^{a_j}(x+1)^{b_j}$ irreducible, $P_i \not= P_j$ if $i \not= j.$}
\end{equation}
By Lemma \ref{p-reduction}, if there exists a prime divisor $p$ of $2h+1$ such that $\sigma(M^{p-1})$  is divisible by a non-Mersenne prime, then  $\sigma(M^{2h})$ is also divisible by a non-Mersenne. Therefore, it suffices to consider that $2h+1 = p$ is a prime number, except for $p = 3$ with $M \in \{M_2, \overline{M_2}\}$
(see Section \ref{caseM2}).

\subsection{Useful facts}\label{preliminaire}
For $S \in \F_2[x] \setminus \{0,1\}$, of degree $s$, we denote by $\alpha_l(S)$ the coefficient of $x^{s-l}$ in $S$, $0\leq l \leq s.$ One has: $\alpha_0(S) =1$.

\begin{lemma} [Lemmas 4.6 and 4.8 in \cite{Gall-Rahav-mersenn}] \label{omegasigmM2h}~\\
The polynomial $\sigma(M^{2h})$ is square-free and $M \not=M_1$.
\end{lemma}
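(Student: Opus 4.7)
The statement has two independent assertions, which I would tackle separately.

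For square-freeness of $\sigma(M^{2h})$, my plan is to invoke the characteristic-$2$ criterion that $f \in \F_2[x]$ is square-free iff $\gcd(f,f') = 1$. Treating $\sigma(y^{2h}) = 1 + y + \cdots + y^{2h}$ as a polynomial in an auxiliary variable $y$, only the odd-exponent monomials survive differentiation, so the derivative with respect to $y$ is $1 + y^2 + \cdots + y^{2h-2} = \sigma(y^{h-1})^2$. Substituting $y = M$ and applying the chain rule yields
\[
\sigma(M^{2h})' \;=\; M' \cdot \sigma(M^{h-1})^2,
\]
and I reduce to showing that $\sigma(M^{2h})$ is coprime to both $M'$ and $\sigma(M^{h-1})$. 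The first coprimality is straightforward: since $M = 1 + x^a(x+1)^b$, the derivative $M'$ is supported only on $x$ and $x+1$ (the degenerate case $M' = 0$, in which $a$ and $b$ are both even, is excluded by $M$ being irreducible, since then $M$ would be a square), while $\sigma(M^{2h})(0) = \sigma(M^{2h})(1) = 2h+1 \equiv 1 \pmod 2$ rules out $x$ and $x+1$ as divisors of $\sigma(M^{2h})$. For the second coprimality I would exploit the identity $\sigma(M^{2h}) = (1 + M^h)\sigma(M^{h-1}) + M^{2h}$ to rewrite $\gcd(\sigma(M^{2h}), \sigma(M^{h-1}))$ as $\gcd(M^{2h}, \sigma(M^{h-1}))$, then note that any irreducible $P \mid M$ satisfies $\sigma(M^{h-1}) \equiv 1 \pmod P$.

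For the assertion $M \neq M_1$, I would argue by contradiction: if $M = M_1$, then (\ref{assume}) forces $\sigma(M_1^{2h})$ to be a product of distinct Mersenne primes, and I aim to exhibit a non-Mersenne irreducible divisor. By Lemma \ref{p-reduction} it suffices to locate a prime divisor $p$ of $2h+1$ for which $\sigma(M_1^{p-1})$ has a non-Mersenne irreducible factor. When $3 \mid 2h+1$ the choice $p = 3$ works immediately: $\sigma(M_1^2) = M_1^2 + M_1 + 1 = x^4 + x + 1$ is irreducible, and an inspection of the three pairs $(a,b)$ with $a+b = 4$ shows it is not of Mersenne form.

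The main obstacle is the complementary case, in which every prime divisor of $2h+1$ is at least $5$. There one must analyze $\sigma(M_1^{p-1}) = \Phi_p(M_1)$ for such a prime $p$ and force a non-Mersenne irreducible factor. My plan is to decompose $\Phi_p(M_1)$ into $\F_2$-irreducible factors using the Frobenius action on primitive $p$-th roots of unity, and then combine degree counts with the rigidity of the Mersenne shape $1 + x^a(x+1)^b$ (each degree admitting only finitely many candidate pairs $(a,b)$) to conclude that at least one factor must fail to be Mersenne; this is the technically heavy part, and the argument I would model closely on the proof of \cite[Lemma 4.8]{Gall-Rahav-mersenn}.
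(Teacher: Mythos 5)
First, note that the paper itself does not prove this lemma: it is imported wholesale as Lemmas 4.6 and 4.8 of \cite{Gall-Rahav-mersenn}, so there is no internal argument to compare against. Your proof of the first assertion is complete and correct as written: the identity $\sigma(M^{2h})' = M'\cdot\sigma(M^{h-1})^2$, the fact that $M'$ is a product of powers of $x$ and $x+1$ (with $M'\neq 0$ because $M$ irreducible cannot be a square) while $\sigma(M^{2h})(0)=\sigma(M^{2h})(1)=2h+1\equiv 1 \pmod 2$, and the reduction $\gcd(\sigma(M^{2h}),\sigma(M^{h-1}))=\gcd(M^{2h},\sigma(M^{h-1}))=1$ via $\sigma(M^{2h})=(1+M^h)\sigma(M^{h-1})+M^{2h}$ together give $\gcd(f,f')=1$, which over the perfect field $\F_2$ does imply square-freeness. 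This is a self-contained proof of the first half, which is more than the paper provides.

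The genuine gap is in the second assertion. The claim $M\neq M_1$ means that, under the standing hypothesis (\ref{assume}) that $\sigma(M^{2h})$ is a product of Mersenne primes, $M$ cannot be $M_1$; equivalently, $\sigma(M_1^{2h})$ has a non-Mersenne prime factor for every $h\in\N^*$. You settle this only when $3\mid 2h+1$, using $\sigma(M_1^2)=x^4+x+1$ (which is indeed irreducible and not of the form $1+x^a(x+1)^b$) together with Lemma \ref{p-reduction}. For $2h+1$ coprime to $3$ — which covers infinitely many $h$ — you only announce a strategy (factor $\Phi_p(M_1)$ through the Frobenius orbits on $p$-th roots of unity and rule out the Mersenne shape by degree counts) and explicitly defer its execution to the proof of \cite[Lemma 4.8]{Gall-Rahav-mersenn}. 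That deferred case is where the real content of the assertion lies: already for $p=5$ one must show that $\Phi_5(M_1)$ is irreducible of degree $8$ (via a trace computation for $x^2+x=\zeta+1$ over $\F_{16}$) and then invoke the nonexistence of Mersenne primes of degree divisible by $8$, and the general $p$ requires a nontrivial orbit analysis. As a standalone proof, therefore, the proposal establishes the square-freeness in full but only a special case of $M\neq M_1$.
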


\begin{lemma} [Theorem 1.4 in \cite{Gall-Rahav-mersenn}] \label{oldresult3}
Let $h\in \N^*$ be such that $p=2h+1$ is prime and let $M$ be a Mersenne prime such that $M \not\in \{M_2, \overline{M_2}\}$ and $\omega(\sigma(M^{2h})) = 2$. Then, $\sigma(M^{2h})$ is divisible by a non-Mersenne prime.
\end{lemma}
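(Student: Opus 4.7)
Argue by contradiction: assume $\sigma(M^{2h})$ is only divisible by Mersenne primes. Combined with Lemma~\ref{omegasigmM2h} (square-freeness) and the hypothesis $\omega(\sigma(M^{2h})) = 2$, we may write $\sigma(M^{2h}) = PQ$ for distinct Mersenne primes $P, Q$, and set $M+1 = x^a(x+1)^b$, $P+1 = x^{a_1}(x+1)^{b_1}$, $Q+1 = x^{a_2}(x+1)^{b_2}$ with $a,b,a_i,b_i \geq 1$. The fundamental structural input is the identity
\[
U_{2h} = \sigma(PQ) = (P+1)(Q+1) = x^{a_1+a_2}(x+1)^{b_1+b_2},
\]
so the second iterate $\sigma\circ\sigma$ of $M^{2h}$ is supported only on $x$ and $x+1$; combining this with the degree identity $a_1+b_1+a_2+b_2 = (p-1)(a+b)$ (where $p = 2h+1$) gives the first batch of rigid constraints.

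Next, exploit the cyclotomic identity $\sigma(M^{p-1}) = \Phi_p(M)$, valid since $p$ is prime, and factor $\Phi_p = \prod_{i=1}^r f_i$ into irreducibles over $\F_2[y]$, each of degree $d = ord_p(2)$ and with $r = (p-1)/d$. Since $\Phi_p(M) = \prod_i f_i(M) = PQ$ has exactly two distinct prime factors, each with multiplicity one, and every $f_i(M)$ has positive degree in $x$, a multiplicity count forces $r \in \{1,2\}$. In the easy case $r = 2$, label $f_1(M) = P$ and $f_2(M) = Q$; since $\Phi_p(0) = 1$ forces $f_1(0) = 1$, we have $f_1(y) + 1 = y\,g_1(y)$ for some nonzero $g_1 \in \F_2[y]$, giving $P + 1 = f_1(M) + 1 = M\,g_1(M)$. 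Then the irreducible $M$ would divide $x^{a_1}(x+1)^{b_1}$, which is impossible because $\deg M = a+b \geq 2$ and $M(0) = M(1) = 1$ ensure $M \notin \{x, x+1\}$.

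The remaining case $r = 1$ — where $\Phi_p$ is irreducible over $\F_2[y]$ (equivalently, $2$ is a primitive root modulo $p$) and $\Phi_p(M) = PQ$ is a nontrivial factorization of the composition — is the main obstacle. The plan is to exploit
\[
PQ + 1 = \sigma(M^{p-1}) + 1 = M \cdot \sigma(M^{p-2}),
\]
combined with the direct expansion $PQ + 1 = x^{a_1}(x+1)^{b_1} + x^{a_2}(x+1)^{b_2} + x^{a_1+a_2}(x+1)^{b_1+b_2}$, so that the irreducible $M$ must divide this three-term sum (its coprimality with $x$ and $x+1$ will pull $M$ into the cofactor). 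A careful $x$-adic and $(x+1)$-adic valuation analysis of the three-term sum — treating separately the generic situation where $\min(a_1,a_2)$ and $\min(b_1,b_2)$ are realized without cancellation, and the degenerate subcases $a_1 = a_2$ or $b_1 = b_2$ where coefficient cancellation shifts the valuation upward — together with the degree identity and the divisibility by $M$, is designed to force $(a,b) \in \{(1,2),(2,1)\}$, i.e., $M \in \{M_2, \overline{M_2}\}$, which are excluded by hypothesis. The bulk of the work, and the principal technical obstacle, lies in this intricate valuation case analysis for $r = 1$.
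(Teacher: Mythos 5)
First, a point of comparison: the paper does not prove this lemma at all --- it is imported verbatim as Theorem 1.4 of \cite{Gall-Rahav-mersenn}, so there is no in-paper argument to measure yours against. Judged on its own merits, your cyclotomic reduction is sound as far as it goes: writing $\sigma(M^{p-1}) = \Phi_p(M)$ and factoring $\Phi_p = \prod_{i=1}^r f_i$ into irreducibles of degree $ord_p(2)$ over $\F_2$, the pairwise coprimality of the $f_i(M)$ does force $r \le 2$, and your $r=2$ case is complete and clean: $f_1(0)=1$ gives $M \mid f_1(M)+1 = P+1 = x^{a_1}(x+1)^{b_1}$, impossible for an odd irreducible of degree $\ge 2$. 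This genuinely disposes of all primes $p$ with $ord_p(2)=(p-1)/2$, e.g.\ $p=7$.

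The gap is that the case $r=1$ (i.e.\ $2$ a primitive root mod $p$, covering $p=3,5,11,13,\dots$) is left as a ``plan,'' and that is precisely where the entire content of the lemma lives: for $p=3$ and $M=M_2$ one has $\sigma(M_2^{\,2})=M_1\overline{M_3}$, an actual product of two Mersenne primes, so any correct argument for $r=1$ must actively invoke the hypothesis $M\notin\{M_2,\overline{M_2}\}$ and cannot be a uniform formal computation. You acknowledge that ``the bulk of the work'' lies in the ``intricate valuation case analysis,'' but you carry out none of its branches, and the mechanism you propose is doubtful: $M$ is coprime to $x$ and $x+1$, so its divisibility of the three-term sum $x^{a_1}(x+1)^{b_1}+x^{a_2}(x+1)^{b_2}+x^{a_1+a_2}(x+1)^{b_1+b_2}$ contributes nothing to the $x$-adic or $(x+1)$-adic valuations you intend to compare, and it is not explained how those valuations would ever pin down $(a,b)$. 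As written, this is a proof of a proper sub-case plus a statement of intent for the main case. For what it is worth, the machinery the authors deploy in the analogous situations of Section 3 --- Corollary \ref{sommaieven} together with the coefficient functionals $\alpha_l$, used to derive $\alpha_3(U_{2h})=1$ against the fact that $U_{2h}$ is a square --- suggests the intended route in the source is a coefficient-parity argument on $U_{2h}=x^{a_1+a_2}(x+1)^{b_1+b_2}$ rather than a valuation analysis of $PQ+1$.
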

The lemma below generalizes Lemma 4.10 in \cite{Gall-Rahav-mersenn} (with an analogous proof).

\begin{lemma} \label{p-reduction}
If $k$ is a divisor $($prime or not$)$ of $2h+1$, then $\sigma(M^{k-1})$ divides $\sigma(M^{2h})$.
\end{lemma}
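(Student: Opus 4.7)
The plan is to use the elementary geometric-series identity
$$1+Y+Y^2+\cdots+Y^{m-1}=\frac{Y^m-1}{Y-1},$$
applied with $Y=M$ and with $Y=M^k$, and then match the two expressions.

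More concretely, I would first write $2h+1=km$ for some positive integer $m$ (possible by hypothesis) and recall that in $\F_2[x]$ one has
$$\sigma(M^{n-1})=1+M+M^2+\cdots+M^{n-1}=\frac{M^n+1}{M+1}$$
for every $n\in\N^*$, since we are in characteristic $2$. Applying this identity with $n=2h+1=km$ gives
$$\sigma(M^{2h})=\frac{M^{km}+1}{M+1}.$$

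Next I would use the factorization
$$M^{km}+1=(M^k+1)\bigl(1+M^k+M^{2k}+\cdots+M^{(m-1)k}\bigr),$$
which is the same geometric-series identity with $Y$ replaced by $M^k$. Dividing both sides by $M+1$ and recognising the quotient $(M^k+1)/(M+1)$ as $\sigma(M^{k-1})$ yields the multiplicative decomposition
$$\sigma(M^{2h})=\sigma(M^{k-1})\cdot\bigl(1+M^k+M^{2k}+\cdots+M^{(m-1)k}\bigr),$$
which exhibits $\sigma(M^{k-1})$ as an explicit divisor of $\sigma(M^{2h})$ in $\F_2[x]$, as required.

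There is no real obstacle: the statement is a direct consequence of the fact that $Y^m-1$ is divisible by $Y-1$ in any commutative ring, applied with $Y=M^k$, together with the characteristic-$2$ identification of $\sigma(M^{n-1})$ with $(M^n+1)/(M+1)$. The only thing to double-check is that the formula $\sigma(M^{n-1})=(M^n+1)/(M+1)$ is being used with the right exponent shift (so that $n=km$ corresponds to the exponent $2h=n-1=km-1$), but this is a purely bookkeeping matter and the identity then reads off cleanly.
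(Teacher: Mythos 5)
Your proof is correct and is exactly the standard geometric-series factorization $M^{km}+1=(M^k+1)(1+M^k+\cdots+M^{(m-1)k})$ divided through by $M+1$; the paper itself omits the proof, merely noting that it is analogous to Lemma 4.10 of the cited reference (which treats prime divisors), and your argument is precisely that analogous generalization. No gaps.
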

We sometimes apply Lemmas \ref{lesalfal} and \ref{alfasigmM2h} without explicit mentions.

\begin{lemma} \label{lesalfal}
Let $S \in \F_2[x]$ be such that $s = \deg(S) \geq 1$ and $l,t,r,r_1,\ldots, r_k \in \N$ be such that $r_1>\cdots >r_k$, $t \leq k,
r_1-r_t \leq l \leq r \leq s.$  Then\\
(i) $\alpha_l[(x^{r_1} + \cdots +x^{r_k})S] = \alpha_l(S) + \alpha_{l-(r_1-r_2)}(S)+\cdots + \alpha_{l-(r_1-r_t)}(S)$.\\
(ii) $\alpha_l(\sigma(S))=\alpha_l(S)$ if any divisor of $S$ has degree at least $r+1$.
\end{lemma}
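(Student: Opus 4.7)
My plan is to prove both parts by straightforward coefficient bookkeeping, treating (i) as a pure polynomial-arithmetic identity and (ii) as a degree estimate on the non-leading part of $\sigma(S)$.

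For part (i), I would expand
\[
(x^{r_1}+\cdots+x^{r_k})\,S \;=\; \sum_{j=1}^{k} x^{r_j} S,
\]
and track the coefficient of $x^{(r_1+s)-l}$ in each summand. Since $x^{r_j}S$ has degree $r_j+s$, its coefficient of $x^{(r_1+s)-l}$ equals $\alpha_{l-(r_1-r_j)}(S)$ when $0 \le l-(r_1-r_j) \le s$, and is zero otherwise. The upper bound follows from $l \le r \le s$. For the lower bound, the hypothesis $r_1-r_t \le l$ combined with the strict inequalities $r_1>\cdots>r_k$ shows that exactly the indices $j=1,\dots,t$ contribute (larger indices produce $r_1-r_j>l$, so a vanishing term), the index $t$ being implicitly the last one with $r_1-r_t\le l$. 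Summing the $t$ surviving contributions yields precisely $\alpha_l(S)+\alpha_{l-(r_1-r_2)}(S)+\cdots+\alpha_{l-(r_1-r_t)}(S)$.

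For part (ii), I would write
\[
\sigma(S) \;=\; S \,+\, \sum_{\substack{D\mid S\\ D\neq S}} D.
\]
Reading the hypothesis as: every nontrivial irreducible factor of $S$ has degree at least $r+1$, any proper divisor $D\neq S$ satisfies that $S/D$ is a nonempty product of such factors, so $\deg(S/D)\ge r+1$ and hence $\deg D \le s-r-1$. Therefore $\sigma(S)-S$ is a polynomial of degree at most $s-r-1$, so its coefficient of $x^{s-l}$ vanishes for all $l$ with $0\le l\le r$, giving $\alpha_l(\sigma(S))=\alpha_l(S)$ on the required range.

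Neither step presents a real obstacle; the content is combinatorial. The only delicate points are matching the implicit convention in (i) that $t$ is the largest index with $r_1-r_t\le l$ (so that the enumerated sum exactly captures the nonzero contributions) and reading the divisor hypothesis in (ii) correctly, so that every summand of $\sigma(S)-S$ is bounded in degree by $s-r-1$.
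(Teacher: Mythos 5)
Your argument is correct and is essentially the paper's own proof, which simply notes that (i) follows from the definition of $\alpha_l$ and that (ii) follows from writing $\sigma(S)=S+T$ with $\deg(T)\leq \deg(S)-r-1$; you have merely filled in the routine coefficient bookkeeping, including the correct (implicit) reading that $t$ indexes the last surviving term in (i) and that the divisor hypothesis in (ii) refers to nontrivial factors.
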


\begin{proof}
The equality in (i) (resp. in (ii)) follows from the definition of $\alpha_l$ (resp. from the fact: $\sigma(S) = S + T$, where $\deg(T) \leq \deg(S)-r-1$).
\end{proof}

\begin{corollary} \label{sommaieven}
(i) The integers $\displaystyle{u=\sum_{j \in J} a_j}$ and $\displaystyle{v=\sum_{j \in J} b_j}$ are both even.\\
(ii) The polynomial $U_{2h}$ splits $($over $\F_2)$ and it is a square.\\
(iii) The polynomial $\sigma(M^{2h})$ is reducible.
\end{corollary}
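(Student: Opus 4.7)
The plan is to establish (i) by computing the subleading coefficient $\alpha_1(\sigma(M^{2h}))$ in two different ways and comparing; (ii) and (iii) will then follow quickly from (i) by, respectively, multiplicativity of $\sigma$ and an irreducibility argument.

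For part (i), I would first compute $\alpha_1(\sigma(M^{2h}))$ from the defining expansion $\sigma(M^{2h})=1+M+M^2+\cdots+M^{2h}$. Writing $M=x^{a+b}+b\,x^{a+b-1}+(\text{lower})$, the binomial expansion in characteristic $2$ yields $\alpha_1(M^{2h})=2hb\bmod 2=0$, and every $M^k$ with $k<2h$ has degree at most $2h(a+b)-(a+b)\leq 2h(a+b)-2$, so these summands do not affect the coefficient at degree $2h(a+b)-1$. Hence $\alpha_1(\sigma(M^{2h}))=0$. Next, I would recompute $\alpha_1$ directly from the assumed factorization $\sigma(M^{2h})=\prod_{j\in J}(1+x^{a_j}(x+1)^{b_j})$: the top-degree monomial in the expansion of the product is $x^u(x+1)^v=x^{u+v}+v\,x^{u+v-1}+\cdots$, while every other monomial has degree at most $(u+v)-(a_k+b_k)\leq u+v-2$ for some $k$ (using $a_j,b_j\geq 1$, which follows from $\sigma(M^{2h})(0)=\sigma(M^{2h})(1)=2h+1=1$ in $\F_2$, forcing $P_j\notin\{x,x+1\}$). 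Thus $\alpha_1(\sigma(M^{2h}))=v\bmod 2$, so $v$ is even. The same argument applied to $\overline{M}$ (whose analogous factorization swaps $a_j\leftrightarrow b_j$) then shows $u$ is also even.

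For part (ii), I would use multiplicativity of $\sigma$ combined with the squarefreeness of $\sigma(M^{2h})$ from Lemma \ref{omegasigmM2h}:
\[
U_{2h}=\sigma\Bigl(\prod_{j\in J}P_j\Bigr)=\prod_{j\in J}(1+P_j)=\prod_{j\in J}x^{a_j}(x+1)^{b_j}=x^u(x+1)^v.
\]
By (i) both $u$ and $v$ are even, so $U_{2h}=\bigl(x^{u/2}(x+1)^{v/2}\bigr)^2$ is a square and splits over $\F_2$.

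For part (iii), suppose for contradiction that $\sigma(M^{2h})$ is irreducible, so $|J|=1$ and $\sigma(M^{2h})=P_1=1+x^{a_1}(x+1)^{b_1}$. Then $u=a_1$ and $v=b_1$ are both even by (i), so $P_1=1+\bigl(x^{a_1/2}(x+1)^{b_1/2}\bigr)^2=\bigl(1+x^{a_1/2}(x+1)^{b_1/2}\bigr)^2$ is a perfect square of positive degree, contradicting the irreducibility of $P_1$. The main obstacle is part (i): one must ensure that the two coefficient computations isolate $\alpha_1$ without spurious contributions, which ultimately reduces to the uniform bounds $a+b\geq 2$ and $a_j+b_j\geq 2$, inherent to Mersenne primes.
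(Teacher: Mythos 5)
Your proposal is correct, and for parts (ii) and (iii) it is essentially the paper's own argument: (ii) is the same one-line computation $U_{2h}=\prod_j(1+P_j)=x^u(x+1)^v$ via multiplicativity, and your (iii) (both $a_1,b_1$ even would force $P_1=\bigl(1+x^{a_1/2}(x+1)^{b_1/2}\bigr)^2$, contradicting irreducibility) is just the contrapositive packaging of the paper's remark that $U_{2h}=1+Q$ cannot be a square when $Q$ is irreducible. The only genuine difference is in (i): the paper simply cites Corollary 4.9 of \cite{Gall-Rahav-mersenn}, whereas you supply a self-contained proof by computing the subleading coefficient $\alpha_1(\sigma(M^{2h}))$ twice --- once from $\sigma(M^{2h})=1+M+\cdots+M^{2h}$ (giving $0$, since $M^{2h}$ is a square and the lower powers have degree deficit at least $a+b\geq 2$) and once from the assumed factorization (giving $v\bmod 2$), then applying the automorphism $x\mapsto x+1$ to get $u$. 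This is exactly the coefficient-comparison technique the paper itself uses elsewhere in Section 3 (Lemmas \ref{lesalfal} and \ref{alfasigmM2h}), so your version buys self-containedness at essentially no cost; the degree bounds you flag ($a+b\geq 2$ and $a_j+b_j\geq 2$, the latter automatic since each $P_j$ is by assumption a Mersenne prime with $a_j,b_j\geq 1$) are indeed the only points that need checking, and they hold.
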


\begin{proof}
(i) See \cite[Corollary 4.9]{Gall-Rahav-mersenn}. For (ii), Assumption (\ref{assume}) implies that $\displaystyle{U_{2h}= \sigma(\sigma(M^{2h}))=\sigma(\prod_{j\in J} {P_j}) =  \prod_{j \in J} x^{a_j}(x+1)^{b_j} = x^u(x+1)^v},$ where $u$ and $v$ are both even.\\
(iii) If $\sigma(M^{2h}) = Q$ is irreducible, then $U_{2h} = 1+Q$ is not a square.
\end{proof}

\begin{lemma} \label{alfasigmM2h}
One has $\alpha_l(\sigma(M^{2h}))=\alpha_l(M^{2h})$ if $l \leq a+b-1$ and
$\alpha_l(\sigma(M^{2h})) = \alpha_l(M^{2h}+M^{2h-1})$ if $a+b \leq l \leq 2(a+b)-1$.
\end{lemma}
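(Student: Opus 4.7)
The plan is to expand $\sigma(M^{2h}) = M^{2h} + M^{2h-1} + M^{2h-2} + \cdots + M + 1$ and track the degrees of the summands. Writing $d = \deg(M) = a+b$, each summand $M^{2h-k}$ has degree $(2h-k)d = 2hd - kd$, and the top summand $M^{2h}$ cannot be cancelled by any other summand, so $\deg(\sigma(M^{2h})) = 2hd$. By the definition of $\alpha_l$, computing $\alpha_l(\sigma(M^{2h}))$ amounts to reading off the coefficient of $x^{2hd - l}$ in the above expansion.

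Next I would observe the elementary support restriction: a summand $M^{2h-k}$ can affect the coefficient of $x^{2hd - l}$ only if $(2h-k)d \geq 2hd - l$, that is, only if $l \geq kd$. Hence, for $l \leq a+b-1$ (so $l < d$), the indices $k \geq 1$ all fail this inequality, and only $k = 0$ contributes; this gives $\alpha_l(\sigma(M^{2h})) = \alpha_l(M^{2h})$. For $a+b \leq l \leq 2(a+b)-1$ (so $d \leq l < 2d$), the indices $k = 0$ and $k = 1$ contribute, while $k \geq 2$ would require $l \geq 2d$; this yields $\alpha_l(\sigma(M^{2h})) = \alpha_l(M^{2h} + M^{2h-1})$. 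I would note that $M^{2h} + M^{2h-1}$ still has degree $2hd$ since $\deg(M^{2h-1}) < 2hd$, so the reference degree implicit in the notation $\alpha_l(M^{2h} + M^{2h-1})$ matches that of $\sigma(M^{2h})$, and the equality is between coefficients of $x^{2hd-l}$ on both sides.

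I do not expect any real obstacle: the statement is a purely formal degree-comparison fact, and is essentially the specialization of Lemma \ref{lesalfal}-(ii) to $S = M^{2h}$, where the gap between the two highest-degree summands of $\sigma(M^{2h})$ is exactly $d = a+b$.
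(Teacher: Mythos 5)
Your argument is correct and is essentially the paper's own proof: the paper writes $\sigma(M^{2h}) = M^{2h} + M^{2h-1} + T$ with $\deg(T) \leq (a+b)(2h-2)$ and invokes Lemma \ref{lesalfal}-(ii), which is exactly the summand-by-summand degree comparison you carry out explicitly. Your added remark that the reference degree for $\alpha_l(M^{2h}+M^{2h-1})$ agrees with that of $\sigma(M^{2h})$ is a worthwhile check, but there is no substantive difference in method.
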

\begin{proof}
Since $\sigma(M^{2h}) = M^{2h} + M^{2h-1} + T$, with $\deg(T) \leq (a+b)(2h-2)=2h(a+b)-2(a+b)$, Lemma \ref{lesalfal}-(ii) implies that
$\alpha_l(\sigma(M^{2h})) = \alpha_l(M^{2h})$ if $l \leq a+b-1$ and
$\alpha_l(\sigma(M^{2h})) = \alpha_l(M^{2h}+M^{2h-1})$
if $a+b\leq l \leq 2(a+b)-1$.
\end{proof}

\begin{lemma} \label{phiandirreduc}
Denote by $N_2(m)$ the number of irreducible polynomials over $\F_2$, of degree $m \geq 1$. Then\\
(i) $\displaystyle{N_2(m) \geq \frac{2^m - 2(2^{m/2} - 1)}{m}}$,\\
(ii) $\varphi(m) < N_2(m)$ if $m\geq 4$, where $\varphi$ is the Euler totient function,\\
(iii) For each $m \geq 4$, there exists an irreducible polynomial of degree $m$, which is not a Mersenne prime.
\end{lemma}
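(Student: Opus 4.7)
I would prove the three parts in order. For (i), the plan is to invoke the standard identity $2^m = \sum_{d \mid m} d\, N_2(d)$ (counting elements of $\F_{2^m}$ by the degree of their minimal polynomial over $\F_2$), apply M\"obius inversion to get $m N_2(m) = \sum_{d \mid m} \mu(m/d) 2^d$, and isolate the $d = m$ term. Using $|\mu| \leq 1$ together with the crude bound $\sum_{d \mid m,\, d < m} 2^d \leq \sum_{j=1}^{\lfloor m/2 \rfloor} 2^j = 2^{\lfloor m/2 \rfloor + 1} - 2 \leq 2(2^{m/2} - 1)$ then yields the claimed $m N_2(m) \geq 2^m - 2(2^{m/2} - 1)$.

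For (ii), since $\varphi(m) \leq m$, it suffices by (i) to establish $m^2 < 2^m - 2(2^{m/2} - 1)$. A direct estimate shows this for all $m \geq 6$ (the exponential dominates comfortably once $2^m \geq 2m^2 + 2^{m/2+1}$), while $m = 4$ and $m = 5$ need separate attention: the explicit formulas $N_2(4) = (2^4 - 2^2)/4 = 3$ and $N_2(5) = (2^5 - 2)/5 = 6$ give $\varphi(4) = 2 < 3$ and $\varphi(5) = 4 < 6$.

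For (iii), the key step is to bound the number of Mersenne primes of degree $m$ by $\varphi(m)$ and then invoke (ii). Each Mersenne prime $P = 1 + x^a(x+1)^b$ of degree $m = a+b$ determines the pair $(a, b)$ uniquely (via the $x$- and $(x+1)$-valuations of $P - 1$), and irreducibility forces $\gcd(a, b) = 1$: otherwise, writing $d = \gcd(a, b) > 1$ and $y = x^{a/d}(x+1)^{b/d}$, we would have $P = 1 + y^d$, which factors in $\F_2[y]$ as $(1 + y^{d/2})^2$ when $d$ is even (Frobenius) or as $(1 + y)(1 + y + \cdots + y^{d-1})$ when $d > 1$ is odd, and substituting back gives a nontrivial factorization of $P$ in $\F_2[x]$. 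The admissible pairs $(a, b)$ with $a + b = m$, $a, b \geq 1$, $\gcd(a, b) = 1$ correspond bijectively to the integers $a \in \{1, \ldots, m-1\}$ coprime to $m$, so there are exactly $\varphi(m)$ of them. By (ii), at most $\varphi(m) < N_2(m)$ Mersenne primes of degree $m$ exist, so at least one irreducible of degree $m$ fails to be a Mersenne prime.

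The main obstacle is precisely the tightness of the bound in (i) at the boundary: the inequality $m^2 < 2^m - 2(2^{m/2} - 1)$ just fails at $m = 4$ and $m = 5$, so these two exceptional cases must be handled individually by direct computation with the exact M\"obius formula rather than by the general estimate, and the argument of (iii) must then combine this case-by-case check with the general bound.
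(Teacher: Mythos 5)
Your argument is correct and follows essentially the same route as the paper: the paper cites Lidl--Niederreiter for the bound in (i) (which is precisely the M\"obius-inversion estimate you reproduce), handles $m=4,5$ by direct computation and $m\geq 6$ by the same inequality $m^2 < 2^m - 2(2^{m/2}-1)$, and in (iii) bounds the number of Mersenne primes of degree $m$ by $\varphi(m)$ using $\gcd(c,d)=1$. The only difference is that you supply proofs of the two facts the paper merely cites or asserts (the counting bound and the coprimality of the exponents), both of which are correct.
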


\begin{proof}
(i) See Exercise 3.27, p. 142 in \cite{Rudolf}.\\
(ii) If $m \in \{4, 5\}$, then direct computations give  
$\varphi(4) = 2,\ N_2(4) = 3$ and $\varphi(5) = 4, \ N_2(5) = 6$.\\
Now, suppose that $m \geq 6$. Consider the function $f(x)=2^x - 2(2^{x/2} - 1) -x^2$, for $x\geq 6$. The derivative of $f$ is a positive function. So,  $f(x) \geq f(6) > 0$ and $x < \displaystyle{\frac{2^x - 2(2^{x/2} - 1)}{x}}$.
Thus,
$\varphi(m) \leq m < \displaystyle{\frac{2^m - 2(2^{m/2} - 1)}{m}} \leq N_2(m)$.\\
(iii) We remark that if $1+x^c(x+1)^d$ is a Mersenne prime, then $\gcd(c,d)=1$. So, $\gcd(c,c+d)=1$.
Therefore,
the set ${\mathcal{M}}_m$ of Mersenne primes of degree $m$ is a subset of $\{ x^c(x+1)^{m-c} +1: 1 \leq c \leq m, \ \gcd(c,m)=1\}.$
Thus, $$\# {\mathcal{M}}_m \leq \# \{c: 1\leq c \leq m,\ \gcd(c,m)=1\} = \varphi(m).$$
Hence, 
there exist at least $N_2(m) - \varphi(m)$ irreducible non-Mersenne polynomials, with $N_2(m) - \varphi(m) \geq 1$, by (ii).
\end{proof}

\begin{lemma} \label{ord(2)divise}
For any $j \in J$, $ord_p(2)$ divides $a_j+b_j = \deg(P_j)$.
\end{lemma}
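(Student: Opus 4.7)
The plan is to exploit the telescoping identity
\[
\sigma(M^{2h}) \;=\; 1 + M + M^2 + \cdots + M^{2h} \;=\; \frac{M^{p} + 1}{M+1},
\]
which holds in $\F_2[x]$ with $p = 2h+1$. Since each $P_j$ divides $\sigma(M^{2h})$, it therefore divides $M^{p} + 1$. Reducing modulo $P_j$, this gives $M^{p} \equiv 1 \pmod{P_j}$, so the multiplicative order $e$ of the residue class of $M$ in the cyclic group $(\F_2[x]/P_j)^{\times}$ divides the prime $p$. Hence $e = 1$ or $e = p$.

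The case $e=1$ must be ruled out: it would force $P_j \mid M + 1 = x^{a}(x+1)^{b}$, but $P_j$ is a Mersenne prime (of the form $1 + x^{a_j}(x+1)^{b_j}$ with $a_j, b_j \geq 1$), hence $P_j(0) = P_j(1) = 1$, so $P_j$ is coprime to $x(x+1)$. This contradiction leaves $e = p$. Since $e$ divides the order of $(\F_2[x]/P_j)^{\times}$, which equals $2^{\deg(P_j)} - 1 = 2^{a_j + b_j} - 1$, we obtain $p \mid 2^{a_j+b_j} - 1$. By definition of the multiplicative order of $2$ modulo $p$, this means $ord_p(2)$ divides $a_j + b_j$, as claimed.

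There is essentially no obstacle here; the only subtlety is the verification that $a_j, b_j$ are strictly positive (equivalently, that $P_j$ is coprime to $x(x+1)$), which is needed to exclude $e=1$. This follows either directly from the phrasing of assumption~\eqref{assume} together with the paper's convention on Mersenne primes, or by evaluating $\sigma(M^{2h})$ at $0$ and at $1$: since $M(0) = M(1) = 1$ and $2h+1$ is odd, $\sigma(M^{2h})(0) = \sigma(M^{2h})(1) = 1$, so neither $x$ nor $x+1$ appears among the $P_j$.
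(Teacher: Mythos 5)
Your proof is correct. It does not follow the paper's own route: the paper disposes of this lemma in two lines by citing Lemma 4.13 of \cite{Gall-Rahav-mersenn}, which gives $p \mid 2^d-1$ for $d=\gcd_{j\in J}(a_j+b_j)$, and then concludes $ord_p(2)\mid d \mid a_j+b_j$. You instead argue directly and self-containedly: $P_j \mid \sigma(M^{2h}) \mid M^p+1$, so the class of $M$ in $\F_2[x]/(P_j)\cong \F_{2^{a_j+b_j}}$ has order dividing the prime $p$; the order cannot be $1$ since $P_j$, being odd, cannot divide $M+1=x^a(x+1)^b$; hence $p$ divides $2^{a_j+b_j}-1$ and $ord_p(2)\mid a_j+b_j$. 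This is the same circle of ideas the paper deploys elsewhere (Lemmas \ref{reduction2} and \ref{PandQ}), applied to each $P_j$ individually rather than routed through the gcd of the degrees; the two conclusions are equivalent via $\gcd(2^m-1,2^n-1)=2^{\gcd(m,n)}-1$, and your version has the merit of not depending on the external reference. The points you flag are handled correctly: $a_j,b_j\ge 1$ is part of assumption (\ref{assume}) (and your evaluation at $0$ and $1$ is an independent check), the invertibility of $M$ modulo $P_j$ is forced by the congruence $M^p\equiv 1$ itself, and your reliance on the primality of $p=2h+1$ matches the standing reduction made at the start of Section \ref{proof0}.
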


\begin{proof}
Set $\displaystyle{d=\gcd_{i\in J} (a_i+b_i)}$. By Lemma 4.13 in \cite{Gall-Rahav-mersenn}, $p$ divides $2^d-1$. Thus, $ord_p(2)$ divides $d$.
\end{proof}

\begin{lemma} \label{primitive} {\rm{(\cite{Rudolf}, Chap. 2 and 3)}}~\\
Let $q=2^r-1$ be a Mersenne prime number. Then, any irreducible polynomial $P$ of degree $r$ is primitive.  In particular,
each root $\beta$ of $P$ is a primitive element of the field $\F_{2^r}$, so that $\beta$ is of order $q$ in  $\F_{2^r} \setminus \{0\}$.
\end{lemma}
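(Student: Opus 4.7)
The plan is to leverage that the multiplicative group $\F_{2^r}^{*} = \F_{2^r} \setminus \{0\}$ has order exactly $q = 2^r - 1$, which is prime by hypothesis. By Lagrange's theorem, every element of $\F_{2^r}^{*}$ then has order either $1$ or $q$, so proving $P$ primitive reduces to showing any root $\beta$ of $P$ is different from $1$.

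First, I would identify $\F_{2^r}$ with $\F_2[x]/(P)$: since $P$ is irreducible of degree $r$, any root $\beta$ of $P$ generates this field over $\F_2$ and in particular lies in $\F_{2^r}$. Second, I would note that the hypothesis that $q=2^r-1$ is prime forces $r \geq 2$ (the case $r=1$ gives $q=1$, which is not prime). Consequently $\beta \notin \F_2 = \{0,1\}$, since elements of $\F_2$ are roots only of polynomials of degree at most $1$. In particular $\beta \neq 1$.

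Applying Lagrange to $\beta \in \F_{2^r}^{*}$, its order divides $q$; as $q$ is prime and $\beta \neq 1$, the order must equal $q$. Hence $\beta$ generates $\F_{2^r}^{*}$, i.e., $\beta$ is a primitive element of $\F_{2^r}$. Since a primitive polynomial is by definition the minimal polynomial of a primitive element, $P$ is primitive, which proves the lemma.

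The argument has no real obstacle; the only point worth attention is excluding $\beta = 1$, and this follows immediately from $r \geq 2$. The statement could also be obtained by directly citing the standard fact that over $\F_2$ every irreducible polynomial of degree $r$ with $2^r-1$ prime is primitive (as recorded in Lidl--Niederreiter), but the self-contained Lagrange argument above is essentially the shortest possible proof.
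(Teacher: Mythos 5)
Your proof is correct: the paper gives no proof of this lemma at all, merely citing Lidl--Niederreiter, and your Lagrange argument (prime group order $q=2^r-1$, root $\beta\notin\F_2$ since $r\ge 2$, hence order exactly $q$) is exactly the standard reasoning behind that citation. Nothing is missing; the exclusion of $\beta=1$ is the only delicate point and you handle it properly.
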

\begin{lemma} \label{reduction2}
Let $P_i = 1+x^{a_i}(x+1)^{b_i}$ be a prime divisor of $\sigma(M^{p-1}),$ where $2^{a_i+b_i}-1 = p_i$ is a prime number.  Then, $p_i = p$ and $\sigma(M^{p-1})$
is divisible by any irreducible polynomial of degree $a_i+b_i$. Furthermore, at least one of those divisors is not a Mersenne prime if $a_i+b_i \geq 4$.
\end{lemma}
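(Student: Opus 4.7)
\textbf{Proof plan for Lemma \ref{reduction2}.} The starting identity is
$(1+M)\,\sigma(M^{p-1}) = 1 + M^p$ (in characteristic $2$). So if $P_i$ divides $\sigma(M^{p-1})$ and $\beta$ is a root of $P_i$ in $\F_{2^{a_i+b_i}}$, then $M(\beta)^p = 1$. I would first observe that $\beta \neq 0$ and $\beta \neq 1$ because $P_i(0) = P_i(1) = 1$; hence $M(\beta) = 1 + \beta^{a_i}(\beta+1)^{b_i} \neq 1$. Therefore the multiplicative order of $M(\beta)$ in $\F_{2^{a_i+b_i}}^{\times}$ is a nontrivial divisor of $p$, so (since $p$ is prime) equals $p$. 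By Lagrange this order also divides $|\F_{2^{a_i+b_i}}^{\times}| = p_i$, and primality of $p_i$ forces $p = p_i$.

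Once $p = p_i$ is known, I would prove that every irreducible $Q \in \F_2[x]$ of degree $a_i+b_i$ divides $\sigma(M^{p-1})$. Since $p_i = 2^{a_i+b_i}-1$ is a Mersenne prime, Lemma \ref{primitive} gives that $Q$ is primitive, so any root $\gamma$ of $Q$ has order $p_i = p$ in $\F_{2^{a_i+b_i}}^{\times}$; in particular $\gamma \notin \{0,1\}$, so the same argument as above shows $M(\gamma) \neq 1$ and its order in $\F_{2^{a_i+b_i}}^{\times}$ must be exactly $p$. Hence $M(\gamma)^p = 1$, i.e.\ $Q$ divides $1 + M^p = (1+M)\,\sigma(M^{p-1}) = x^a(x+1)^b\,\sigma(M^{p-1})$. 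Because $\deg(Q) = a_i+b_i \geq 2$, $Q$ is coprime to $x$ and to $x+1$, so $Q \mid \sigma(M^{p-1})$, as wanted.

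Finally, if $a_i + b_i \geq 4$, Lemma \ref{phiandirreduc}-(iii) supplies an irreducible polynomial of degree $a_i+b_i$ that is not a Mersenne prime; by the preceding paragraph it divides $\sigma(M^{p-1})$, giving the desired non-Mersenne divisor. No step looks seriously delicate: the only care needed is to exclude $M(\beta) = 1$ and $M(\gamma) = 1$ at the two places above, which is immediate from $P_i(0) = P_i(1) = 1$ and from the primitivity of $Q$ respectively.
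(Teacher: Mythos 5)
Your argument is correct and follows essentially the same route as the paper: primitivity of the degree-$(a_i+b_i)$ irreducibles (Lemma \ref{primitive}) combined with the identity $M^p+1=(M+1)\,\sigma(M^{p-1})$ to force $p_i=p$ and then to show that every irreducible of that degree divides $\sigma(M^{p-1})$, finishing with Lemma \ref{phiandirreduc}-(iii). The only elision --- which the paper's own proof shares --- is that one must also rule out $M(\gamma)=0$, i.e.\ $Q=M$ itself, in the second step (compare Corollary \ref{anydivides}, where the case $P=M$ is explicitly excluded).
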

\begin{proof}
The polynomial $P_i$ is primitive. If $\alpha$ is a root  of $P_i$,  then $(M^p +1)(\alpha) = 0$ and $M(\alpha) = \alpha^r$ for some $1\leq r \leq p_i-1$. Thus, $1= M(\alpha)^p = \alpha^{rp}$, with $ord(\alpha) = p_i$. So, $p_i$ divides $rp$ and $p_i=p$.\\
Any irreducible polynomial $S$ of degree $a_i+b_i$ is primitive. Let $\beta$ be a root of $S$. One has $ord(\beta) = p_i=p$, $S(\beta)=0$ and $M(\beta) = \beta^s$, for some $1\leq s \leq p_i-1$. Thus, $M(\beta)^{p} = \beta^{ps}= 1$ and $S$ divides $M^p + 1 = x^a(x+1)^b \sigma(M^{p-1})$.\\
The third statement follows from Lemma \ref{phiandirreduc}-(iii).
\end{proof}
\begin{corollary} \label{reduction3}
For any $i \in J$, $a_i+b_i \leq 3$ or $2^{a_i+b_i} -1$ is not prime.
\end{corollary}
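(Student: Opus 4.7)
The plan is to argue by contradiction and let Lemma \ref{reduction2} do essentially all of the work. Assume, toward a contradiction, that there exists $i \in J$ with $a_i+b_i \geq 4$ and with $p_i := 2^{a_i+b_i}-1$ prime. We are in the setting where $2h+1 = p$ is prime, so $\sigma(M^{p-1}) = \sigma(M^{2h})$ and $P_i$ is, by (\ref{assume}), one of its irreducible factors. In particular the hypotheses of Lemma \ref{reduction2} are met verbatim.

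Applying the first conclusion of Lemma \ref{reduction2} gives $p_i = p$. The second conclusion then says that $\sigma(M^{2h})$ is divisible by \emph{every} irreducible polynomial of degree $a_i+b_i$ over $\F_2$. Since we have assumed $a_i+b_i \geq 4$, the third conclusion of Lemma \ref{reduction2} (which itself rests on Lemma \ref{phiandirreduc}-(iii)) guarantees that at least one such irreducible divisor of $\sigma(M^{2h})$ is \emph{not} a Mersenne prime.

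This is incompatible with assumption (\ref{assume}): by construction, every irreducible factor $P_j$ of $\sigma(M^{2h})$ appearing in (\ref{assume}) is of the form $1+x^{a_j}(x+1)^{b_j}$, hence a Mersenne prime. This contradiction forces $a_i+b_i \leq 3$ or $2^{a_i+b_i}-1$ composite, which is exactly the statement of the corollary. There is no serious obstacle here, since the substantive content (primitivity of $P_i$, the degree-matching between divisors of $M^p+1$ and irreducibles of degree $a_i+b_i$, and the existence of a non-Mersenne irreducible of degree $\geq 4$) has already been packaged into Lemma \ref{reduction2} and Lemma \ref{phiandirreduc}.
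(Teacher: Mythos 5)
Your proof is correct and is exactly the argument the paper intends: the corollary is stated without proof as an immediate consequence of Lemma \ref{reduction2}, whose final assertion (a non-Mersenne irreducible divisor of $\sigma(M^{p-1})$ when $a_i+b_i\geq 4$) directly contradicts assumption (\ref{assume}). You have simply written out the one-line deduction the authors left implicit.
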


\begin{lemma} \label{PandQ}
Let $P, Q \in \F_2[x]$ be such that $\deg(P)=r$, $2^r-1$ is prime, $P \nmid Q(Q+1)$ but $P \mid Q^p +1$. Then $2^r-1=p$.
\end{lemma}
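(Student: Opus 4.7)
The plan is to translate the two divisibility hypotheses into statements about orders in the cyclic group $\F_{2^r}^*$. Since $2^r-1$ is a Mersenne prime and $P$ is (implicitly) irreducible of degree $r$, Lemma \ref{primitive} guarantees that $P$ is primitive, so every root $\alpha$ of $P$ in $\F_{2^r}$ satisfies $ord(\alpha)=2^r-1=:q$, and $\F_2[x]/(P)\cong \F_{2^r}$.

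First I would fix such a root $\alpha$ and evaluate the polynomial conditions there. From $P \mid Q^p+1$ we read off $Q(\alpha)^p=1$ in $\F_{2^r}$. From $P\nmid Q$ and $P\nmid Q+1$ (which together encode $P\nmid Q(Q+1)$, since $P$ is irreducible), we get $Q(\alpha)\neq 0$ and $Q(\alpha)\neq 1$. Hence $Q(\alpha)$ is a non-identity element of $\F_{2^r}^*$ whose $p$-th power is $1$; its order therefore is a nontrivial divisor of $p$, and since $p$ is prime the order is exactly $p$.

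Next, Lagrange's theorem forces $p$ to divide the group order $q=2^r-1$. Because $q$ is itself prime and $p>1$, the only possibility is $p=q$, i.e.\ $2^r-1=p$, which is the desired conclusion.

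There is no serious obstacle: the argument is a one-line invocation of the structure of the cyclic group $\F_{2^r}^*$ via Lemma \ref{primitive}. The only subtle point is that the statement tacitly requires $P$ to be irreducible so that $\F_2[x]/(P)$ is genuinely a field (and Lemma \ref{primitive} applies); this matches the way the lemma is invoked in the surrounding results, where $P$ always arises as a prime factor.
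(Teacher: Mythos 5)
Your proof is correct and follows essentially the same route as the paper: both use Lemma \ref{primitive} to get a primitive root of $P$, note that $Q(\alpha)\notin\{0,1\}$ and $Q(\alpha)^p=1$, and conclude via the order structure of the cyclic group $\F_{2^r}^*$ (the paper writes $Q(\beta)=\beta^t$ and argues $2^r-1\mid tp$, which is the same computation as your order-plus-Lagrange step). Your remark that $P$ must tacitly be irreducible, and $p$ prime, matches how the lemma is used in the paper.
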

\begin{proof}
The polynomial $P$ is primitive. If $\beta$ is a root of $P$, then $ord(\beta) = 2^r-1$. Moreover, $Q(\beta) \not\in \{0,1\}$ because $P \nmid Q(Q+1)$. Thus, $Q(\beta) = \beta^t$ for some $1\leq t \leq 2^r-2$. Hence, $1=Q(\beta)^p = \beta^{tp}.$ So, $2^r-1$ divides $tp$ and $2^r-1=p$.
\end{proof}
\begin{corollary} \label{notdivisor}
Let $r \in \N^*$ be such that $2^r-1$ is a prime distinct from $p$. Then, no irreducible polynomial of degree $r$ divides $\sigma(M^{p-1})$.
\end{corollary}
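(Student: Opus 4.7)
The plan is to argue by contradiction, reducing the statement to a direct application of Lemma \ref{PandQ} with $Q = M$. Suppose, for a contradiction, that some irreducible polynomial $P$ of degree $r$ (with $2^r - 1$ prime and $2^r-1 \neq p$) divides $\sigma(M^{p-1})$. Starting from the identity
\[
M^p + 1 = (M+1)\sigma(M^{p-1}) = x^a(x+1)^b\,\sigma(M^{p-1}),
\]
we immediately get $P \mid M^p + 1$. To invoke Lemma \ref{PandQ}, the remaining task is to verify the condition $P \nmid M(M+1)$; once this is established, the lemma forces $2^r - 1 = p$, which contradicts the hypothesis.

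For the verification of $P \nmid M(M+1) = M \cdot x^a(x+1)^b$, I would split it into two parts. First, since $2^r - 1$ is prime we must have $r \geq 2$, so $P \neq x$ and $P \neq x+1$; being irreducible of degree $\geq 2$, $P$ shares no factor with $x^a(x+1)^b = M+1$. Second, if $P \mid M$, then irreducibility of both forces $P = M$, so that $M$ would divide $\sigma(M^{p-1}) = 1 + M + M^2 + \cdots + M^{p-1}$; but reducing the latter modulo $M$ yields $1$, an absurdity. Hence $P \nmid M$, and combining the two pieces gives $P \nmid M(M+1)$.

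Lemma \ref{PandQ} then applies and yields $2^r - 1 = p$, contradicting the standing assumption that $2^r - 1 \neq p$. The argument is short, and the only mildly delicate point is ruling out the case $P = M$, which is handled by the trivial congruence $\sigma(M^{p-1}) \equiv 1 \pmod{M}$.
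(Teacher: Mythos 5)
Your proposal is correct and follows the paper's own proof exactly: both apply Lemma \ref{PandQ} with $Q = M$ to the divisibility $P \mid M^p+1 = x^a(x+1)^b\,\sigma(M^{p-1})$ and derive the contradiction $2^r-1 = p$. The only difference is that you explicitly verify the hypothesis $P \nmid M(M+1)$ (via $r \geq 2$ and $\sigma(M^{p-1}) \equiv 1 \pmod{M}$), which the paper leaves implicit; this is a welcome addition, not a deviation.
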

\begin{proof}
If $P$ is a prime divisor of $\sigma(M^{p-1})$ with $\deg(P)=r$, then $P$ divides $M^p+1$ and by taking $Q = M$ in the above lemma, we get a contradiction.
\end{proof}

In the following  lemma and  two corollaries, we suppose that $p$ is a Mersenne prime of the form $2^m-1$ (with $m$ prime).

\begin{lemma} \label{polyPandQ}
Let $P, Q \in \F_2[x]$ be such that $P$ is irreducible of degree $m$ and $P \nmid Q(Q+1)$.
Then, $P$ divides $Q^p +1$.
\end{lemma}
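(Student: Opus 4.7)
The plan is to reduce modulo $P$ and exploit the fact that since $P$ is irreducible of degree $m$, the residue ring $\F_2[x]/(P)$ is the field $\F_{2^m}$. The whole point of the hypothesis $p = 2^m-1$ being prime is structural: the multiplicative group $\F_{2^m}^*$ has prime order $p$, so \emph{every} nonzero element of $\F_{2^m}$ is automatically a $p$-th root of unity. Equivalently, every irreducible polynomial of degree $m$ over $\F_2$ is primitive (this is Lemma \ref{primitive} applied with $r = m$).

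Concretely, I would let $\beta$ be a root of $P$ in $\F_{2^m}$. The hypothesis $P \nmid Q(Q+1)$ implies in particular that $P \nmid Q$, so $Q(\beta) \neq 0$ and hence $Q(\beta) \in \F_{2^m}^*$. By Lagrange's theorem applied to the group $\F_{2^m}^*$ of order $p$, we get $Q(\beta)^p = 1$, i.e.\ $Q(\beta)^p + 1 = 0$ in $\F_{2^m}$ (recall we are in characteristic $2$, so $-1 = 1$). Since $P$ is the minimal polynomial of $\beta$ over $\F_2$, the vanishing of $Q^p + 1$ at $\beta$ forces $P \mid Q^p + 1$ in $\F_2[x]$.

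There is no real obstacle here: the argument is essentially a one-line consequence of $|\F_{2^m}^*| = p$, and only the weaker hypothesis $P \nmid Q$ is actually used. The stronger assumption $P \nmid Q(Q+1)$ is presumably included because the lemma will be applied later together with a statement about the multiplicative order of $Q(\beta)$: in that context one also needs $Q(\beta) \neq 1$, so that $Q(\beta)$ is in fact a primitive $p$-th root of unity rather than trivial. The only mild care needed in the write-up is to invoke the primitivity statement (Lemma \ref{primitive}) to justify that $\F_{2^m}^*$ really does have order $p$ when $p = 2^m - 1$ is prime.
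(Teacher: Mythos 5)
Your proof is correct and follows essentially the same route as the paper's: reduce modulo $P$, note that $\F_{2^m}^*$ has prime order $p=2^m-1$, and conclude $Q(\beta)^p=1$ (the paper phrases this by writing $Q(\beta)=\beta^t$ for a primitive root $\beta$, while you invoke Lagrange directly, which is marginally cleaner). Your side observation is also accurate: only $P\nmid Q$ is needed here, and the stronger hypothesis $P\nmid Q(Q+1)$ is what the subsequent corollary uses to rule out $P\in\{x,x+1,M\}$ and pass from $P\mid M^p+1$ to $P\mid\sigma(M^{p-1})$.
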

\begin{proof}
The polynomial $P$ is primitive. If $\beta$ is a root of $P$, then $ord(\beta) = 2^m-1=p$, $Q(\beta) \not\in \{0,1\}$ because $P \nmid Q(Q+1)$. Thus, $Q(\beta) = \beta^t$ for some $1\leq t \leq p-1$. Hence, $Q(\beta)^p = \beta^{tp} = 1.$ So, $P$ divides $Q^p + 1$.
\end{proof}
\begin{corollary} \label{anydivides}
Any irreducible polynomial $P \not = M$ $($Mersenne or not$)$, of degree $m$, divides $\sigma(M^{p-1})$.
\end{corollary}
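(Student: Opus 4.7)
The plan is to apply Lemma \ref{polyPandQ} directly with $Q = M$, then factor out the ``trivial'' part $x^a(x+1)^b$ from $M^p+1$.

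First I would verify the two hypotheses of Lemma \ref{polyPandQ} for $Q = M$. The polynomial $P$ is irreducible of degree $m$ by assumption, so I only need to check $P \nmid M(M+1)$. Since $M$ is itself irreducible and $P \neq M$, we have $P \nmid M$. For the factor $M+1$, recall that $M = 1 + x^a(x+1)^b$ forces $M+1 = x^a(x+1)^b$, whose only irreducible factors are $x$ and $x+1$, both of degree $1$. Because $p = 2^m-1$ is a (prime) Mersenne number, $m$ itself is prime and in particular $m \geq 2$; therefore $\deg P = m \geq 2$ and $P \notin \{x, x+1\}$, giving $P \nmid M+1$. Combining, $P \nmid M(M+1)$.

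Lemma \ref{polyPandQ} then yields $P \mid M^p + 1$. Finally I would use the identity
\[
M^p + 1 = (M+1)\,\sigma(M^{p-1}) = x^a(x+1)^b\,\sigma(M^{p-1}).
\]
Since $\deg P \geq 2$, $P$ is coprime to $x^a(x+1)^b$, so $P \mid \sigma(M^{p-1})$, which is the desired conclusion.

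There is no real obstacle here: the corollary is essentially an immediate specialization of Lemma \ref{polyPandQ} to $Q = M$, with the only substantive (but easy) verification being that $P$ cannot be the linear polynomials $x$ or $x+1$. The only point requiring a moment's thought is noting that $p$ being a prime of the form $2^m - 1$ forces $m$ prime, hence $m \geq 2$, which rules out the degree-one case and makes the argument clean.
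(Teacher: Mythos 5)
Your proof is correct and follows essentially the same route as the paper: apply Lemma \ref{polyPandQ} with $Q=M$ after checking $P\nmid M(M+1)=x^a(x+1)^bM$, then pass from $P\mid M^p+1=x^a(x+1)^b\,\sigma(M^{p-1})$ to $P\mid\sigma(M^{p-1})$ using that $P$ is odd. The extra detail you supply (that $m$ prime forces $\deg P=m\geq 2$, so $P\notin\{x,x+1\}$) is exactly the point the paper compresses into the phrase ``$P$ is odd.''
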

\begin{proof}
We may apply Lemma \ref{polyPandQ} with $Q = M$ because $P$ does not divide $x^a(x+1)^bM=M(M+1)=Q(Q+1)$. So, $P$ is odd and it divides $M^p +1 = (M+1) \ \sigma(M^{p-1}) = x^a(x+1)^b \ \sigma(M^{p-1})$.
\end{proof}
\begin{corollary} \label{reduction4}
The polynomial
$M_1$ $($resp. $M_2$, $\overline{M_2})$ divides $\sigma(M^{p-1})$ if and only if $(M \not= M_1$ and $p=3)$
$($resp. $M \not= M_2$ and $p=7$, $M \not= \overline{M_2}$ and $p=7)$.
\end{corollary}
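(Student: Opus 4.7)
The plan is to treat the three cases $P \in \{M_1, M_2, \overline{M_2}\}$ uniformly. Each such $P$ is irreducible of degree $m_P \in \{2,3\}$, and the corresponding integer $2^{m_P}-1$ equals $3$ or $7$, which is prime. Thus both Lemma \ref{reduction2} and Corollary \ref{anydivides} apply to $P$ without any extra hypothesis, and the corollary is essentially a bookkeeping combination of those two results.

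For the ``if'' direction, assume $M \neq P$ and that $p = 2^{m_P} - 1$ (so $p=3$ when $P=M_1$ and $p=7$ when $P \in \{M_2, \overline{M_2}\}$). Since $p$ is a Mersenne prime of the form $2^{m_P}-1$ and $P \neq M$ is an irreducible polynomial of degree $m_P$, Corollary \ref{anydivides} yields directly $P \mid \sigma(M^{p-1})$.

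For the ``only if'' direction, assume $P \mid \sigma(M^{p-1})$. Writing $P = 1 + x^{a_P}(x+1)^{b_P}$ with $a_P + b_P = m_P$, the exponent $2^{a_P+b_P} - 1 = 2^{m_P}-1$ is prime, so Lemma \ref{reduction2} applies with $P_i = P$ and forces $p = 2^{m_P}-1 \in \{3,7\}$, as required. To rule out the possibility $M = P$, observe that modulo $P$ one has $\sigma(P^{p-1}) = 1 + P + P^2 + \cdots + P^{p-1} \equiv 1 \pmod{P}$, so $P \nmid \sigma(P^{p-1})$; this contradicts the hypothesis if $M = P$. Hence $M \neq P$, completing the proof.

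I do not anticipate any genuine obstacle: the degree/primality conditions match the hypotheses of Lemma \ref{reduction2} and Corollary \ref{anydivides} exactly, and the auxiliary fact $M \neq P$ follows from an elementary congruence modulo $P$. The only care needed is to check, case by case, that $m_P$ really is the right Mersenne exponent, but this is immediate from the definitions $M_1 = 1 + x(x+1)$, $M_2 = 1 + x(x+1)^2$, $\overline{M_2} = 1 + x^2(x+1)$.
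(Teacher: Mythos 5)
Your proof is correct and follows essentially the same route as the paper, whose entire proof is ``Apply Corollary \ref{anydivides} with $m\in\{2,3\}$'': the ``if'' direction is exactly that application, and your ``only if'' direction via Lemma \ref{reduction2} (plus the congruence $\sigma(P^{p-1})\equiv 1 \pmod{P}$ to exclude $M=P$) is just an explicit version of what the paper leaves implicit through Corollary \ref{notdivisor}. No gaps; if anything, your write-up is more complete than the original.
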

\begin{proof} Apply Corollary \ref{anydivides} with $m\in \{2,3\}$.
\end{proof}
In order to carry on the proof (of Theorem \ref{result0}), we distinguish three cases.
\subsection{Case I:  $M \in \{M_1, M_3, \overline{M_3}\}$}
Lemma \ref{omegasigmM2h} implies that $M \not= M_1$. It suffices to suppose that $M =M_3$.
We refer to Section 5.2 in \cite{Gall-Rahav13}. Put $D=M_1M_2\overline{M_2}$. By \cite[ Lemma 5.4]{Gall-Rahav13}, we have to consider four situations:\\
(i) $\gcd(\sigma(M^{2h}),D) = 1$, \\
(ii) $\sigma(M^{2h}) = M_1 B$, with $\gcd(B,D)=1$,\\
(iii) $\sigma(M^{2h}) = M_2\overline{M_2} B$, with $\gcd(B,D)=1$,\\
(iv) $\sigma(M^{2h}) = D B$, with $\gcd(B,D)=1$,
where any irreducible divisor of $B$ has degree exceeding $5$.\\
The following lemma contradicts the fact that $U_{2h}$ is a square.
\begin{lemma}
One has $\alpha_3(U_{2h})= 1$ or $\alpha_5(U_{2h})= 1$.
\end{lemma}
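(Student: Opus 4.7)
The plan is to derive a contradiction with Corollary~\ref{sommaieven}(ii), which forces $U_{2h}$ to be a square in $\F_2[x]$. A square in $\F_2[x]$ has vanishing coefficient at every odd-degree monomial, so, since $\deg U_{2h} = 2h \deg M = 8h$ is even, the coefficients of $x^{8h-3}$ and $x^{8h-5}$ in $U_{2h}$ are both zero, i.e.\ $\alpha_3(U_{2h}) = \alpha_5(U_{2h}) = 0$. Hence it suffices to show that $\alpha_3(U_{2h}) = 1$ or $\alpha_5(U_{2h}) = 1$ in each of the four listed subcases.

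The key ingredient is a direct evaluation of $\alpha_3(V)$ and $\alpha_5(V)$ for $V := \sigma(M^{2h})$. Because $M = M_3 = x^4+x^3+x^2+x+1$ satisfies $a+b = 4$, Lemma~\ref{alfasigmM2h} gives $\alpha_3(V) = \alpha_3(M^{2h})$ and $\alpha_5(V) = \alpha_5(M^{2h}) + c$, where $c$ denotes the coefficient of $x^{8h-5}$ in $M^{2h-1}$. Since $M^{2h}$ is a square in $\F_2[x]$, its odd-degree coefficients all vanish, giving $\alpha_3(M^{2h}) = \alpha_5(M^{2h}) = 0$. Factor $M^{2h-1} = M \cdot (M^{h-1})^2$: the square $(M^{h-1})^2$ has only even-degree terms and degree $8h-8$, while $M$ contributes the odd-degree monomials $x^3$ and $x$. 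Only the product $x^3 \cdot x^{8h-8}$ yields $x^{8h-5}$ (the candidate $x \cdot x^{8h-6}$ exceeds $\deg(M^{h-1})^2$), and the top coefficient of $(M^{h-1})^2$ is $1$. Hence $c = 1$, so $\alpha_5(V) = 1$ and $\alpha_3(V) = 0$.

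I would then dispatch the four subcases. In Case~(i), every prime divisor of $V = B$ has degree $\geq 6$, so $\sigma(V) - V$ has degree $\leq 8h-6$, whence $\alpha_l(U_{2h}) = \alpha_l(V)$ for $l \leq 5$; in particular $\alpha_5(U_{2h}) = 1$. In Cases~(ii)--(iv), factor $V = E \cdot B$ with $E$ the product of small Mersenne divisors of $V$, so that $E \in \{M_1,\; M_2\overline{M_2},\; M_1 M_2 \overline{M_2}\}$ and correspondingly $\sigma(E) \in \{x(x+1),\; x^3(x+1)^3,\; x^4(x+1)^4\}$. Multiplicativity gives $U_{2h} = \sigma(E) \cdot \sigma(B)$, and because $B$ still has only prime divisors of degree $\geq 6$, the top six coefficients of $\sigma(B)$ agree with those of $B$. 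Inverting the linear relation $V = E \cdot B$ via Lemma~\ref{lesalfal}(i) expresses the top few coefficients of $B$ in terms of those of $V$, and the latter are already accessible from $M^{2h}$ and $M^{2h-1}$ through Lemma~\ref{alfasigmM2h}. Multiplying by $\sigma(E)$ produces explicit formulas for $\alpha_3(U_{2h})$ and $\alpha_5(U_{2h})$; for instance, in Case~(ii) a short bookkeeping shows $\alpha_3(U_{2h}) = b_{8h-5} + b_{8h-4} \equiv 1 \pmod 2$ irrespective of $h$, contradicting squareness directly.

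The main obstacle lies in Cases~(iii) and (iv), where $\sigma(E)$ has degree $6$ or $8$ and combines more coefficients of $\sigma(B)$; one then needs $\alpha_l(V)$ for $l$ as large as $7$ or $9$, each computed from $M^{2h}$ and $M^{2h-1}$ by the same technique. These auxiliary coefficients are expressible in terms of binomial coefficients like $\binom{h}{k}$ modulo $2$, and may force a split of the argument according to the residue of $h$ modulo a small power of $2$. Nevertheless, the persistent ingredient $\alpha_5(V) = 1$ propagates through the linear combinations and guarantees that in every subcase at least one of $\alpha_3(U_{2h}),\;\alpha_5(U_{2h})$ equals $1$, closing the proof of Case~I.
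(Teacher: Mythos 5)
Your framework is the right one and coincides with the paper's: reduce to showing one odd-indexed top coefficient of $U_{2h}$ is $1$, compute $\alpha_3(\sigma(M^{2h}))=0$ and $\alpha_5(\sigma(M^{2h}))=1$ from Lemma \ref{alfasigmM2h} and the fact that $M^{2h}$ is a square, and then push these through the factorization $\sigma(M^{2h})=E\cdot B$ in each of the four subcases. Your case (i) is complete, and your case (ii) conclusion $\alpha_3(U_{2h})=\alpha_3(B)+\alpha_2(B)=\alpha_1(B)+ \alpha_3(\sigma(M^{2h}))=1$ is exactly the computation the paper writes out (the paper handles (i), (iii), (iv) by citing Lemmas 5.9, 5.10, 5.15, 5.17 of \cite{Gall-Rahav13} rather than redoing them).

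The genuine gap is that cases (iii) and (iv) are not proved: the closing claim that ``the persistent ingredient $\alpha_5(V)=1$ propagates through the linear combinations and guarantees'' the conclusion is an expression of hope, not an argument, and as stated it could equally well fail (a priori the contributions could cancel to give $\alpha_3(U_{2h})=\alpha_5(U_{2h})=0$, which is precisely what the Remark at the end of the paper says happens for $p=7$). You need to actually run the bookkeeping. It does close, and more easily than you fear: no splitting on $h$ modulo powers of $2$ is needed. In case (iv), $\sigma(M_1M_2\overline{M_2})=x^4(x+1)^4=x^8+x^4$, so $\alpha_3(U_{2h})=\alpha_3(\sigma(B))=\alpha_3(B)$; writing $D=M_1M_2\overline{M_2}=x^8+x^6+x^5+x^4+x^3+x^2+1$ and using Lemma \ref{lesalfal}(i) gives $\alpha_1(B)=\alpha_1(V)=0$ and $0=\alpha_3(V)=\alpha_3(B)+\alpha_1(B)+1$, hence $\alpha_3(U_{2h})=\alpha_3(B)=1$. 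In case (iii), with $E=M_2\overline{M_2}=\sigma(x^6)$ and $\sigma(E)=x^3(x+1)^3$, one finds $\alpha_1(B)=1$, $\alpha_2(B)=\alpha_3(B)=h$, $\alpha_4(B)=h+\binom{h}{2}+1$, $\alpha_5(B)=h+\binom{h}{2}$ (using $\alpha_2(V)=h$, $\alpha_4(V)=h+\binom{h}{2}+1$, $\alpha_5(V)=1$), and then $\alpha_5(U_{2h})=\alpha_5(B)+\alpha_4(B)+\alpha_3(B)+\alpha_2(B)=1$, the binomial terms cancelling. With these two computations written out, your proof is complete and is, in substance, a self-contained version of the argument the paper delegates to \cite{Gall-Rahav13}.
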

\begin{proof}
For (i), (iii) and (iv), use Lemmas 5.9, 5.10, 5.15, 5.17 (in \cite{Gall-Rahav13}).\\
(ii) Since $\sigma(M^{2h}) = (x^2+x+1)B$ and $U_{2h} =  (x^2+x) \sigma(B)$, we obtain (by Lemmas \ref{lesalfal} and \ref{alfasigmM2h}):
$$\left\{\begin{array}{l}
0= \alpha_1(M^{2h}) = \alpha_1(\sigma(M^{2h})) = \alpha_1(B)+1,\\
\alpha_3(U_{2h}) = \alpha_3(\sigma(B)) + \alpha_2(\sigma(B)) = \alpha_3(B)+\alpha_2(B),\\
0 = \alpha_3(M^{2h}) = \alpha_3(\sigma(M^{2h})) = \alpha_3(B)+\alpha_2(B)+\alpha_1(B).
\end{array}
\right.$$
Thus, $\alpha_3(U_{2h}) = \alpha_3(B)+\alpha_2(B) = \alpha_1(B)=1$.
\end{proof}
\subsection{Case II: $M \in \{M_2, \overline{M_2}\}$ and $h \geq 2$} \label{caseM2}
It suffices to consider that $M=M_2=1+x+x^3$.
\begin{lemma} \label{divisordeg} (i) If $h \geq 4$, then $M_1$ $($resp. $\overline{M_2})$ divides $\sigma(M^{2h})$
if and only if $3$ divides $2h+1$ $($resp. $7$ divides $2h+1)$.\\
(ii)  If $h \geq 4$ and if $2h+1$ is divisible by a prime $p \not\in \{3,7\}$, then any irreducible divisor of $\sigma(M^{2h})$ is of degree at least $4$.
\end{lemma}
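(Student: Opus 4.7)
The plan is to prove (i) by combining Lemma \ref{p-reduction} and Corollary \ref{anydivides} for the forward direction with a finite-field primitivity computation for the converse, and to prove (ii) as a direct consequence of Lemma \ref{ord(2)divise} and the elementary classification of primes $p$ with small $ord_p(2)$.

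For the ``if'' direction of (i), I would apply Lemma \ref{p-reduction} with the divisor $k=3$ (respectively $k=7$) of $2h+1$: this gives $\sigma(M^{2}) \mid \sigma(M^{2h})$ (resp.\ $\sigma(M^{6}) \mid \sigma(M^{2h})$). Since $3=2^2-1$ and $7=2^3-1$ are Mersenne primes of the form $2^m-1$ and $M=M_2$ is distinct from $M_1$ (resp.\ from $\overline{M_2}$), Corollary \ref{anydivides} gives $M_1 \mid \sigma(M_2^{2})$ and $\overline{M_2} \mid \sigma(M_2^{6})$, so both divide $\sigma(M^{2h})$.

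For the ``only if'' direction, assume first that $M_1 \mid \sigma(M_2^{2h})$. Since $M_2+1 = x(x+1)^2$ splits into linear factors, $M_1 \nmid M_2+1$, hence $M_1 \mid M_2^{2h+1}+1$. Pick a root $\beta \in \F_4$ of $M_1$; then $\beta^3=1$, and a direct computation gives $M_2(\beta) = 1+\beta+\beta^3 = \beta$. Thus $1 = M_2(\beta)^{2h+1} = \beta^{2h+1}$, which forces $3 \mid 2h+1$. The case of $\overline{M_2}$ runs parallel with a root $\gamma \in \F_8$ of $\overline{M_2}$: by Lemma \ref{primitive}, $\gamma$ has order $7$, and $M_2(\gamma) \in \F_8^{*}\setminus\{1\}$ since $\overline{M_2}$ divides neither $M_2$ nor $M_2+1$; because $\F_8^{*}$ has prime order $7$, $M_2(\gamma)$ itself has order $7$, and $M_2(\gamma)^{2h+1}=1$ yields $7 \mid 2h+1$.

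For (ii), I would use the standing reduction noted just before Section \ref{preliminaire} that it suffices to consider $2h+1 = p$ prime. If $p \notin \{3,7\}$, then $ord_p(2) \geq 4$, because the only primes with $ord_p(2)=2$ or $3$ are $p=3$ and $p=7$ respectively. Lemma \ref{ord(2)divise} then forces $ord_p(2) \mid \deg(P_j)$ for every irreducible divisor $P_j$ of $\sigma(M^{2h})$, so $\deg(P_j) \geq 4$. The main subtlety lies in (i)'s converse, specifically in identifying cleanly the order of $M_2(\gamma)$ in $\F_8^{*}$; this hinges on the coprimality of $M_2$ and $\overline{M_2}$ together with the primitivity arguments already developed in Section \ref{preliminaire}, and once those are in hand the rest is bookkeeping.
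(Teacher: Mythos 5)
Your part (i) is correct and is essentially the paper's argument made explicit: the paper disposes of (i) by citing Corollary \ref{anydivides} (which, combined with Lemma \ref{p-reduction} exactly as you do, gives the ``if'' direction) and Corollary \ref{notdivisor} (for the ``only if'' direction). Your direct primitive-root computation for the converse is in fact slightly more careful than the paper's bare citation, since Corollary \ref{notdivisor} is stated only for a prime exponent $p-1$ while here $2h+1$ may be composite; your argument (the order of $M_2(\beta)$, namely $3$ resp.\ $7$, must divide $2h+1$) is precisely the needed extension of Lemma \ref{PandQ} to composite exponents.

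Part (ii) is where there is a genuine gap, on two counts. First, the reduction ``it suffices to consider $2h+1=p$ prime'' transfers in the wrong direction for this statement: Lemma \ref{p-reduction} gives $\sigma(M^{p-1}) \mid \sigma(M^{2h})$, which propagates the \emph{existence} of a bad divisor upward but says nothing about the absence of small-degree divisors of the larger polynomial $\sigma(M^{2h})$. (Indeed, for $2h+1=15$ one has $5\notin\{3,7\}$ and yet $M_1 \mid \sigma(M^{14})$ by part (i); the statement has to be read, as it is used in case II-3, with $2h+1$ itself equal to a prime $p\notin\{3,5,7\}$.) Second, Lemma \ref{ord(2)divise} is established only under the standing hypothesis (\ref{assume}) that $\sigma(M^{2h})$ is already a product of Mersenne primes --- its proof runs through the gcd of the exponents $a_i+b_i$ of that assumed factorization --- so it cannot be invoked for ``every irreducible divisor of $\sigma(M^{2h})$'' unconditionally; if $\sigma(M^{2h})$ had, say, the factor $M_1$ times something non-Mersenne, your argument could not rule it out. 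The intended (and much shorter) deduction is the paper's: (ii) follows from (i), because the only irreducible polynomials of degree at most $3$ over $\F_2$ are $x$, $x+1$, $M_1$, $M_2$, $\overline{M_2}$; the linear ones and $M=M_2$ never divide $\sigma(M^{2h})$, and $M_1$ and $\overline{M_2}$ are excluded by (i) once $2h+1$ is coprime to $21$.
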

\begin{proof}
The assertion (ii) follows from (i) which in turn, follows from Corollaries \ref{notdivisor} and \ref{anydivides}.
\end{proof}
We consider three possibilities since $\sigma({M}^{p-1}) = \sigma({M_2}^{2})= M_1 \overline{M_3}$ (product of two Mersenne primes), if $p=3$.

\subsubsection{II-1: $2h+1$ is (divisible by) a prime $p \in \{5,7\}$}
\begin{lemma}
For $p \in \{5,7\}$, some non-Mersenne prime divides $\sigma({M}^{p-1})$.
\end{lemma}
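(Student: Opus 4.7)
The plan is to handle the two cases $p = 5$ and $p = 7$ separately, in each deriving a contradiction from the standing assumption~\eqref{assume} that every irreducible factor of $\sigma(M^{p-1})$ is a Mersenne prime. In both cases, Lemma~\ref{ord(2)divise} forces each such factor to have degree divisible by $\mathrm{ord}_p(2)$, and Corollary~\ref{sommaieven}(iii) rules out $\sigma(M^{p-1})$ being irreducible. The remaining work is a brief enumeration of small-degree Mersenne primes together with a handful of explicit polynomial divisions in $\F_2[x]$.

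For $p = 5$ I would compute $\sigma(M^4) = 1 + M + M^2 + M^3 + M^4 = 1 + x^3 + x^4 + x^5 + x^7 + x^9 + x^{12}$ directly. Since $\mathrm{ord}_5(2) = 4$, every Mersenne factor has degree in $\{4, 8, 12\}$, so the partitions of the total degree $12$ are $12$, $8 + 4$, and $4 + 4 + 4$; reducibility excludes the first, so at least one degree-$4$ Mersenne factor must appear. The only Mersenne primes of degree $4$ are $M_3 = 1 + x + x^2 + x^3 + x^4$ and $\overline{M_3} = 1 + x^3 + x^4$ (from $\gcd(a, 4) = 1$ forcing $a \in \{1, 3\}$). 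Two short reductions show $\sigma(M^4) \equiv x^3 \pmod{M_3}$ and $\sigma(M^4) \equiv x^3 + x \pmod{\overline{M_3}}$, both nonzero, contradiction.

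For $p = 7$ I would use the factorization $y^7 + 1 = (y+1)(y^3 + y + 1)(y^3 + y^2 + 1)$ in $\F_2[y]$ to write $\sigma(M^6) = (M^3 + M + 1)(M^3 + M^2 + 1)$, and concentrate on $g := M^3 + M^2 + 1 = 1 + x + x^5 + x^7 + x^9$, a degree-$9$ factor. Under the Mersenne hypothesis, $g$ factors into Mersenne primes of degree divisible by $\mathrm{ord}_7(2) = 3$ summing to $9$, whence the partitions are $9$, $6+3$, and $3+3+3$. The only degree-$3$ Mersenne primes are $M_2 = M$ and $\overline{M_2}$; since $M \mid M^2, M^3$ one has $g \equiv 1 \pmod{M_2}$, and a direct reduction gives $g \equiv x^2 + 1 \not\equiv 0 \pmod{\overline{M_2}}$. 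This excludes the last two partitions, forcing $g$ itself to be an irreducible Mersenne prime of degree $9$. Enumerating the six candidates $1 + x^a(x+1)^{9-a}$ with $\gcd(a, 9) = 1$, namely $a \in \{1, 2, 4, 5, 7, 8\}$, one sees $g$ matches none of them, contradiction. The main obstacle is nothing conceptual; it is simply ensuring the Mersenne enumerations are exhaustive in each relevant degree and that the explicit reductions in $\F_2[x]$ are carried out without error.
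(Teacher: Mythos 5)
Your proof is correct, and I checked the computations: $\sigma(M_2^4)=x^{12}+x^9+x^7+x^5+x^4+x^3+1$, the reductions $x^3 \bmod M_3$ and $x^3+x \bmod \overline{M_3}$, the factor $g=M^3+M^2+1=x^9+x^7+x^5+x+1$ of $\sigma(M_2^6)$, and the reductions $1 \bmod M_2$ and $x^2+1 \bmod \overline{M_2}$ are all right, and the enumerations of Mersenne primes in degrees $3$, $4$ and $9$ are exhaustive. However, your route is genuinely different from the paper's. The paper applies its standing criterion, Corollary~\ref{sommaieven}-(ii): under assumption~(\ref{assume}) the polynomial $U_{2h}=\sigma(\sigma(M^{2h}))$ must split over $\F_2$ and be a square; it then simply computes $U_4=x^3(x+1)^6(x^3+x+1)$ and $U_6=x^8(x+1)^4(x^3+x+1)^2$ and observes that the irreducible cubic factor $x^3+x+1$ prevents splitting. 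That is a one-line contradiction per case and is uniform with the engine driving Cases I and III of Section~\ref{proof0}. You instead stay with $\sigma(M^{p-1})$ itself, use Lemma~\ref{ord(2)divise} to force every hypothetical Mersenne factor to have degree divisible by $\mathrm{ord}_p(2)$, and kill each admissible degree partition by explicit division; this costs more case analysis but needs only the degree lemma, Corollary~\ref{sommaieven}-(iii) and the $\gcd$ observation from Lemma~\ref{phiandirreduc}-(iii), and it makes visible exactly where the Mersenne factorization fails rather than detecting the failure after a second application of $\sigma$. One small remark: for $p=5$ you could have bypassed the partition $8+4$ entirely by citing Lemma~\ref{mersennedeg8k} (no Mersenne prime has degree divisible by $8$), though your argument does not need it since every surviving partition already contains a part equal to $4$.
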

\begin{proof}
Here, $h \in \{2,3\}$. By direct computations,
$U_4 = x^3(x+1)^6 (x^3+x+1)$ and
$U_6 = x^8(x+1)^4(x^3+x+1)^2$ which do not split (despite that $U_6$ is a square).
\end{proof}

\subsubsection{II-2: $2h+1 = 3^w$, for some $w\geq 2$}
In this case, $9$ divides $2h+1$ and $\sigma(M^8)$ divides $\sigma(M^{2h})$ (by Lemma \ref{p-reduction}). But,
$\sigma(M^8) = (x^2+x+1)(x^4+x^3+1)(x^6+x+1)(x^{12}+x^8+x^7+x^4+1)$, where $x^6+x+1 = 1+x(x+1)M_3$ is not a Mersenne prime.
\subsubsection{II-3: $2h+1$ is (divisible by) a prime $p \not\in \{3,5,7\}$}
We may write $p= 2h+1$ with $h \geq 4$.

\begin{lemma}
(i) If $l\in \{1,2,3\}$, then $\alpha_l(U_{2h}) = \alpha_l(\sigma(M^{2h}))$.\\
(ii) If $l\in \{1,2\}$, then $\alpha_l(\sigma(M^{2h})) = \alpha_l(M^{2h})$.\\
(iii) The coefficients $\alpha_3(\sigma(M^{2h}))$ and $\alpha_3(M^{2h}+M^{2h-1})$ are equal.
\end{lemma}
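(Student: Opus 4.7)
The plan is that all three parts reduce, in a straightforward way, to the preliminary lemmas of Section \ref{preliminaire}, once one plugs in $a+b=\deg(M_2)=3$ and exploits the degree bound on the prime factors of $\sigma(M^{2h})$ available in subcase II-3.

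For part (i) I would apply Lemma \ref{lesalfal}(ii) with $S=\sigma(M^{2h})$ and $r=3$. This requires verifying that every irreducible factor of $\sigma(M^{2h})$ has degree at least $4$, which is precisely the content of Lemma \ref{divisordeg}(ii): in subcase II-3 we have $h \geq 4$ and $2h+1$ divisible by a prime $p \notin \{3,5,7\}$, so a fortiori $p \notin \{3,7\}$. Once this degree bound is in hand, using $\sigma(S)=U_{2h}$ yields $\alpha_l(U_{2h})=\alpha_l(\sigma(M^{2h}))$ for every $l \leq 3$, which is exactly (i).

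Parts (ii) and (iii) are direct specializations of Lemma \ref{alfasigmM2h}. With $a+b=3$, the two ranges stated in that lemma become $l \leq 2$ and $3 \leq l \leq 5$. The values $l \in \{1,2\}$ sit in the first range, giving (ii), and the value $l=3$ sits in the second range, giving (iii).

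There is no genuinely hard step here: the lemma is a bookkeeping device consolidating earlier facts into the precise form needed for the coefficient extractions that follow, presumably to compute $\alpha_1(U_{2h})$, $\alpha_2(U_{2h})$, and $\alpha_3(U_{2h})$ from the much more accessible coefficients of $M^{2h}$ and $M^{2h-1}$, and thereby derive a contradiction with Corollary \ref{sommaieven}(ii), which forces $U_{2h}$ to be a square. The only substantive prerequisite is the degree-bound input of Lemma \ref{divisordeg}(ii), which itself rests on the primitivity arguments of Corollaries \ref{notdivisor} and \ref{anydivides}.
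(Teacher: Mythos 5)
Your proof is correct and follows essentially the same route as the paper: part (i) via Lemma \ref{divisordeg}(ii) combined with Lemma \ref{lesalfal}(ii), and parts (ii)--(iii) by the degree comparison between $\deg(M^{2h})-l$ and $\deg(M^{2h-1})$, $\deg(M^{2h-2})$. The only cosmetic difference is that you cite Lemma \ref{alfasigmM2h} with $a+b=3$ directly, whereas the paper re-runs the same degree count inline; the underlying argument is identical.
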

\begin{proof}
(i) It follows from Lemma \ref{divisordeg}.\\
For $l \leq 2$, $6h -l = \deg(\sigma(M^{2h}))-l = \deg((M^{2h})-l > 3(2h-1) = \deg(M^{2h-1})$ and for $3 \leq l\leq 5$, $6h -l > 3(2h-2) = \deg(M^{2h-2})$. Hence, we get (ii) and (iii).
\end{proof}
\begin{corollary}
The coefficient $\alpha_3(U_{2h})$ equals $1$.
\end{corollary}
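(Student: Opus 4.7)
The plan is to combine the three identities provided by the preceding lemma with a direct inspection of the two leading terms of $M^{2h}$ and $M^{2h-1}$. First I would invoke parts~(i) and~(iii) of that lemma to collapse the problem to
\[
\alpha_3(U_{2h}) = \alpha_3(\sigma(M^{2h})) = \alpha_3(M^{2h}+M^{2h-1}).
\]
Since $\deg M=3$, the polynomial $M^{2h}+M^{2h-1}$ has degree $6h$, so it remains only to compute the coefficient of $x^{6h-3}$ in this sum.

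The key observation is that over $\F_2$,
\[
M^2 = (1+x+x^3)^2 = 1+x^2+x^6,
\]
so $M^{2h}=(M^2)^h$ is supported on \emph{even} degrees only; in particular its coefficient of $x^{6h-3}$ vanishes. On the other hand $M^{2h-1}$ has degree exactly $6h-3$ with leading coefficient $1$, and thus contributes $1$ to the coefficient of $x^{6h-3}$. Adding the two contributions gives $\alpha_3(M^{2h}+M^{2h-1})=1$, whence $\alpha_3(U_{2h})=1$ by the identities above.

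No real obstacle is expected here: the whole argument rests on the mild coincidence that $\deg M$ is odd, which forces $M^{2h-1}$ to carry a nonzero monomial of odd degree $6h-3$, while the square $M^{2h}$ is automatically supported on even degrees. The payoff is immediate, because Corollary~\ref{sommaieven}(ii) asserts that $U_{2h}$ is a square in $\F_2[x]$ and a square has all its odd-exponent coefficients equal to $0$; the equality $\alpha_3(U_{2h})=1$ therefore yields the contradiction that completes Case~II-3.
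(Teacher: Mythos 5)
Your proof is correct, and it reaches the same reduction as the paper --- namely, parts (i) and (iii) of the preceding lemma give $\alpha_3(U_{2h})=\alpha_3(M^{2h}+M^{2h-1})$ --- but you then evaluate that coefficient by a genuinely different and cleaner computation. The paper factors $M^{2h}+M^{2h-1}=(x^3+x)M^{2h-1}$, applies Lemma \ref{lesalfal}(i) to get $\alpha_3(M^{2h-1})+\alpha_1(M^{2h-1})$, and then reads off these two coefficients from the binomial expansion $M^{2h-1}=(x^3+x)^{2h-1}+(x^3+x)^{2h-2}+\cdots$. You instead observe that $M^{2h}=(M^2)^h$ with $M^2=1+x^2+x^6$ is supported on even degrees, so it cannot contribute to the coefficient of $x^{6h-3}$, while $M^{2h-1}$ contributes exactly its leading term there; this bypasses the shifting lemma and the expansion entirely. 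Your argument also makes transparent why the case works: it hinges only on $\deg M$ being odd, so that the top degree of $M^{2h-1}$ is an odd exponent invisible to the square $M^{2h}$. The paper's route is more in keeping with the $\alpha_l$-calculus it uses systematically elsewhere (e.g.\ in Cases I and III), but for this particular corollary your parity observation is shorter and equally rigorous, and you correctly note that the conclusion contradicts Corollary \ref{sommaieven}(ii).
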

\begin{proof}
The previous lemma implies that $\alpha_3(U_{2h}) = \alpha_3(M^{2h}+M^{2h-1}) = \alpha_3[(x^3+x)M^{2h-1}] = \alpha_3(M^{2h-1}) + \alpha_1(M^{2h-1})$.\\
But, $M^{2h-1} = (x^3+x+1)^{2h-1} = (x^3+x)^{2h-1} + (x^3+x)^{2h-2} +\cdots$\\
The coefficient of $x^{6h-6}$ (resp. of $x^{6h-4}$) in $M^{2h-1}$ is exactly $\alpha_3(M^{2h-1})$ (resp. $\alpha_1(M^{2h-1})$).
So, $\alpha_3(M^{2h-1})=1$ and $\alpha_1(M^{2h-1})=0$.
\end{proof}

\subsection{Case III: $M \not\in {\mathcal{M}}$}
Here, we have two possibilities.

\subsubsection{III-1: the prime $p$ is such that $ord_p(2) \equiv 0 \mod 8$} \label{case-p-Fermat}
Lemmas \ref{mersennedeg8k} and \ref{ord(2)divise} imply Corollary \ref{pFermat}.
\begin{lemma} \label{mersennedeg8k}
There exists no Mersenne prime of degree multiple of $8$.
\end{lemma}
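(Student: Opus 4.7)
The plan is to apply Swan's discriminant parity theorem to a lift of $P$ to $\Z[x]$ and obtain a contradiction. Assume $P = 1 + x^a(x+1)^b$ is a Mersenne prime of degree $n = a+b$ with $8 \mid n$. First I reduce to $\gcd(a,b) = 1$: writing $d = \gcd(a,b) = 2^s t$ with $t$ odd, the identity $1 + z^d = (1 + z^t)^{2^s}$ in $\F_2[z]$ shows that $P = 1 + (x^{a/d}(x+1)^{b/d})^d$ is reducible whenever $d > 1$. Since $a+b$ is even, $\gcd(a,b)=1$ then forces $a$ and $b$ both odd.

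The crucial step is computing the discriminant $D$ of the lift $\widetilde{P} = 1 + x^a(x+1)^b \in \Z[x]$ modulo $8$. The identity $a(x+1) + bx = nx + a$ gives the clean factorization
\[
\widetilde{P}'(x) = a x^{a-1}(x+1)^b + b x^a(x+1)^{b-1} = x^{a-1}(x+1)^{b-1}(nx+a).
\]
By multiplicativity of the resultant, $\widetilde{P}(0) = \widetilde{P}(-1) = 1$ takes care of the first two factors, and the third equals $n^n \, \widetilde{P}(-a/n) = n^n - a^a b^b$ (using $a$ odd, so $(-1)^a = -1$). Since $n$ is even the sign $(-1)^{n(n-1)/2}$ equals $+1$, so $D = n^n - a^a b^b$.

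The mod-$8$ arithmetic is short: $8 \mid n$ gives $n^n \equiv 0 \pmod 8$; and for any odd $m$, $m^2 \equiv 1 \pmod 8$, so $m^m \equiv m \pmod 8$, whence $a^a b^b \equiv ab \pmod 8$. Then $a \equiv -b \pmod 8$ combined with $b^2 \equiv 1 \pmod 8$ yields $ab \equiv -1 \pmod 8$, so $D \equiv 1 \pmod 8$.

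Swan's theorem applies: $P$ is separable over $\F_2$ because $P(0) = P(1) = 1$ forces $\gcd(P, P') = 1$, and $D$ is odd. It states that the number $r$ of irreducible factors of $P$ in $\F_2[x]$ satisfies $r \equiv n \pmod 2$ if and only if $D \equiv 1 \pmod 8$. Since $D \equiv 1 \pmod 8$ and $n = 8k$ is even, $r$ must be even, contradicting $P$ irreducible $(r = 1)$. The one step requiring a spark is spotting the factorization of $\widetilde{P}'$; everything else — the resultant computation and the mod-$8$ arithmetic — is mechanical.
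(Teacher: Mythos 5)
Your proof is correct and rests on the same underlying fact as the paper's: the paper disposes of this lemma in one line by citing Corollary 3.3 of \cite{Gall-Rahav-mersenn}, which asserts that $\omega(Q)$ is even whenever $\deg Q \equiv 0 \pmod 8$, and that corollary is exactly the Stickelberger--Swan parity computation you have carried out in full (reduction to $a,b$ odd coprime, $\widetilde{P}'=x^{a-1}(x+1)^{b-1}(nx+a)$, $D=n^n-a^ab^b\equiv 1 \pmod 8$). So you have supplied, correctly, the self-contained argument that the paper outsources to a reference.
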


\begin{proof}
If $Q=1+x^{c_1}(x+1)^{c_2}$ with $c_1+c_2 = 8k$, then $\omega(Q)$ is even by \cite[Corollary 3.3]{Gall-Rahav-mersenn}. So, $Q$ is reducible.
\end{proof}

\begin{corollary} \label{pFermat}
If $ord_p(2) \equiv 0 \mod 8$, then $\sigma(M^{2h})$ is divisible by a non-Mersenne prime.
\end{corollary}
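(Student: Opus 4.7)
\textbf{Proof plan for Corollary \ref{pFermat}.} The strategy is to reduce to the prime case via Lemma \ref{p-reduction} and then combine the degree restriction from Lemma \ref{ord(2)divise} with the non-existence statement in Lemma \ref{mersennedeg8k}; no new computation is really needed.

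First I would invoke Lemma \ref{p-reduction}: since $p$ is a prime divisor of $2h+1$, the polynomial $\sigma(M^{p-1})$ divides $\sigma(M^{2h})$, so it is enough to exhibit a non-Mersenne prime divisor of $\sigma(M^{p-1})$. Because $p$ is an odd prime (as $ord_p(2)$ is assumed to be divisible by $8$, in particular nonzero), $p-1$ is even; set $h'=(p-1)/2$, so that $\sigma(M^{p-1})=\sigma(M^{2h'})$ and the setup of Section \ref{preliminaire}, together with the square-freeness granted by Lemma \ref{omegasigmM2h}, gives a factorization
$$\sigma(M^{p-1})=\prod_{j\in J}P_j$$
into distinct irreducibles, with $2h'+1=p$. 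Note that $J$ is nonempty, since $M\notin\mathcal{M}$ forces $\deg(M)\geq 1$ and hence $\sigma(M^{p-1})\neq 1$.

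Next I would apply Lemma \ref{ord(2)divise} (with $h$ replaced by $h'$): for every $j\in J$, $ord_p(2)$ divides $\deg(P_j)=a_j+b_j$. Using the hypothesis $8\mid ord_p(2)$, this yields $8\mid\deg(P_j)$ for every $j\in J$. Finally, Lemma \ref{mersennedeg8k} says that no Mersenne prime has degree divisible by $8$, so each $P_j$ must be non-Mersenne. Any one of them therefore provides a non-Mersenne prime divisor of $\sigma(M^{p-1})$, and hence of $\sigma(M^{2h})$, completing the proof.

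There is essentially no obstacle here: the work has been packaged into the earlier lemmas, and the corollary is obtained by chaining the reduction $\sigma(M^{p-1})\mid\sigma(M^{2h})$, the degree divisibility $ord_p(2)\mid\deg(P_j)$, and the absence of Mersenne primes of degree $\equiv 0\pmod 8$. If anything subtle occurs, it would be checking that $p$ is odd so that $p-1$ is an even exponent of the form $2h'$ to which the square-freeness and the $ord_p(2)$-divisibility statements apply; this is immediate from the hypothesis.
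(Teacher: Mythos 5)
Your proposal is correct and follows essentially the same route as the paper: reduce to a prime $p\mid 2h+1$ via Lemma \ref{p-reduction}, apply Lemma \ref{ord(2)divise} to get $8\mid\deg(P_j)$ for each factor, and conclude with Lemma \ref{mersennedeg8k}. The only (minor) discrepancy is that the paper phrases this as a contradiction — it first assumes every $P_j$ is a Mersenne prime, because Lemma \ref{ord(2)divise} is stated only under the standing assumption (\ref{assume}) that the factorization consists of Mersenne primes — whereas you apply that lemma to an arbitrary irreducible factorization; either adopt the contradiction framing or note that when the exponent $2h'+1=p$ is prime the degree divisibility for arbitrary irreducible factors follows from the same primitive-element argument as in Lemma \ref{PandQ}.
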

\begin{proof}
Suppose that $\displaystyle{\sigma(M^{2h}) = \prod_{j\in J} P_j}$, where each $P_j$ is a Mersenne prime. Then, Lemma \ref{ord(2)divise} implies that $ord_p(2)$ divides $\deg(P_j)$, for any $j \in J$. So, $8$ divides $\deg(P_j)$. It contradicts Lemma \ref{mersennedeg8k}.
\end{proof}
\subsubsection{III-2: $p$ is a Mersenne prime number with $p \not=7$} \label{mersnumber}
Set $p=2^m-1$, with $m$ and $p$ are both prime.
Note that there are (at present) $51$ known Mersenne prime numbers (OEIS Sequences A$000043$ and A$000668$). The first five of them are: $3,7,31, 127$ and $8191$.
\begin{lemma}
If $p \geq 31$ is a Mersenne prime number, then $\sigma(M^{p-1})$ is divisible by a non-Mersenne prime.
\end{lemma}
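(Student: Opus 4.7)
The plan is to combine Corollary \ref{anydivides} with Lemma \ref{phiandirreduc}-(iii), both of which have already been established. Since $p \geq 31$ is a Mersenne prime, I write $p = 2^m - 1$ with $m$ prime; the bound $p \geq 31$ translates to $m \geq 5$, so in particular $m \geq 4$, which is exactly the threshold at which Lemma \ref{phiandirreduc}-(iii) guarantees the existence of an irreducible polynomial $P$ of degree $m$ that is \emph{not} a Mersenne prime.

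Next I would verify that this $P$ satisfies the hypotheses of Corollary \ref{anydivides}. Since $M$ is a Mersenne prime (by hypothesis of Theorem \ref{result0}) while $P$ is not, we have $P \neq M$. Applying Corollary \ref{anydivides} then immediately yields that $P$ divides $\sigma(M^{p-1})$. Because $P$ is non-Mersenne, this furnishes the desired non-Mersenne prime divisor of $\sigma(M^{p-1})$, and since in this subcase $2h+1 = p$, Lemma \ref{p-reduction} gives that $P$ also divides $\sigma(M^{2h})$ whenever $2h+1$ is a multiple of $p$.

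There is really no substantive obstacle once Corollary \ref{anydivides} and the non-Mersenne irreducibility count of Lemma \ref{phiandirreduc}-(iii) are available; the proof reduces to a one-line pigeonhole application. The only point worth flagging is the threshold: for $p = 3$ (where $m=2$) or $p = 7$ (where $m=3$), the counting inequality $\varphi(m) < N_2(m)$ of Lemma \ref{phiandirreduc}-(ii) fails, so the argument genuinely requires $p \geq 31$. This is why the statement carves out exactly the range $p \geq 31$, and this is precisely why the cases $p \in \{3,5,7\}$ had to be handled separately in the preceding subsections.
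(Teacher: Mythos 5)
Your proposal is correct and follows essentially the same route as the paper: the paper's proof likewise combines Corollary \ref{anydivides} with Lemma \ref{phiandirreduc}-(iii), using that $p=2^m-1\geq 31$ forces $m\geq 5\geq 4$ so a non-Mersenne irreducible of degree $m$ exists and must divide $\sigma(M^{p-1})$. Your closing observation about why the threshold excludes $m\in\{2,3\}$ is accurate and consistent with the paper's separate treatment of $p\in\{3,7\}$.
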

\begin{proof}
Here, $a+b = \deg(M) \geq 5$ since $M \not\in {\mathcal{M}}$.
We get our result from Corollary \ref{anydivides} and Lemma \ref{phiandirreduc}-(iii).
\end{proof}
It remains then the case $p=3$ (since $p \not= 7$, in this section).
Lemma \ref{oldresult3} has already treated the case where $\omega(\sigma(M^{2}))=2$. So, we suppose that $\omega(\sigma(M^{2})) \geq 3$. Put
$\sigma(M^{2}) = M_1 \cdots M_r, \ r \geq 3 \text{ and } U_2=\sigma(\sigma(M^2)).$\\
We shall prove that $\alpha_3(U_2) = 1$ (Corollary \ref{corollp=3}), a contradiction to the fact that $U_2$ is a square.
Corollary \ref{reduction4} gives

\begin{lemma} \label{smalldivisors}
(i) The trinomial $1+x+x^2$ divides $\sigma(M^2)$.\\
(ii) No irreducible polynomial of degree $r \geq 3$ such that $2^r-1$ is prime, divides $\sigma(M^2)$.
\end{lemma}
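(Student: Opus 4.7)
The proof is essentially a direct consequence of the two corollaries just proved. My plan is as follows.

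For part (i), I would invoke Corollary \ref{reduction4} with the choice $p=3$. By hypothesis in this subsection, $M \notin \mathcal{M}$, so in particular $M \neq M_1$. The ``if and only if'' in Corollary \ref{reduction4} then yields that $M_1 = 1+x(x+1) = 1+x+x^2$ divides $\sigma(M^{p-1}) = \sigma(M^2)$, giving (i).

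For part (ii), let $P$ be irreducible of degree $r \geq 3$ with $2^r - 1$ prime, and suppose for contradiction $P \mid \sigma(M^2)$. Since $r \geq 3$, we have $2^r - 1 \geq 7$, so $2^r - 1 \neq 3 = p$. Corollary \ref{notdivisor} (applied with this $r$ and the current $p=3$) then forbids any such $P$ from dividing $\sigma(M^{p-1}) = \sigma(M^2)$, a contradiction.

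There is no real obstacle: the only thing to verify is the numerical inequality $2^r - 1 \neq 3$ for $r \geq 3$, which is immediate. The whole content of the lemma is just to package these two corollaries in the form in which they will be used to compute $\alpha_3(U_2)$ in the forthcoming analysis.
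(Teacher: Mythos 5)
Your proof is correct and follows essentially the same route as the paper: part (i) is Corollary \ref{reduction4} applied with $p=3$ and $M\neq M_1$ (which holds since $M\notin\mathcal{M}$ in Case III), and part (ii) is Corollary \ref{notdivisor} together with the observation that $2^r-1\geq 7\neq 3$ for $r\geq 3$. The paper's own justification cites only Corollary \ref{reduction4}, which strictly speaking covers only degrees $2$ and $3$; your explicit appeal to Corollary \ref{notdivisor} for general $r$ is the intended (and needed) complement, so if anything your write-up is slightly more complete.
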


\begin{corollary} \label{sigmaM2}
The polynomial $\sigma(M^2)$ is of the form $(1+x+x^2) B$, where $\gcd(1+x+x^2,B) = 1$ and any
prime divisor of $B$ has degree at least $4$.
\end{corollary}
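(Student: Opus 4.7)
My plan is to extract the trinomial factor and then rule out every irreducible of degree at most $3$ as a divisor of the remaining cofactor $B$. By Lemma \ref{smalldivisors}(i), $1+x+x^2$ divides $\sigma(M^2)$, and by Lemma \ref{omegasigmM2h}, $\sigma(M^{2h})$ is square-free, so we may write $\sigma(M^2) = (1+x+x^2)\,B$ with $\gcd(1+x+x^2, B) = 1$. It remains only to verify that no irreducible polynomial of degree $1$, $2$, or $3$ divides $B$.

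For degree $1$: writing $M = 1 + x^a(x+1)^b$ with $a, b \geq 1$ gives $M(0) = M(1) = 1$, hence $\sigma(M^2)(0) = 1 + M(0) + M(0)^2 = 1$ and similarly $\sigma(M^2)(1) = 1$, so neither $x$ nor $x+1$ divides $\sigma(M^2)$, and a fortiori neither divides $B$. For degree $2$: the only irreducible polynomial of degree $2$ over $\F_2$ is $1+x+x^2$ itself, which is coprime to $B$ by construction. For degree $3$: since $2^3 - 1 = 7$ is prime, Lemma \ref{smalldivisors}(ii) forbids any irreducible polynomial of degree $3$ from dividing $\sigma(M^2)$, and hence from dividing $B$.

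Combining these three exclusions, every irreducible divisor of $B$ has degree at least $4$, which proves the corollary. There is no real obstacle here: the work has already been done in Lemmas \ref{omegasigmM2h} and \ref{smalldivisors}, and the only fact not directly quoted is the evaluation $\sigma(M^2)(\epsilon) = 1$ for $\epsilon \in \{0,1\}$, which is immediate from the form of $M$.
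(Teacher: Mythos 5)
Your proof is correct and follows essentially the route the paper intends: the paper states this corollary without proof as an immediate consequence of Lemma \ref{smalldivisors}, and you supply exactly the missing routine details (square-freeness from Lemma \ref{omegasigmM2h} for the gcd condition, the evaluation $\sigma(M^2)(\epsilon)=1+M(\epsilon)+M(\epsilon)^2=1$ to exclude linear factors, uniqueness of the degree-$2$ irreducible, and Lemma \ref{smalldivisors}(ii) with $r=3$ to exclude cubics). No gaps.
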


\begin{lemma} \label{lesalphasigmaM2}
If $\sigma(M^2) = (1+x+x^2) B$ with $\gcd(1+x+x^2,B) = 1$, then
$$\left\{\begin{array}{l}
(i) \ \alpha_1(\sigma(M^2)) = \alpha_1(B)+1,\ \alpha_2(\sigma(M^2)) = \alpha_2(B)+\alpha_1(B)+1,\\
(ii) \ \alpha_3(\sigma(M^2)) = \alpha_3(B)+\alpha_2(B)+\alpha_1(B),\\
(iii) \ \alpha_3(\sigma(M^2)) = 0.
\end{array}
\right.$$
\end{lemma}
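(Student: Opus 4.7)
The plan is to prove (i) and (ii) by a direct application of Lemma~\ref{lesalfal}(i) to the factorization $\sigma(M^2) = (1+x+x^2)B$, and then to handle (iii) by reducing the computation to a single binomial coefficient modulo $2$.

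For (i) and (ii), I take $r_1=2$, $r_2=1$, $r_3=0$ and $S=B$ in Lemma~\ref{lesalfal}(i), so that $1+x+x^2 = x^{r_1}+x^{r_2}+x^{r_3}$. The range hypotheses are satisfied since $\deg(B)\geq 4$ by Corollary~\ref{sigmaM2}. Choosing the correct value of $t$ for each $l$, the formula reads
\[
\alpha_1(\sigma(M^2)) = \alpha_1(B)+\alpha_0(B), \qquad \alpha_l(\sigma(M^2)) = \alpha_l(B) + \alpha_{l-1}(B) + \alpha_{l-2}(B) \ \ (l=2,3),
\]
and using $\alpha_0(B)=1$ one immediately reads off the three stated identities.

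For (iii), the idea is to apply Lemma~\ref{alfasigmM2h} with $h=1$, which requires the inequality $3\leq a+b-1$. Since the present lemma lies in Case~III, we have $M\not\in\mathcal{M}$, and every Mersenne prime of degree at most $4$ already belongs to $\mathcal{M}=\{M_1,M_2,\overline{M_2},M_3,\overline{M_3}\}$; hence $a+b=\deg(M)\geq 5$ and the hypothesis is comfortably met. Lemma~\ref{alfasigmM2h} then gives $\alpha_3(\sigma(M^2)) = \alpha_3(M^2)$.

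To finish, I use the characteristic-two identity $M^2 = 1 + x^{2a}(x+1)^{2b}$. Since $\deg(M^2)=2(a+b)$, the coefficient $\alpha_3(M^2)$ equals the coefficient of $x^{2(a+b)-3}$ in $x^{2a}(x+1)^{2b}$, i.e.\ the coefficient of $x^{2b-3}$ in $(x+1)^{2b}$, which is $\binom{2b}{3}\bmod 2$. Lucas' theorem disposes of this at once: the last binary digit of $2b$ is $0$ while that of $3$ is $1$, so $\binom{2b}{3}\equiv 0\pmod 2$. There is no serious obstacle; the only point that needs care is the degree bound $a+b\geq 5$ that legitimizes the use of Lemma~\ref{alfasigmM2h}.
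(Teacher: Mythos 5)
Your proof is correct and follows essentially the same route as the paper: parts (i)--(ii) are the same direct application of Lemma~\ref{lesalfal}(i), and for (iii) both arguments reduce $\alpha_3(\sigma(M^2))$ to the coefficient of $x^{2a+2b-3}$ in $x^{2a}(x+1)^{2b}$ using $\deg(x^a(x+1)^b)=a+b<2a+2b-3$. The only (immaterial) difference is the last step, where the paper observes that $x^{2a}(x+1)^{2b}$ is a square and hence has no odd-degree terms, while you compute $\binom{2b}{3}\equiv 0 \pmod 2$ via Lucas' theorem.
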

\begin{proof}
We directly get (i) and (ii). For (iii), $\sigma(M^2)=1+M+M^2=x^{2a}(x+1)^{2b} + x^a(x+1)^b+1$. Moreover,
$2a+2b-3 > a+b$ because $a+b \geq 4$ and $x^{2a}(x+1)^{2b}$ is a square. So, $\alpha_3(\sigma(M^2))=\alpha_3(x^{2a}(x+1)^{2b})=0$.
\end{proof}
\begin{lemma} \label{lesalphaW}
Some coefficients of $U_2$ and $B$ satisfy:
$$\alpha_1(U_2) = \alpha_1(B)+1,\ \alpha_2(U_2)=\alpha_2(B) + \alpha_1(B),\ \alpha_3(U_2)=\alpha_3(B) + \alpha_2(B).$$
\end{lemma}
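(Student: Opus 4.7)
The plan is to use multiplicativity of $\sigma$ together with the coprimality assumption: since $\gcd(1+x+x^2, B) = 1$, we get
$$U_2 = \sigma(\sigma(M^2)) = \sigma((1+x+x^2)B) = \sigma(1+x+x^2)\,\sigma(B) = (x^2+x)\,\sigma(B),$$
because $\sigma(1+x+x^2) = 1 + (1+x+x^2) = x^2+x$. So the whole lemma reduces to reading off the first three low-degree coefficients of $(x^2+x)\sigma(B)$ in terms of those of $B$.

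Next, I would pass from $\sigma(B)$ back to $B$ using Lemma \ref{lesalfal}-(ii). By Corollary \ref{sigmaM2}, every prime divisor of $B$ has degree at least $4$, so $\sigma(B) = B + T$ with $\deg(T) \leq \deg(B) - 4$. Applied with $r = 3$, this yields
$$\alpha_l(\sigma(B)) = \alpha_l(B) \quad \text{for } l \in \{0,1,2,3\},$$
and in particular $\alpha_0(\sigma(B)) = \alpha_0(B) = 1$.

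Then I would apply Lemma \ref{lesalfal}-(i) to $(x^2+x)\sigma(B)$, with $r_1 = 2,\ r_2 = 1$, giving for every $l \geq 1$
$$\alpha_l(U_2) = \alpha_l((x^2+x)\sigma(B)) = \alpha_l(\sigma(B)) + \alpha_{l-1}(\sigma(B)).$$
Combining this with the previous step for $l = 1, 2, 3$ yields
$$\alpha_1(U_2) = \alpha_1(B) + \alpha_0(B) = \alpha_1(B) + 1,$$
$$\alpha_2(U_2) = \alpha_2(B) + \alpha_1(B), \qquad \alpha_3(U_2) = \alpha_3(B) + \alpha_2(B),$$
which is exactly the statement.

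There is no real obstacle here: everything follows formally from multiplicativity of $\sigma$, the explicit formula $\sigma(1+x+x^2) = x^2+x$, and the two parts of Lemma \ref{lesalfal}. The only point that needs to be checked carefully is that $\alpha_l(\sigma(B)) = \alpha_l(B)$ is valid up to $l = 3$, which is guaranteed because Corollary \ref{sigmaM2} forces every prime factor of $B$ to have degree $\geq 4$, making Lemma \ref{lesalfal}-(ii) applicable with $r = 3$.
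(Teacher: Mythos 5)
Your proposal is correct and follows essentially the same route as the paper: factor $U_2=(x^2+x)\sigma(B)$ by multiplicativity, use the fact that every prime divisor of $B$ has degree at least $4$ to get $\alpha_l(\sigma(B))=\alpha_l(B)$ for $l\leq 3$, and then expand via Lemma \ref{lesalfal}-(i). Your explicit note that $\alpha_0(\sigma(B))=\alpha_0(B)=1$ just makes transparent a step the paper leaves implicit.
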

\begin{proof}
Corollary \ref{sigmaM2} implies that $U_2=\sigma(\sigma(M^2))=\sigma((1+x+x^2) B)=\sigma(1+x+x^2)\sigma(B)=(x^2+x) \sigma(B)$.
Any irreducible divisor of $B$ has degree more than $3$. Hence, $\alpha_l(\sigma(B)) = \alpha_l(B),$ for $1\leq l \leq 3$. \\
One gets:
$\left\{\begin{array}{l}
\alpha_1(U_2)=\alpha_1(\sigma(B))+1=\alpha_1(B)+1,\\
\alpha_2(U_2)=\alpha_2(\sigma(B))+\alpha_1(\sigma(B)) = \alpha_2(B)+\alpha_1(B),\\
\text{$\alpha_3(U_2)=\alpha_3(\sigma(B))+\alpha_2(\sigma(B)) = \alpha_3(B) + \alpha_2(B)$.}
\end{array}
\right.$
\end{proof}

\begin{corollary} \label{corollp=3}
The coefficient $\alpha_3(U_2)$ equals $1$.
\end{corollary}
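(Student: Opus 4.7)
The plan is to combine the two computational lemmas (Lemmas \ref{lesalphasigmaM2} and \ref{lesalphaW}) with the square-property of $U_2$ supplied by Corollary \ref{sommaieven}(ii). First I would chain together parts (ii) and (iii) of Lemma \ref{lesalphasigmaM2} to obtain
$$\alpha_3(B) + \alpha_2(B) + \alpha_1(B) = \alpha_3(\sigma(M^2)) = 0,$$
i.e.\ $\alpha_3(B) + \alpha_2(B) = \alpha_1(B)$. Plugging this into the expression $\alpha_3(U_2) = \alpha_3(B) + \alpha_2(B)$ from Lemma \ref{lesalphaW}, the desired equality $\alpha_3(U_2) = 1$ reduces at once to the single claim $\alpha_1(B) = 1$.

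To close that gap I would exploit the structural fact that $U_2$ is a square in $\F_2[x]$ (Corollary \ref{sommaieven}(ii)): over $\F_2$, squaring is additive and monomial, so a square contains only even-degree monomials. Consequently the coefficient of $x^{\deg(U_2)-1}$ in $U_2$ must vanish, which by definition means $\alpha_1(U_2) = 0$. Combined with the identity $\alpha_1(U_2) = \alpha_1(B) + 1$ from Lemma \ref{lesalphaW}, this forces $\alpha_1(B) = 1$, and hence $\alpha_3(U_2) = 1$.

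I do not foresee a genuine obstacle here: once the two previous lemmas are on the table, the argument is a three-line linear-algebra exercise in the $\alpha_l$'s. The only subtle point — and the conceptual heart of the step — is recognizing that the purely structural hypothesis from Corollary \ref{sommaieven}(ii), namely that $U_2$ splits into an even number of $x$'s and an even number of $(x+1)$'s, translates in characteristic $2$ into the very concrete vanishing $\alpha_1(U_2) = 0$ which powers the whole reduction.
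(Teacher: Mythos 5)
Your proposal is correct and follows essentially the same route as the paper: the paper likewise uses that $U_2$ is a square to get $\alpha_1(U_2)=0$, hence $\alpha_1(B)=1$ via Lemma \ref{lesalphaW}, and then combines $\alpha_3(\sigma(M^2))=0$ from Lemma \ref{lesalphasigmaM2}-(iii) with $\alpha_3(U_2)=\alpha_3(B)+\alpha_2(B)$ to conclude. The only difference is the order in which the two linear relations are chained, which is immaterial.
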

\begin{proof}
The polynomial $U_2$ is a square, so $0=\alpha_1(U_2)=\alpha_1(B)+1$ and thus $\alpha_1(B)=1$.
Lemma \ref{lesalphasigmaM2}-(iii) implies that $0=\alpha_3(\sigma(M^2)) = \alpha_3(B)+\alpha_2(B)+\alpha_1(B)$. Therefore, $\alpha_3(U_2)= \alpha_3(B)+\alpha_2(B) = \alpha_1(B)=1.$
\end{proof}

\begin{remark}
\emph{Our method fails for  $p=7$. Indeed, for many $M$, one has $\alpha_3(U_6) = \alpha_5(U_6) =0$. So, we do not reach a contradiction. We should find a large enough odd integer $l$ such that, $\alpha_l(U_6) =0$. But, this does not appear always possible.}
\end{remark}

\def\biblio{\def\titrebibliographie{References}\thebibliography}
\let\endbiblio=\endthebibliography




\newbox\auteurbox
\newbox\titrebox
\newbox\titrelbox
\newbox\editeurbox
\newbox\anneebox
\newbox\anneelbox
\newbox\journalbox
\newbox\volumebox
\newbox\pagesbox
\newbox\diversbox
\newbox\collectionbox
\def\fabriquebox#1#2{\par\egroup
\setbox#1=\vbox\bgroup \leftskip=0pt \hsize=\maxdimen \noindent#2}
\def\bibref#1{\bibitem{#1}


\setbox0=\vbox\bgroup}
\def\auteur{\fabriquebox\auteurbox\styleauteur}
\def\titre{\fabriquebox\titrebox\styletitre}
\def\titrelivre{\fabriquebox\titrelbox\styletitrelivre}
\def\editeur{\fabriquebox\editeurbox\styleediteur}

\def\journal{\fabriquebox\journalbox\stylejournal}

\def\volume{\fabriquebox\volumebox\stylevolume}
\def\collection{\fabriquebox\collectionbox\stylecollection}
{\catcode`\- =\active\gdef\annee{\fabriquebox\anneebox\catcode`\-
=\active\def -{\hbox{\rm
\string-\string-}}\styleannee\ignorespaces}}
{\catcode`\-
=\active\gdef\anneelivre{\fabriquebox\anneelbox\catcode`\-=
\active\def-{\hbox{\rm \string-\string-}}\styleanneelivre}}
{\catcode`\-=\active\gdef\pages{\fabriquebox\pagesbox\catcode`\-
=\active\def -{\hbox{\rm\string-\string-}}\stylepages}}
{\catcode`\-
=\active\gdef\divers{\fabriquebox\diversbox\catcode`\-=\active
\def-{\hbox{\rm\string-\string-}}\rm}}
\def\ajoutref#1{\setbox0=\vbox{\unvbox#1\global\setbox1=
\lastbox}\unhbox1 \unskip\unskip\unpenalty}
\newif\ifpreviousitem
\global\previousitemfalse
\def\separateur{\ifpreviousitem {,\ }\fi}
\def\voidallboxes
{\setbox0=\box\auteurbox \setbox0=\box\titrebox
\setbox0=\box\titrelbox \setbox0=\box\editeurbox
\setbox0=\box\anneebox \setbox0=\box\anneelbox
\setbox0=\box\journalbox \setbox0=\box\volumebox
\setbox0=\box\pagesbox \setbox0=\box\diversbox
\setbox0=\box\collectionbox \setbox0=\null}
\def\fabriquelivre
{\ifdim\ht\auteurbox>0pt
\ajoutref\auteurbox\global\previousitemtrue\fi
\ifdim\ht\titrelbox>0pt
\separateur\ajoutref\titrelbox\global\previousitemtrue\fi
\ifdim\ht\collectionbox>0pt
\separateur\ajoutref\collectionbox\global\previousitemtrue\fi
\ifdim\ht\editeurbox>0pt
\separateur\ajoutref\editeurbox\global\previousitemtrue\fi
\ifdim\ht\anneelbox>0pt \separateur \ajoutref\anneelbox
\fi\global\previousitemfalse}
\def\fabriquearticle
{\ifdim\ht\auteurbox>0pt        \ajoutref\auteurbox
\global\previousitemtrue\fi \ifdim\ht\titrebox>0pt
\separateur\ajoutref\titrebox\global\previousitemtrue\fi
\ifdim\ht\titrelbox>0pt \separateur{\rm in}\
\ajoutref\titrelbox\global \previousitemtrue\fi
\ifdim\ht\journalbox>0pt \separateur
\ajoutref\journalbox\global\previousitemtrue\fi
\ifdim\ht\volumebox>0pt \ \ajoutref\volumebox\fi
\ifdim\ht\anneebox>0pt  \ {\rm(}\ajoutref\anneebox \rm)\fi
\ifdim\ht\pagesbox>0pt
\separateur\ajoutref\pagesbox\fi\global\previousitemfalse}
\def\fabriquedivers
{\ifdim\ht\auteurbox>0pt
\ajoutref\auteurbox\global\previousitemtrue\fi
\ifdim\ht\diversbox>0pt \separateur\ajoutref\diversbox\fi}
\def\endbibref
{\egroup \ifdim\ht\journalbox>0pt \fabriquearticle
\else\ifdim\ht\editeurbox>0pt \fabriquelivre
\else\ifdim\ht\diversbox>0pt \fabriquedivers \fi\fi\fi.\voidallboxes}

\let\styleauteur=\sc
\let\styletitre=\it
\let\styletitrelivre=\sl
\let\stylejournal=\rm
\let\stylevolume=\bf
\let\styleannee=\rm
\let\stylepages=\rm
\let\stylecollection=\rm
\let\styleediteur=\rm
\let\styleanneelivre=\rm

\begin{biblio}{99}

\begin{bibref}{Beard2}
\auteur{J. T. B. Beard Jr}  \titre{Perfect polynomials revisited}
\journal{Publ. Math. Debrecen} \volume{38/(1-2)} \pages 5-12 \annee
1991
\end{bibref}

\begin{bibref}{BeardU}
\auteur{J. T. B. Beard Jr} \titre{Unitary perfect polynomials over
$GF(q)$} \journal{Rend. Accad. Lincei} \volume{62} \pages 417-422
\annee 1977
\end{bibref}

\begin{bibref}{BeardU2}
\auteur{J. T. B. Beard Jr, A. T. Bullock, M. S. Harbin}
\titre{Infinitely many perfect and unitary perfect polynomials}
\journal{Rend. Accad. Lincei} \volume{63} \pages 294-303 \annee 1977
\end{bibref}

\begin{bibref}{Beard}
\auteur{J. T. B. Beard Jr, J. R. Oconnell Jr, K. I. West}
\titre{Perfect polynomials over $GF(q)$} \journal{Rend. Accad.
Lincei} \volume{62} \pages 283-291 \annee 1977
\end{bibref}

\begin{bibref}{Canaday}
\auteur{E. F. Canaday} \titre{The sum of the divisors of a
polynomial} \journal{Duke Math. J.} \volume{8} \pages 721-737 \annee
1941
\end{bibref}

\begin{bibref}{Gall-Rahav5}
\auteur{L. H. Gallardo, O. Rahavandrainy} \titre{Even perfect
polynomials over $\F_2$ with four prime factors} \journal{Intern. J.
of Pure and Applied Math.} \volume{52(2)} \pages 301-314 \annee 2009
\end{bibref}

\begin{bibref}{Gall-Rahav8}
\auteur{L. H. Gallardo, O. Rahavandrainy} \titre{All perfect
polynomials with up to four prime factors over $\F_4$}
\journal{Math. Commun.} \volume{14(1)} \pages 47-65 \annee 2009
\end{bibref}

\begin{bibref}{Gall-Rahav12}
\auteur{L. H. Gallardo, O. Rahavandrainy} \titre{On even (unitary) perfect
polynomials over $\F_{2}$ } \journal{Finite Fields Appl.} \volume{18} \pages 920-932 \annee 2012
\end{bibref}

\begin{bibref}{Gall-Rahav13}
\auteur{L. H. Gallardo, O. Rahavandrainy} \titre{Characterization of Sporadic perfect
polynomials over $\F_{2}$ } \journal{Functiones et Approx.} \volume{55(1)} \pages 7-21 \annee 2016
\end{bibref}

\begin{bibref}{Gall-Rahav-mersenn}
\auteur{L. H. Gallardo, O. Rahavandrainy} \titre{On Mersenne
polynomials over $\F_{2}$} \journal{Finite Fields Appl.}
\volume{59} \pages 284-296 \annee 2019
\end{bibref}

\begin{bibref}{Rudolf}
\auteur{R. Lidl, H. Niederreiter} \titrelivre{Finite Fields,
Encyclopedia of Mathematics and its applications} \editeur{Cambridge
University Press} \anneelivre 1983 (Reprinted 1987)
\end{bibref}

\begin{bibref}{Rahav}
\auteur{O. Rahavandrainy} \titre{Familles de polyn\^omes unitairement parfaits sur $\F_2$} \journal{C. R. Math. Acad. Sci. Paris}
\volume{359(2)} \pages 123-130 \annee 2021
\end{bibref}

\end{biblio}

\end{document}